\theoremstyle{thmstyleone}%
\newtheorem{theorem}{Theorem}[section]
\newtheorem{example}{Example}[section]%
\newtheorem{lemma}{Lemma}[section]
\newtheorem{remark}{Remark}[section]
\begin{document}

\title[Article Title]{Adjoint-based A Posteriori Error Analysis for Semi-explicit Index-1 and Hessenberg Index-2 Differential-Algebraic Equations}

%%=============================================================%%
%% GivenName	-> \fnm{Joergen W.}
%% Particle	-> \spfx{van der} -> surname prefix
%% FamilyName	-> \sur{Ploeg}
%% Suffix	-> \sfx{IV}
%% \author*[1,2]{\fnm{Joergen W.} \spfx{van der} \sur{Ploeg} 
%%  \sfx{IV}}\email{iauthor@gmail.com}
%%=============================================================%%

\author*[1]{\fnm{Jehanzeb H.} \sur{Chaudhry}}\email{jehanzeb@unm.edu}
\equalcont{These authors contributed equally to this work.}
\author[1]{\fnm{Owen L.} \sur{Lewis}}\email{owenlewis@unm.edu}
\equalcont{These authors contributed equally to this work.}
\author[1]{\fnm{Md Al Amin} \sur{Molla}}\email{alaminmolla@unm.edu}
\equalcont{These authors contributed equally to this work.}

\affil[1]{\orgdiv{Department of Mathematics \& Statistics}, \orgname{University of New Mexico}, \orgaddress{%\street{Street}, 
\city{Albuquerque}, \postcode{87131}, \state{New Mexico}, \country{USA}}}

% \affil[2]{\orgdiv{Department}, \orgname{Organization}, \orgaddress{\street{Street}, \city{City}, \postcode{10587}, \state{State}, \country{Country}}}

% \affil[3]{\orgdiv{Department}, \orgname{Organization}, \orgaddress{\street{Street}, \city{City}, \postcode{610101}, \state{State}, \country{Country}}}

%%==================================%%
%% Sample for unstructured abstract %%
%%==================================%%

\abstract{In this work we develop  adjoint-based analyses for \textit{a posteriori} error estimation for the temporal discretization of differential-algebraic equations (DAEs) of special type: semi-explicit index-1 and Hessenberg index-2. Our technique quantifies the error in a Quantity of Interest (QoI), which is defined as a bounded linear functional of the solution of a DAE. We derive representations for errors of various types of QoIs (depending on the entire time interval, final time, algebraic variables, differential variables, etc.).  We develop two analyses: one that defines the adjoint to the DAE system, and one that first converts the DAE to an ODE system and then applies classical \textit{a posteriori} analysis techniques. 
A number of examples are presented, including nonlinear and non-autonomous DAEs, as well as spatially discretized partial differential-algebraic equations (PDAEs). Numerical results indicate a high degree of accuracy in the error estimation.}

% \keywords{keyword1, Keyword2, Keyword3, Keyword4}

%%\pacs[JEL Classification]{D8, H51}

%%\pacs[MSC Classification]{35A01, 65L10, 65L12, 65L20, 65L70}

\maketitle

%Commenting aids
% \newcommand{\todo}[1]{{\color{blue}$\blacksquare$~\textsf{#1}}}
\newcommand{\jhc}[1]{\textcolor{magenta} {JHC: #1}}
\newcommand{\md}[1]{\textcolor{red} {Md: #1}}

%%% Macros

%QoIs
\newcommand{\QCI}[2]{\ensuremath{\mathcal{Q}_{\left[0,T\right]}(#1,#2)}}
\newcommand{\QFT}[2]{\ensuremath{\mathcal{Q}_{T}(#1,#2)}}

%Error in QoIs
\newcommand{\eQCI}{\ensuremath{e^{\mathcal{Q}_{\left[0,T\right]}}}}
\newcommand{\eQFT}{\ensuremath{e^{\mathcal{Q}_{T}}}}

\section{Introduction}
        Differential-algebraic equations (DAEs) are a broad class of  mathematical models that arise in a wide range of scientific and engineering applications where constraints are intrinsic to the system under consideration. Such systems arise in  optimization, model simplification, optimal control, uncertainty quantification, and in modeling numerous physical phenomena. 
        Examples include electrical circuits, combustion, and ion flow \cite{Chua65,Lutz1985, profowensmodel,adjointsensitivitypetzoldae,nsivpdaepetzol,solvingodestiffanddae}. 
        DAEs are a generalization of ordinary differential equations (ODEs).
        Unlike ODEs, DAEs combine both differential dynamics and algebraic constraints, which introduce unique challenges for their analysis and numerical simulation.
        In this article we focus on DAEs containing only ordinary derivatives, as opposed to partial derivatives.
        However, discretization of PDEs with constrains often leads to DAEs such as those we consider in this work \cite{nsivpdaepetzol}.  
        
        %Partial differential equations (PDEs) are not quite the same but discretized PDEs can create DAEs \cite{nsivpdaepetzol}.

        Numerical techniques for solving DAEs must handle complexities  like stiffness and nonlinearity inherent to ODEs, but also  the additional difficulties introduced by the algebraic structure, index, hidden constraints and consistency of initial conditions. The index of a DAE, which represents the complexity of its constraint structure, plays an important role in determining the appropriate numerical method. 
        %%%????%%%In practice, the simulation of DAEs efficiently is critical, especially in the cases for control, optimization, and decision-making.
        % \item Common techniques\\
        % \todo{We already discussed this in section 2.3}
        % \item Error occurs\\
        There are a number of techniques for the numerical solution of DAEs (see \cref{subsec:ns} and the references therein).
        
        Solving DAEs numerically invariably involves error in the computed solution. This error in the computed solution must be quantified if DAEs are to be used in a robust and reliable manner in science and engineering applications. In this article we develop techniques to accurately quantify the error in a numerically computed Quantity-of-Interest (QoI) for DAEs.
        %exhibits error in the DAEs solution due to discretization, imposing inconsistent initial condition, long-term simulation, and  inadequate solver tolerances. 
        %There are many cases where scientists and engineers are interested in quantifying the error in a bounded linear functional of the solution to DAEs, the so called  quantity of interest (QoI). 
        Adjoint based \textit{a posteriori} error estimation plays a key role in accurate estimation of the error in the computed value of a QoI~\cite{ODE_Eriksson_Estep_Hansbo_Johnson_1995,Prudhomme1999,becker2001optimal,Giles2002}. Adjoint based methods and analyses are used in numerous other applications, e.g.,   sensitivity analysis \cite{adjointsensitivitypetzoldae,cheng2020surrogate,serban2005cvodes,CAO2002171}, optimization \cite{hammond2021photonic,zhang2024physics,su2023kinetics}, optimal control \cite{manzoni2021optimal,gerdts2024optimal}, large-scale DAE solver: SUNDIALS \cite{hindmarsh2005sundials,gardner2022enabling}, neural network \cite{kharazmi2021hp,haber2017stable,tahersima2019deep}, and uncertainty quantification \cite{MITRA20199}.

    Although a well-developed theoretical framework for adjoint based \textit{a posteriori} error estimation is available for ODEs as well as certain classes of partial differential equations (PDEs) \cite{ODE_Eriksson_Estep_Hansbo_Johnson_1995,petzolode,ainsworth1997posteriori,bangerth2003adaptive,barth2004posteriori,becker2001optimal,carey2009posteriori,chaudhry2015posteriori,chaudhry2021posteriori,chaudhry2018posteriori,chaudhry2017posteriori,chaudhry2015posterioriIMEX,chaudhry2016posteriori,chaudhry2013posteriori,chaudhry2019posteriori,collins2015posteriori,collins2014posteriori,estep1995posteriori,estep2009error,estep2012posteriori,estep2002accounting,estep2000estimating,johansson2015adaptive,chaudhry2021error}, no such analysis has been carried out for DAEs to the best of our knowledge.  However, DAEs have been analyzed in other contexts of error estimation and adjoint analysis. For example, error estimation for  collocation solutions to  linear index-1 DAEs has been studied in \cite{defectauzinger2007defect,defectauzinger2009defect}, while  local error control has been investigated in \cite{errorlocalcontrolsieber2005local}. Various \textit{a priori} error analyses have been carried out in  \cite{errorpetzolhindmarsh1995algorithms,errorRKhairer1988error,errorIMEXboscarino2007error,errorQRlinh2011qr,errornsdaeindex2arnold1995errors,errorDAEOSTIGOVosti_5678419,erroriterateddefectkarasozen1996error,marz2000unified}.
    The adjoint system for index-1 DAEs has been analyzed in  \cite{balla2000linear}. 
    %It was shown that the original DAE and its associated adjoint system have the same index \cite{marz2000unified}. 
    %(\todo{need to update this citation bibtex}).     %errorlocalcontrolsieber2005local,errorinreducedbasismethodlee1991estimation,errorpiecewisecollocationmethoddelayintegroteimoori2025error,errorBDFcomputedFastmultiratemethodverhoeven2006error
    Adjoint based sensitivity analysis for DAEs has been carried out in \cite{adjointsensitivitypetzoldae}.
    
    %In the case of quantifying the error in quantities of interest using adjoint method, depending on the specific form of quantities of interest, the adjoint system can exhibit nonhomogeneous behavior subject to varying initial conditions \cite{adjointsensitivitypetzoldae} (\todo{taken from last para: page 1077}). For DAEs of Hessenberg form with index up to two, the corresponding adjoint system has been formulated with conditions ensuring its consistent initialization by Cao, Petzold, and Serban \cite{adjointsensitivitypetzoldae} along with showing the stability of the adjoint system for semi-explicit DAEs.
    
    % We are interested in two types of quantities of interest called quantity of interest cumulative in time, and quantity of interest at terminal time.

    %     We do the adjoint-based error analysis in these two quantities of interest for DAEs of index up to two (Hessenberg). For a DAEs or ODEs the associated adjoint system is a linear system. Adjoint method is used in sensitivity analysis \cite{adjointsensitivitypetzoldae,cheng2020surrogate,serban2005cvodes,CAO2002171}, optimization \cite{hammond2021photonic,zhang2024physics,su2023kinetics}, optimal control \cite{manzoni2021optimal,gerdts2024optimal}, large-scale DAE solver: SUNDIALS \cite{hindmarsh2005sundials,gardner2022enabling}, neural network \cite{kharazmi2021hp,haber2017stable,tahersima2019deep}, uncertainty quantification \cite{MITRA20199}, and many other aspects.

    In this article we carry out a thorough adjoint based error analysis for semi-explicit index-1 and Hessenberg index-2 DAEs. In particular, this involves defining the associated adjoint DAE with consistent initialization depending on the structure of the QoI,  and using the adjoint solution to quantify the error in  the computed QoI. The theoretical results in this article derive accurate error estimates for various QoIs ( e.g. depending on final time, average over time intervals, involving either differential or algebraic variables, or both). 
    We develop two error analyses in this article. The first approach is based on defining the adjoint directly to the DAE system, and requires careful setting of the adjoint initial conditions. The second approach defines an adjoint problem to the ODE corresponding to the DAE, which in turn allows the use of classical error analyses techniques. The accuracy of the resulting error estimates is demonstrated through a number of numerical    examples including nonlinear, non-autonomous DAEs as well as a semi-discretized PDE.

    The remainder of this paper is organized as follows: In \cref{sec:background} we discuss  two special types of DAEs;  semi-explicit index-1 and Hessenberg index-2 DAEs. Further we  discuss their numerical treatment and also introduce quantities of interest in this section. In \cref{sec:adj_error_analysis_sec_adj_dae,sec:adj_error_analysis_sec_adj_ode} we develop two distinct analyses to quantify the error in the QoI for DAEs of type semi-explicit index-1 and Hessenberg index-2. The first technique, called Adjoint DAE, is  based on forming an adjoint to the DAE system and is presented in \cref{sec:adj_error_analysis_sec_adj_dae}. The second technique, called Adjoint ODE and discussed in \cref{sec:adj_error_analysis_sec_adj_ode}, first forms an ODE system corresponding to the DAE and then utilizes the adjoint to the ODE system. In \cref{sec:err_estimates} we discuss implementation aspects and formation of error estimates.    
    %and display 
    We show numerical results  for several examples, including nonlinear, non-autonomous DAEs, and a system of partial differential-algebraic equations,
    in \cref{sec:numerical_results}.
    %that validate our theoretical findings in \cref{subsec:ns_adj_dae_and_ode}. 
    In \cref{sec:conclusion} we give a summary of our contributions with potential directions for future research.

% the content of .bbl file can be included here once bbl is generated
%%\input sn-article.bbl

\section{Quantities of Interest in Differential Algebraic Equations}\label{sec:background}
%\subsection{Standard form ODE}

This sections gives a brief background on  semi-explicit index-1 and Hessenberg index-2 Differential-Algebraic Equations (DAEs), their numerical approximation, and introduces two quantities of interest (QoIs) that are the focus of error estimation of this article.

\subsection{Semi-explicit DAEs}
Consider the semi-explicit DAE, 
\begin{subequations}\label{eq:dae_equation}
\begin{align}    
    \dot{y} &= f(y, z, t), \label{eq:dae_diff}\\
    0 &= g(y, z, t) \label{eq:dae_alg},    
\end{align}
\end{subequations}
 where $y(t) \in \mathbb{R}^n$ and $z(t) \in \mathbb{R}^m$ are called the differential and algebraic variables respectively. This form of a DAE, which is an ODE with constraints, arises in numerous science and engineering applications~\cite{nsivpdaepetzol}. Clearly, the initial conditions $y_0 = y(0),z_0=z(0)$ must satisfy the condition that 
\begin{equation}
\label{eq:DAE_IC_I}
 g(y_0,z_0,0) = 0.
 \end{equation}
That is to say, that the initial conditions must satisfy the algebraic constraint in order to be ``consistent.''

The index of a DAE is an important property of the problem that plays a role in classification and has many consequences for the behavior of solutions \cite{nsivpdaepetzol}. There are, in fact, several distinct, but related, definitions of the index of a DAE. 
Here we restrict ourselves to the discussion of the ``differential index.''
For a DAE of the form given by \cref{eq:dae_equation}, the differential index  is the minimum number of times the algebraic  constraint equations needs to be differentiated in order to obtain differential equations for all of the algebraic variables~\cite{Atkinson}. Throughout this article we assume that the DAE~\cref{eq:dae_equation} is either index-1 or index-2.

\subsubsection{Semi-explicit index-1 DAE}\label{subsec:index_1_subsec}
%\justifying
Consider a semi-explicit DAE of the form \cref{eq:dae_equation}. Differentiating the algebraic constraint \cref{eq:dae_alg} with respect to $t$ and formally solving for $\dot{z}$ gives
\begin{align}\label{eq:index_1_zdot}
    \dot{z} &= -[g_{z}]^{-1}\left [g_{y}f + g_{t}\right],
\end{align}
where $g_y = \partial g / \partial y  \in \mathbb{R}^{m\times n}$ is the Jacobian matrix of the function $g$ with respect to $y$ and the $ij$-th component of $g_y$ is: $(g_y)_{ij}=\frac{\partial g_i}{\partial y_j}$ for $i=1,2,\dots,m$, and $j=1,2,\dots,n$. Similarly, $g_z\in \mathbb{R}^{m\times m}$ is the Jacobian of $g$ with respect to $z$ and $g_t=\frac{\partial g}{\partial t}$.
It follows from \cref{eq:index_1_zdot}   that the invertibility of the Jacobian matrix $g_{z}$ in a neighborhood of the solution of \cref{eq:dae_equation} is required to yield explicit differential equations corresponding to the algebraic variables $z(t)$ \cite{solvingodestiffanddae}. 
Indeed, the DAE \cref{eq:dae_equation} is index-1 if and only if $g_{z}$ is invertible \cite{nsivpdaepetzol}.

\subsubsection{Hessenberg (Pure) index-2 DAE}\label{subsec:hessenberg_subsec}
%\justifying
We now consider a common class of DAEs for which the index is greater than one. 
Consider the semi-explicit DAE in case that the algebraic constraint \cref{eq:dae_alg} does not explicitly depend on the algebraic variable $z$. In this case, the DAE may be written,
\begin{subequations}\label{eq:index_2_dae_eq}
\begin{align}
    \dot{y} &= f(y, z, t), \label{eq:index_2_dae_diff}\\
    0 &= g(y, t). \label{eq:index_2_dae_alg}
\end{align}
\end{subequations}
Taking the  derivative of \cref{eq:index_2_dae_alg} with respect to $t$ yields the  so-called hidden constraint,
    \begin{align}\label{eq:hidden_constraint}
        0 = g_yf + g_t.
    \end{align}
Now, in the event that $g_yf_z\in \mathbb{R}^{m\times m}$ is invertible, the ODE \cref{eq:index_2_dae_diff} together with the hidden constraint \cref{eq:hidden_constraint} form a index-1 DAE \cite{solvingodestiffanddae}.
    % satisfying the invertibility of $g_y(y,t)f_z(y,z,t)$ in a neighborhood of the solution \cite{solvingodestiffanddae}. 
To see this, we differentiate  the hidden constraint \cref{eq:hidden_constraint} with respect to time and formally solve for $\dot z$ to yield the differential equations corresponding to the algebraic variables,
    
    \begin{align}\label{eq:zdot_for_index_2}
        \dot{z} = -\left(g_yf_z\right)^{-1}\displaystyle\left(f^Tg_{yy}f +  g_yf_yf + 2g_{yt}f + g_yf_t + g_{tt}\right),
    \end{align}
here, $g_{yy}$ is the derivative of the Jacobian matrix $g_y$ with respect to $y$, a third order tensor of dimension $m\times n\times n$ with components given by
    \begin{align*}
     \left(g_{yy} \right) _{ijk} = \frac{\partial^2g_i}{\partial y_j\partial y_k}, \quad \text{ for } i=1,2,\dots,m, \, j=1,2,\dots,n, \text{ and } k=1,2,\dots,n.
    \end{align*}
For a fixed $i$, each slice of this tensor represents a Hessian matrix. 
Because $f\in \mathbb{R}^n$, $f^Tg_{yy}f$ has components
\[
\left( f^T g_{yy} f \right)_i = \sum_{j=1}^{n} \sum_{k=1}^{n} \frac{\partial^2 g_i}{\partial y_j \partial y_k} f_j f_k,\quad \text{ for }  i=1,2,\dots, m.
\]
% so $f^Tg_{yy}f\in \mathbb{R}^m$. As the Jacobian $f_y\in \mathbb{R}^{n\times n}$ so, $g_yf_yf\in \mathbb{R}^m$. The $ij$-th component of $g_{yt}\in \mathbb{R}^{m\times n}$ is represented by $(g_{yt})_{ij}=\frac{\partial^2 g_i}{\partial y_j \partial t}$ and $g_{yt}f\in \mathbb{R}^m$, similarly $g_yf_t=\frac{\partial g}{\partial y}\frac{\partial f}{\partial t}\in \mathbb{R}^m$, and $g_{tt}=\frac{\partial^2 g}{\partial t^2}\in \mathbb{R}^m$.
The DAE \cref{eq:index_2_dae_eq} with $g_yf_z$ nonsingular in a neighborhood of the solution is called Hessenberg (Pure) index-2 DAE \cite{nsivpdaepetzol}. 
This is called a pure index-2 because all the algebraic variables (components of $z$) are of index-2. 
The initial conditions $(y_0 = y(0),z_0=z(0)$ must now satisfy
\begin{equation} 
\label{eq:DAE_IC_II}
0 = g(y_0,0), \quad \text{ and } \quad 0 = g_y(y_0,0)f(y_0,z_0,0)+g_t(y_0,0),
\end{equation}
in order to be consistent \cite{solvingodestiffanddae}.

\subsection{Numerical Solution of DAEs}\label{subsec:ns}
There are two main ways to solve DAEs numerically. 
The first is to directly discretize the DAE as written. 
The second is to reformulate the DAE as an ODE through a process known as index reduction, and then numerically solve that ODE.
Direct discretization is often preferred due to the cost of index reduction and to preserve the constraint \cref{eq:dae_alg} exactly for the numerical solution.
The well studied class of numerical methods for ODEs called backward differentiation formula (BDF) methods were applied first in 1971 by Gear \cite{Gear:1970pw} to solve DAEs numerically. 
These methods serve as the basis for the DASSL code \cite{nsivpdaepetzol, DASSL}. 
It is known that the $p$-step BDF method is accurate to order $p$ for $p\leq6$ and effective for solving index-1 as well as Hessenberg index-2 DAEs numerically \cite{Atkinson,nsivpdaepetzol}.

We denote the approximate solution to \cref{eq:dae_equation} or \cref{eq:index_2_dae_eq} as $[Y(t),Z(t)]^T$,
% \begin{equation}
% \label{eq:num_soln}
% X(t)=[Y(t),Z(t)]^T,
% \end{equation}
where $Y(t) \in \mathbb{R}^{n}$ and $Z(t) \in \mathbb{R}^{m}$.
In this work, we use the simplest first order BDF method, also known as the Implicit Euler Method, for the numerical solution of index-1 \cref{eq:dae_equation} as well as Hessenberg index-2 DAEs \cref{eq:index_2_dae_eq}. This method is first order accurate, stable and convergent for semi-explicit index-1 and Hessenberg index-2 DAEs \cite{Atkinson}. 
The approximate solution is calculated at a discrete set of nodes,
\begin{align}
    0=t_0<t_1<t_2<\dots <t_N= T.
\end{align}
We take these nodes to be evenly spaced, $t_k=t_0+k\Delta t$, $k=0,1,2,\dots ,N$, where $\Delta t = T/N$. 
Let  $Y_k=Y(t_k)$, and $Z_k=Z(t_k)$ for $k=0,1,\dots,N$, and set $Y_0=y(0), \, Z_0 = z(0)$.  
Applying the BDF-1 method  to the DAE \cref{eq:dae_equation} yields,
\begin{equation}\label{eq:bdf1}
 \left.
    \begin{aligned}
       Y_{k+1} &= Y_k + \Delta t f(Y_{k+1},Z_{k+1},t_{k+1}),\\
    0 &= g(Y_{k+1},Z_{k+1},t_{k+1}),
    \end{aligned}\right\}
\end{equation}
for $k=0,1,\dots,N-1$.
In the case of Hessenberg (Pure) index-2 DAE \cref{eq:index_2_dae_eq}, the constraint equation does not explicitly depend on the algebraic variable and the first order BDF method becomes
\begin{equation}\label{eq:bdf1_for_index2}
    \left.
    \begin{aligned}
        Y_{k+1} &= Y_k + \Delta t f(Y_{k+1},Z_{k+1},t_{k+1}),\\
    0 &= g(Y_{k+1},t_{k+1}),
    \end{aligned} \right \}
\end{equation}
for $k=0,1,\dots,N-1$.
This discretization is implicit in time and yields a system of $n+m$ equations in $n+m$ unknowns. 
Given values for $Y_{k}$, and $Z_{k}$, one must solve this nonlinear system for $Y_{k+1}$, and $Z_{k+1}$.
The numerically computed solution to the index-1 DAEs \cref{eq:dae_equation} and index-2 DAEs \cref{eq:index_2_dae_eq} are naturally calculated at the discrete time points $t_k$. 
In this work, whenever we must evaluate the numerical solution any \emph{other} time point $ t\in (0,T]$, we do so using linear interpolation between the two closet time points such that $t \in (t_k,t_{k+1})$.

\subsection{Quantity of Interest (QoI)}\label{subsec:qoi}
In this section we formalize the notion of a Quantity of Interest or QoI. 
In many applications, the approximate solution of an DAE is used to calculate some measurable quantity, called the QoI.
%For instance, a QoI could be average concentration levels in chemical reactions, indicating the overall effectiveness of a reaction process. 
%In fluid dynamics, QoIs such as the average flow rate, are crucial to understanding how efficiently fluids move through a given medium.
%In this work we will restrict ourselves to \emph{linear} QoIs. 
%That is, 
In this work we model QoIs as measurable quantities that can be expressed as a linear functional acting on the solution of the DAE. To describe the QoIs considered, we first introduce some notation. For any two time-dependent functions $a,b \in [\mathcal{L}^2(0,T]]^{d}$, we define
\begin{align}\label{eq:definition_integration}
    \langle a,b\rangle=\int_{0}^{T}\left(a(t),b(t)\right)\,\,dt,
\end{align}
where $(\cdot,\cdot)$ is the usual Euclidean inner-product in $\mathbb{R}^d$.  
%Another notation which will be used later in defining the error representations is,
%\begin{align}\label{eq:definition_quad_N}
%    \langle a,b\rangle_N= \sum_{n=0}^{N-1} \int_{t_n}^{t_{n+1}}\left(a(t),b(t)\right)\,\,dt.
%\end{align}

%\justifying

We consider two types of QoI in this work. The first QoI is defined over the  interval $\left[0,T\right]$ as,
\begin{align}\label{eq:qoi_def_inverval}
    \QCI{y}{z} = \langle y, \psi^y \rangle + \langle z, \psi^z \rangle,
\end{align}
where $\psi^y \in [\mathcal{L}^2(0,T]]^{n}$ and  $\psi^z \in [\mathcal{L}^2(0,T]]^{m}$.
The other type of QoI depends only on the solution at the terminal time $T$, and is defined as,
\begin{align}\label{eq:qoi_def_at_terminal_time}
    \QFT{y}{z} = \left(y(T), \zeta^y\right) + \left(z(T), \zeta^z\right),
\end{align}
where $\zeta^y\in \mathbb{R}^{n}$ and $\zeta^z\in \mathbb{R}^{m}$.

\subsection{Error in QoI}\label{subsec:error_in_qoi}
Now, given an approximate solution to a DAE, our main goal is to calculate
the error in the computed value of the QoIs.
%: $\QCI{Y}{Z}$, and $\QFT{Y}{Z}$ (see   \cref{eq:qoi_def_inverval,eq:qoi_def_at_terminal_time} for the definitions of the QoIs).
Recalling that $[y,z]^T$ is the \emph{true} solution of \cref{eq:dae_equation} and $[Y,Z]^T$ the numerically computed solution, let $e^{(y)} = (y - Y)$ and $e^{(z)} = (z - Z)$.
The errors in the two QoIs, based on the definition in \cref{eq:qoi_def_inverval,eq:qoi_def_at_terminal_time},  are given below.
\begin{itemize}
    \item The error in the QoI $\QCI{Y}{Z}$ is
    \begin{align}\label{eq:qoi_over_time}
        \eQCI := \QCI{y}{z} - \QCI{Y}{Z} &= \langle e^{(y)} , \psi^y\rangle +  \langle e^{(z)} , \psi^z\rangle.
    \end{align}
    
   % So, we define $\mathcal{Q}_{\left[ 0,T\right]}\left(e^{(x)}\right)$ as follows
   % \begin{align}\label{eq:qoi_over_time}
   %     \mathcal{Q}_{\left[ 0,T\right]}\left(e^{(x)}\right)=\langle e^{(x)}, \psi \rangle.
   % \end{align}
   
    \item Similarly, the error in the QoI $\QFT{Y}{Z}$ is
    \begin{align}\label{eq:qoi_terminal_time_error}
       \eQFT := \QFT{y}{z}- \QFT{Y}{Z} = \left( e^{(y)}(T), \zeta^y\right) + \left( e^{(z)}(T), \zeta^z\right).
    \end{align}
\end{itemize}
%Note that for linear QoIs, the error in QoI is the QoI of the error.
% For notational clarity, it is often convenient to break vector quantities into their components corresponding to the differential and algebraic variables of the given DAE. 
% Thus, we write $x=[y,z]^T$, $\psi=[\psi^y,\psi^z]^T$, $\zeta = [\zeta^y,\zeta^z]^T$, and $e^{(x)} = [e^{(y)}, e^{(z)}]^T$ where $y(t),\psi^y(t), \zeta^y, e^{(y)}(t) \in\mathbb{R}^n$ and $z(t),\psi^z(t),\zeta^z, e^{(z)}(t) \in\mathbb{R}^m$. Note that
% $e^{(y)} = (y-Y)$ and $e^{(z)} = (z-Z)$.
% %, and all functions of time are $\mathcal{L}^2$-integrable. 
% Based on this notation \cref{eq:qoi_over_time} and \cref{eq:qoi_terminal_time_error} become,
% \begin{equation}\label{eq:qoi_over_time_components}
%         \mathcal{Q}_{\left[ 0,T\right]}\left(e^{(x)}\right)= \langle e^{(y)}, \psi^y \rangle + \langle e^{(z)}, \psi^z \rangle,
% \end{equation}
% and
% \begin{equation}\label{eq:qoi_terminal_time_error_components}
%        \mathcal{Q}_{[T]}\left(e^{(x)}\right) = \left( e^{(y)}(T), \zeta^y\right) + \left( e^{(z)}(T), \zeta^z\right).
% \end{equation}

Quantifying these errors is the focus of the next two sections. In \cref{sec:adj_error_analysis_sec_adj_dae}, the analysis is carried out by defining an adjoint problem to the DAE system~\cref{eq:dae_equation}. Later, in \cref{sec:adj_error_analysis_sec_adj_ode}, index reduction is used to convert the DAE to an ODE, and then classical analysis is employed to derive error estimates.

\section{Error Analysis of DAEs by Adjoint to the DAE system}\label{sec:adj_error_analysis_sec_adj_dae}
% In this section we develop two different analyses, Adjoint DAE and Adjoint ODE, to quantify the error in QoIs. 
The error analysis in this section, called Adjoint DAE analysis, is carried out by defining an adjoint problem to the DAE system~\cref{eq:dae_equation}, and by a careful choice of the initial conditions for the adjoint problem.

\subsection{Adjoint System}\label{subsec:adjoint_dae_general}
% The Adjoint DAE analysis is motivated by defining an adjoint problem to the DAE system~\cref{eq:dae_equation} motivated by the definition of adjoint equations for ODE analysis~\cite{estep}.
We introduce adjoint differential variables $\phi^{(y)}(t)\in\mathbb{R}^n$ and adjoint algebraic variables $\phi^{(z)}(t)\in\mathbb{R}^m$, and define the corresponding adjoint problem for a system of semi-explicit DAE \cref{eq:dae_equation} as follows,
\begin{subequations}\label{eq:adj_all_time}
    \begin{align}
    -\dot{\phi}^{(y)} &= \bar{f}_y^T \phi^{(y)} + \bar{g}_y^T \phi^{(z)} + \psi^y,\quad t\in [0,T), \label{eq:diff}\\
0 &= \bar{f}_z^T \phi^{(y)} + \bar{g}_z^T \phi^{(z)} + \psi^z,\quad t\in [0,T),\label{eq:alg}
    \end{align}
\end{subequations}
where the linearized operators are
$$\displaystyle \bar{f}_y = \int_0^1 \frac{\partial f(\tilde{y},\tilde{z})}{ \partial y}ds \in \mathbb{R}^{n\times n},\quad\quad \displaystyle \bar{f}_z = \int_0^1 \frac{\partial f(\tilde{y},\tilde{z})}{ \partial z}ds \in \mathbb{R}^{n\times m},$$
$$\displaystyle \bar{g}_y = \int_0^1 \frac{\partial g(\tilde{y},\tilde{z})}{ \partial y}ds \in \mathbb{R}^{m\times n},\quad\quad \displaystyle \bar{g}_z = \int_0^1 \frac{\partial g(\tilde{y},\tilde{z})}{ \partial z}ds\in \mathbb{R}^{m\times m},$$
for $\tilde{y}=sy + (1-s)Y$, and $\tilde{z}=sz + (1-s)Z$.
The consistent initial conditions corresponding to adjoint algebraic variables $\phi^{(z)}$ at $t=T$ are,
\begin{equation}
\phi^{(z)}(T) = 
\begin{cases}
-\left[\bar{g}_z^T\right]^{-1}\left[\bar{f}_z^T\phi^{(y)}+\psi^z\right]\Bigr|_{t=T}, & \text{if adjoint DAE is index-1},\\
-\left(\bar{f}_z^T\bar{g}_y^T\right)^{-1}\left[\bar{f}_z^T\bar{f}_y^T\phi^{(y)}+\bar{f}_z^T\psi^y -\psi_t^z\right]\Bigr|_{t=T},  & \text{if adjoint DAE is index-2}.
\end{cases}
\label{eq:phi_z_final}
\end{equation}
These conditions follow directly from \cref{eq:DAE_IC_I,eq:DAE_IC_II}.
Here, we leave the initial conditions $\phi^{(y)}(T)$  unspecified for the moment. 

\begin{theorem}
    The adjoint DAE system \cref{eq:adj_all_time} preserves the index structure of the the original DAE system \cref{eq:dae_equation} provided the numerical solution $[Y,Z]^T$ is sufficiently close to the true solution $[y,z]^T$. That is, the adjoint DAE system has the same index as the original DAE provided $e^{(y)}$ and $e^{(z)}$ are sufficiently small.
\end{theorem}

\begin{proof}
First assume the DAE \cref{eq:dae_equation} is index-1. Then, from the discussion in \cref{subsec:index_1_subsec}, the adjoint DAE \cref{eq:adj_all_time} is index-1 if and only if $\bar{g}_z^T$, or equivalently, $\bar{g}_z$ is invertible. Now, since the original DAE is index-1, $g_z$ is invertible. However, this does not directly imply that $\bar{g}_z$ is also invertible. Note that $\bar{g}_z$   may be written as,
\[
\bar{g}_z  =  \int_0^1 g_z (se^{(y)} + Y,se^{(z)} + Z)ds.
\]
Consider the following  function of $[u,v]^T \in \mathbb{R}^{n+m}$,
\[
G(u,v) =  \mathrm{det}\left[ \int_0^1 g_z(su + Y, sv + Z)ds\right],
\]
where $\mathrm{det}$ denotes the determinant. Now $G(u,v)$ is a continuous function of $u$ and $v$ since the determinant is a continuous function of its entries.  Further, $G(0,0) =  \mathrm{det}[g_z(Y,Z)] \neq 0$ since $g_z$ is invertible. Without loss of generality assume that $G(0,0) > 0$. Then from the continuity of $G$, there is a ball of radius $\epsilon$ around $[0,0]^T$, $B_\epsilon([0,0]^T)$, such that $G(u,v) > 0$ for $[u,v]^T \in B_\epsilon([0,0]^T)$. Noting that $G(e^{(y},e^{(z)}) = \mathrm{det}[\bar{g}_z]$, we conclude that $\bar{g}_z$ is invertible for $e^{(y)}$ and $e^{(z)}$ sufficiently small.

A similar argument shows that the adjoint DAE is index-2 if the original DAE is index-2 and $e^{(y)}$ and $e^{(z)}$ are sufficiently small.

\end{proof}

Defining the correct initial conditions for $\phi^{(y)}(T)$ is a key component of analysis, and is motivated from the work on adjoint based sensitivity analysis in \cite{adjointsensitivitypetzoldae}. For the adjoint DAE \cref{eq:adj_all_time}, the initial conditions must be set in a manner determined by the index of the original DAE as well as the dependency of QoIs over the interval $[0,T]$, and at the terminal time $T$, as we show in later sections. For now, we note some properties of the linearized operators used later in the analysis. By the fundamental theorem of calculus,
    \begin{align*}
    f(y,z)-f(Y,Z)&=\int_{0}^{1}\frac{d}{d s}\left[f(sy+(1-s)Y,sz+(1-s)Z)\right]\,ds,\\
    &=\int_{0}^{1}\frac{\partial f}{\partial y}ds\,(y-Y) + \int_{0}^{1}\frac{\partial f}{\partial z}ds\,(z-Z),\\
    &=\bar{f}_y(y-Y)+\bar{f}_z(z-Z),
    \end{align*}
so, we have
\begin{align}\label{eq:property_adj_1_property_1}
    \bar{f}_y(y-Y)+\bar{f}_z(z-Z)=f(y,z)-f(Y,Z),
\end{align}
similarly,
\begin{align}\label{eq:property_adj_1_property_2}
    \bar{g}_y(y-Y)+\bar{g}_z(z-Z)&=g(y,z)-g(Y,Z).
\end{align}

The following lemma is used in deriving error representations later.
\begin{lemma}\label{lem:adj_error}
%Let $X = [Y,Z]^T$ be the approximate solution to \cref{eq:dae_equation} and $\phi^{(x)}=[\phi^{(y)}, \phi^{(z)}]^T$ be the solution to the adjoint problem \cref{eq:adj_all_time}. Then,
We have 
%%%%with the initial condition $\phi^{(y)}(T)$ then
\begin{align}\label{eq:error_dae1_differential}
   \eQCI &= \left( \phi^{(y)}(0),e^{(y)}(0) \right) - \left(\phi^{(y)}(T),y(T)-Y(T)\right) +\langle \phi^{(y)}, f(Y,Z) - \dot{Y} \rangle+ \langle \phi^{(z)}, g(Y,Z)  \rangle.
\end{align}
% where $\psi^y$ and $\psi^z\in \mathcal{L}^2(0, T]$.
\end{lemma}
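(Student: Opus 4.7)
The plan is to derive the representation by testing the adjoint equations against the error and then eliminating the derivative of the error through integration by parts, which is the standard template for adjoint a posteriori analysis. Concretely, I would take the Euclidean inner product of \cref{eq:diff} with $e^{(y)}$ and of \cref{eq:alg} with $e^{(z)}$, integrate over $[0,T]$, add the two resulting identities, and rearrange using the transpose to group the linearized operators acting on the errors. After this step the right-hand side reads
\begin{equation*}
\langle e^{(y)},\psi^y\rangle + \langle e^{(z)},\psi^z\rangle = -\langle e^{(y)},\dot\phi^{(y)}\rangle - \langle \bar f_y e^{(y)} + \bar f_z e^{(z)}, \phi^{(y)}\rangle - \langle \bar g_y e^{(y)} + \bar g_z e^{(z)}, \phi^{(z)}\rangle.
\end{equation*}

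Next I would invoke the two identities \cref{eq:property_adj_1_property_1,eq:property_adj_1_property_2} to replace the bracketed linearized terms by differences of the nonlinear functions. Because $y$ is the true solution of \cref{eq:dae_equation}, we have $\dot y = f(y,z)$ and $g(y,z) = 0$, so the replacements simplify to $\bar f_y e^{(y)}+\bar f_z e^{(z)} = \dot y - f(Y,Z)$ and $\bar g_y e^{(y)}+\bar g_z e^{(z)} = -g(Y,Z)$. Substituting these turns the algebraic term into $\langle g(Y,Z),\phi^{(z)}\rangle$ immediately, and leaves the differential part in the form $-\langle e^{(y)},\dot\phi^{(y)}\rangle - \langle \dot y - f(Y,Z),\phi^{(y)}\rangle$.

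The remaining step is an integration by parts on the first term,
\begin{equation*}
-\langle e^{(y)},\dot\phi^{(y)}\rangle = \bigl(e^{(y)}(0),\phi^{(y)}(0)\bigr) - \bigl(e^{(y)}(T),\phi^{(y)}(T)\bigr) + \langle \dot e^{(y)},\phi^{(y)}\rangle,
\end{equation*}
after which the $\langle \dot y,\phi^{(y)}\rangle$ contributions coming from $\dot e^{(y)} = \dot y - \dot Y$ and from $\dot y - f(Y,Z)$ cancel, yielding the desired $\langle f(Y,Z)-\dot Y,\phi^{(y)}\rangle$. Writing $\bigl(e^{(y)}(T),\phi^{(y)}(T)\bigr) = \bigl(\phi^{(y)}(T), y(T)-Y(T)\bigr)$ puts the boundary terms in the form stated in the lemma.

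I do not expect a genuine obstacle here; the proof is essentially bookkeeping once the testing/integration-by-parts template is set up. The only point requiring care is the cancellation between the $\langle \dot y,\phi^{(y)}\rangle$ contribution produced by integrating by parts and the $\langle \dot y,\phi^{(y)}\rangle$ contribution arising from rewriting $\bar f_y e^{(y)}+\bar f_z e^{(z)}$ via \cref{eq:property_adj_1_property_1} together with $\dot y = f(y,z)$; tracking signs and ensuring that $\dot Y$ rather than $\dot y$ survives is the only place an error could slip in. Note also that the lemma's statement does not use the adjoint initial condition $\phi^{(y)}(T)$ at all, so no assumption on it is needed at this stage — the choice of $\phi^{(y)}(T)$ enters only later when one specializes to particular QoIs.
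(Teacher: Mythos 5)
Your proposal is correct and matches the paper's own proof in substance: test the adjoint equations against the error components, pass the transposes of the linearized operators across the inner product, integrate by parts to shift $\dot\phi^{(y)}$ onto $e^{(y)}$, then invoke \cref{eq:property_adj_1_property_1,eq:property_adj_1_property_2} together with $\dot y=f(y,z)$ and $g(y,z)=0$. The only difference is the order in which you apply the linearization identities versus the integration by parts (the paper integrates by parts first), which is cosmetic; the cancellation you flag as the one delicate point is handled correctly.
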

The proof is quite similar to standard proofs for error analysis of ODEs, however, for completeness we include it in appendix \ref{sec:app_proofs}.
%This lemma will be useful for our error representations using Adjoint DAE approach.

\subsection{Error Analysis for Semi-explicit Index-1 DAE}\label{subsec:error_analysis_index_1_sec}
% \subsubsection{Adjoint DAE Technique for Semi-explicit Index-1 DAE}
% To analyze the error in the QoI which is cumulative in time, for the semi-explicit index-1 DAE \cref{eq:dae_equation}, we employ the adjoint system in \cref{eq:adj_all_time} with the initial condition given by $\phi^{(y)}(T) = 0$. 
Now we derive error representations for QoIs computed from the numerical solution of index-1 DAEs.
The error in semi-explicit index 1 DAE for the computed QoI $\QCI{Y}{Z}$ is analyzed in \cref{thm:adj_1_qoi_diff_vari_all_time}, while the error in the computed QoI $\QFT{Y}{Z}$ is analyzed in \cref{thm:adj1_qoi_terminal_time_diff_algeb_vari}. 
%Finally, a special case is investigated in \cref{cor:adj1_qoi_terminal_time_diff_vari}.

\begin{theorem}\label{thm:adj_1_qoi_diff_vari_all_time}
%Let $X = [Y,Z]^T$ be the approximate solution to the semi-explicit index-1 DAE \cref{eq:dae_equation} and $\phi^{(x)}=[\phi^{(y)}, \phi^{(z)}]^T$ be the solution to the associated adjoint problem \cref{eq:adj_all_time} with initial condition 
Let $\phi^{(y)}(T)=0$ in the adjoint problem \cref{eq:adj_all_time}.
Then the error in the computed QoI $\QCI{Y}{Z}$ is,
\begin{align}\label{eq:error_dae1_both_diff_alg}
    \eQCI = \left( \phi^{(y)}(0),e^{(y)}(0) \right) + \langle \phi^{(y)}, f(Y,Z) - \dot{Y}  \rangle + \langle \phi^{(z)}, g(Y,Z)  \rangle.
\end{align}
% where $\psi^y$ and $\psi^z$ belong to the space $\mathcal{L}^2(0, T]$.
\end{theorem}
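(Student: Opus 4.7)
The plan is to deduce Theorem 3.1 as an almost immediate corollary of Lemma 3.1, since the lemma has already done the bulk of the integration-by-parts work. The key move is to use the freedom in prescribing the adjoint terminal condition to kill one of the boundary terms, and to verify that the resulting choice is consistent with the algebraic structure of the adjoint system.

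First I would record the error identity from Lemma 3.1, namely
\begin{align*}
\langle e^{(y)}, \psi^y \rangle + \langle e^{(z)}, \psi^z \rangle
&= \bigl(\phi^{(y)}(0), e^{(y)}(0)\bigr) - \bigl(\phi^{(y)}(T), y(T)-Y(T)\bigr) \\
&\quad + \langle \phi^{(y)}, f(Y,Z) - \dot{Y} \rangle + \langle \phi^{(z)}, g(Y,Z) \rangle.
\end{align*}
By the decomposition of the cumulative QoI in \cref{eq:qoi_over_time_components}, the left-hand side is exactly $\mathcal{Q}_{[0,T]}(e^{(x)})$. Imposing the prescribed terminal condition $\phi^{(y)}(T) = 0$ immediately eliminates the second boundary term $(\phi^{(y)}(T), y(T)-Y(T))$, yielding the claimed representation \cref{eq:error_dae1_both_diff_alg}. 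Substitution and relabeling are the only operations needed.

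The one subtlety I would flag before concluding is the \emph{admissibility} of the terminal condition $\phi^{(y)}(T) = 0$ for the adjoint system \cref{eq:adj_all_time}. The adjoint is itself a semi-explicit DAE whose algebraic equation \cref{eq:alg} reads $0 = \bar{f}_z^T \phi^{(y)} + \bar{g}_z^T \phi^{(z)} + \psi^z$. For an index-1 DAE the matrix $\bar{g}_z$ is invertible in a neighborhood of the solution (this is exactly the defining condition of semi-explicit index-1, carried over to the linearized coefficients), so given any choice of $\phi^{(y)}(T)$ the algebraic component $\phi^{(z)}(T)$ is determined uniquely by
\[
\phi^{(z)}(T) = -\bigl(\bar{g}_z^T\bigr)^{-1}\bigl(\bar{f}_z^T \phi^{(y)}(T) + \psi^z(T)\bigr).
\]
Hence $\phi^{(y)}(T) = 0$ is a consistent terminal condition, and the adjoint initial value problem (running backwards in time from $T$ to $0$) is well posed. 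This consistency check is the only nontrivial step; everything else is direct substitution, so I do not anticipate any significant obstacle in the proof.
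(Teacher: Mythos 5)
Your proof is correct and follows the same approach as the paper: apply \cref{lem:adj_error} and set $\phi^{(y)}(T)=0$ to kill the terminal boundary term. Your extra remark on the consistency of the terminal condition (that $\phi^{(z)}(T)$ is determined by the algebraic equation because $\bar{g}_z$ is invertible for index-1) is a sensible sanity check that the paper leaves implicit.
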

\begin{proof}
    The proof directly follows from the \cref{lem:adj_error} by setting  $\phi^{(y)}(T) = 0$.
  
\end{proof}
%We note here that the left hand side of \cref{eq:error_dae1_both_diff_alg} cannot be evaluated without knowing the true solution of the index-1 DAE \cref{eq:dae_equation}.

% Now, if the QoI \(\QFT{Y}{Z}\) at the terminal time \(t=T\) depends on both $y$ and $z$, we utilize the adjoint system \cref{eq:adj_all_time} with $\psi(t)=0$, and the initial condition 
%(motivation following from \cite{adjointsensitivitypetzoldae}),
% \begin{align}\label{eq:bcs_index_1_q_on_y_z}
%     \phi^{(y)}(T) = \zeta^{y} - \bar{g}^T_{y}\left(\bar{g}^T_{z}\right)^{-1}\Bigr|_{t=T}\zeta^{z}.
% \end{align}

\begin{theorem}\label{thm:adj1_qoi_terminal_time_diff_algeb_vari}
% Let the QoI $\QFT{Y}{Z}$ at $t=T$ depend on both $y$ and $z$. 
%Let $X = [Y,Z]^T$ be the approximate solution to the semi-explicit index-1 DAE \cref{eq:dae_equation} and $\phi^{(x)}=[\phi^{(y)}, \phi^{(z)}]^T$ be the solution to the associated adjoint problem \cref{eq:adj_all_time} with 
 Let $\psi^y(t)=0$, and $\psi^z(t)=0$ in \cref{eq:adj_all_time}, and the adjoint initial condition given by 
\begin{align}\label{eq:bcs_index_1_q_on_y_z}
    \phi^{(y)}(T) = \zeta^{y} - \bar{g}^T_{y}\left(\bar{g}^T_{z}\right)^{-1}\Bigr|_{t=T}\zeta^{z}.
\end{align}
% \cref{eq:bcs_index_1_q_on_y_z} for $\phi^{(y)}(T)$.
Then the error in the computed QoI $\QFT{Y}{Z}$, is given by
\begin{align}\label{eq:error_dae1_differ_algeb_terminal_time}
    \eQFT 
    &= \left( \phi^{(y)}(0),e^{(y)}(0) \right) + \langle \phi^{(y)}, f(Y,Z) - \dot{Y} \rangle + \langle \phi^{(z)}, g(Y,Z)  \rangle.
\end{align}
\end{theorem}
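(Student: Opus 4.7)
The starting point is Lemma 3.1 with the simplification that $\psi = 0$, which makes the left-hand side of \cref{eq:error_dae1_differential} vanish. Rearranging the remaining identity isolates the terminal boundary term:
\begin{align*}
\left(\phi^{(y)}(T), e^{(y)}(T)\right) = \left(\phi^{(y)}(0), e^{(y)}(0)\right) + \langle \phi^{(y)}, f(Y,Z) - \dot{Y} \rangle + \langle \phi^{(z)}, g(Y,Z) \rangle.
\end{align*}
The goal is then to show that the left-hand side equals $\mathcal{Q}_{[T]}(e^{(x)})$ plus the advertised correction. This reduces the theorem to a purely algebraic identity at $t = T$ involving the prescribed terminal condition $\phi^{(y)}(T)$ and the two components of the QoI weight $\zeta$.

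To prove that identity, I would substitute the definition of $\phi^{(y)}(T)$ from \cref{eq:bcs_index_1_q_on_y_z} into $(\phi^{(y)}(T), e^{(y)}(T))$ and use the self-adjointness of the Euclidean inner product to move $\bar{g}_y^T$ back onto $e^{(y)}(T)$. This yields
\begin{align*}
\left(\phi^{(y)}(T), e^{(y)}(T)\right) = \left(\zeta^y, e^{(y)}(T)\right) - \left((\bar{g}_z^T)^{-1}\zeta^z, \bar{g}_y e^{(y)}(T)\right)\Bigr|_{t=T}.
\end{align*}
The first term already contributes the $y$-part of $\mathcal{Q}_{[T]}(e^{(x)})$. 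The task is now to massage the second term so that a $(\zeta^z, e^{(z)}(T))$ contribution emerges, with whatever does not fit becoming the correction involving $g(Y,Z)|_{t=T}$.

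The key step is invoking the linearization identity \cref{eq:property_adj_1_property_2}, together with the fact that the true solution satisfies $g(y,z) = 0$, to write
\begin{align*}
\bar{g}_y e^{(y)} = -\bar{g}_z e^{(z)} - g(Y,Z).
\end{align*}
Plugging this in and using $\bar{g}_z^T (\bar{g}_z^T)^{-1} = I$ turns $-((\bar{g}_z^T)^{-1}\zeta^z, \bar{g}_y e^{(y)})|_{t=T}$ into exactly $(\zeta^z, e^{(z)}(T)) + ((\bar{g}_z^T)^{-1}\zeta^z, g(Y,Z))|_{t=T}$. Combining all the pieces and recognizing $\mathcal{Q}_{[T]}(e^{(x)}) = (\zeta^y, e^{(y)}(T)) + (\zeta^z, e^{(z)}(T))$ from \cref{eq:qoi_terminal_time_error_components} gives the claim.

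I do not expect any serious obstacle: the entire proof is essentially a bookkeeping exercise once Lemma 3.1 and the linearization identity are in hand. The only subtle point — and the one that motivates the specific choice \cref{eq:bcs_index_1_q_on_y_z} — is recognizing in advance that $\bar{g}_z e^{(z)}$ can be recovered from $\bar{g}_y e^{(y)}$ only up to the residual $g(Y,Z)$, which is why that residual must appear as a correction in the final error representation rather than being absorbed into the adjoint initial condition.
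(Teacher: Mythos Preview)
Your proposal is correct and follows essentially the same route as the paper: start from Lemma~\ref{lem:adj_error} with $\psi=0$, substitute the prescribed $\phi^{(y)}(T)$, shift $\bar g_y^T$ across the inner product, invoke \cref{eq:property_adj_1_property_2} together with $g(y,z)=0$ to convert $\bar g_y e^{(y)}$ into $-\bar g_z e^{(z)}-g(Y,Z)$, and read off $\mathcal{Q}_{[T]}(e^{(x)})$ plus the residual correction. There is nothing to add.
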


\begin{proof}
Substituting $\psi^y(t)=0$, and $\psi^z(t)=0$ in \cref{lem:adj_error}, we have

    \begin{align}\label{eq:plug_bc_for_index1_phi_T}
        &\left(\phi^{(y)}(T),y(T)- Y(T)\right) 
        =  \langle \phi^{(y)}, f(Y,Z) - \dot{Y}\rangle + \langle \phi^{(z)},g(Y,Z)\rangle + \left(\phi^{(y)}(0),y(0)- Y(0)\right).
    \end{align}
    Considering the left hand side of  \cref{eq:plug_bc_for_index1_phi_T} and using \cref{eq:bcs_index_1_q_on_y_z} and \cref{eq:property_adj_1_property_2},
    \begin{align*}
        \left(\phi^{(y)}(T),e^{(y)}(T)\right)  
        %&= \left(\zeta^{y} - \bar{g}^T_{y}\left(\bar{g}^T_{z}\right)^{-1}\Bigr|_{t=T}\zeta^{z},y(T)- Y(T)\right),\\
        &= \left(\zeta^{y},e^{(y)}(T)\right) - \left(\bar{g}^T_{y}\left(\bar{g}^T_{z}\right)^{-1}\zeta^{z}, y-Y\right)\Bigr|_{t=T},\\
        &= \left(\zeta^{y},e^{(y)}(T)\right) - \left(\left(\bar{g}^T_{z}\right)^{-1}\zeta^{z}, \bar{g}_{y}\left(y - Y\right)\right)\Bigr|_{t=T},\\
        &= \left(\zeta^{y},e^{(y)}(T)\right) - \left(\left(\bar{g}^T_{z}\right)^{-1}\zeta^{z}, g(y,z) - g(Y,Z) - \bar{g}_{z}(z-Z)\right)\Bigr|_{t=T},\\
        &= \left(\zeta^{y},e^{(y)}(T)\right) + \left(\left(\bar{g}^T_{z}\right)^{-1}\zeta^{z}, g(Y,Z)\right)\Bigr|_{t=T} + \left(\left(\bar{g}^T_{z}\right)^{-1}\zeta^{z},\bar{g}_{z}(z-Z)\right)\Bigr|_{t=T},\\
        &= \left(\zeta^{y},e^{(y)}(T)\right) + \left(\zeta^{z},e^{(z)}(T)\right),\\
     \end{align*}
    where we used  $g(Y, Z)|_{t=T=t_N} =  0$ in the last step.
    Combining this with \cref{eq:plug_bc_for_index1_phi_T} and noticing that 
    $\eQFT = \left(\zeta^{y},e^{(y)}(T)\right) + \left(\zeta^{z},e^{(z)}(T)\right)$
    proves the theorem.
    
% \begin{align*}
%     \mathcal{Q}_{\left[T\right]}\left(e^{(x)}(T)\right) 
%     &= \left( \phi^{(y)}(0),e^{(y)}(0) \right) + \langle \phi^{(y)}, f(Y,Z) - \dot{Y}  \rangle + \langle \phi^{(z)}, g(Y,Z)  \rangle\nonumber\\
%     &\quad\quad\quad\quad- \left( \left(\bar{g}_{z}^T\right)^{-1}\zeta^{z}, g(Y,Z)\right)\Bigr|_{t=T}.
% \end{align*}
\end{proof}

% Now, if the QoI \(\QFT{Y}{Z}\) at \(t=T\) depends only on the differential variables $y$, then we define the corresponding adjoint problem for the semi-explicit index-1 DAE \cref{eq:dae_equation} same as \cref{eq:adj_all_time} with $\psi(t)=0$ but now the initial condition \cite{adjointsensitivitypetzoldae} is
% \begin{align}\label{eq:bcs_index_1_q_on_z}
%     \phi^{(y)}(T) &= \zeta^{y}.
% \end{align}

\begin{remark}
Quite often the QoI $\QFT{y}{z}$ depends on only $y$, i.e.,  $\QFT{y}{z} = \left(y(T), \zeta^y\right)$. In this case we have $\zeta^{z} = 0$, and the initial condition simplifies to $\phi^{(y)}(T)=\zeta^y$, while the error representation remains \eqref{eq:error_dae1_differ_algeb_terminal_time}.
% and  \eqref{eq:error_dae1_differ_algeb_terminal_time} simplifies to,
% \begin{align}\label{eq:error_dae1_differential_terminal_time}
%     \eQFT &= \left( \phi^{(y)}(0),e^{(y)}(0) \right) + \langle \phi^{(y)}, f(Y,Z) - \dot{Y}  \rangle + \langle \phi^{(z)}, g(Y,Z)  \rangle.
% \end{align}
\end{remark}

% \begin{corollary}\label{cor:adj1_qoi_terminal_time_diff_vari}
% Let the QoI $\QFT{y}{z}$ depend on only $y$, i.e.,  $\QFT{y}{z} = \left(y(T), \zeta^y\right)$.
% %Let $X = [Y,Z]^T$ be the approximate solution to the semi-explicit index-1 DAE \cref{eq:dae_equation} and $\phi^{(x)}=[\phi^{(y)}, \phi^{(z)}]^T$ be the solution to the associated adjoint problem \cref{eq:adj_all_time} with 
% Let $\psi^y(t)=0,\,\psi^z(t)=0$, and the adjoint initial condition given by $\phi^{(y)}(T)=\zeta^y$ (obtained by setting $\zeta^z = 0$ in \cref{eq:bcs_index_1_q_on_y_z}).
% Then the error in the computed QoI $\left(Y(T), \zeta^y\right)$, is given by

% \begin{align}\label{eq:error_dae1_differential_terminal_time}
%     \left(e^{(y)}(T),\zeta^y\right) &= \left( \phi^{(y)}(0),e^{(y)}(0) \right) + \langle \phi^{(y)}, f(Y,Z) - \dot{Y}  \rangle + \langle \phi^{(z)}, g(Y,Z)  \rangle.
% \end{align}
% \end{corollary}
% \begin{proof}
%     The proof directly follows from \cref{thm:adj1_qoi_terminal_time_diff_algeb_vari} by simply setting $\zeta^{z}=0$.
% \end{proof}

\subsection{Error Analysis for Hessenberg Index-2 DAE}\label{subsec:error_analysis_Hessenberg_sec}

The error in Hessenberg index 2 DAE for the computed QoI $\QCI{Y}{Z}$ is analyzed in \cref{thm:error_dae_index2_updated}, while the error in the computed QoI $\QFT{Y}{Z}$ is analyzed in \cref{thm:adj2_qoi_terminal_time_both_diff_alg_vari}. 
%Finally, a special case is investigated in \cref{cor:adj2_qoi_terminal_time_diff_vari}.

% \subsubsection{Adjoint DAE Technique for Hessenberg Index-2 DAE}
% The adjoint system \cref{eq:adj_all_time} is again utilized to analyze the error in the QoI which is cumulative in time for Hessenberg index-2 DAE \cref{eq:index_2_dae_eq}. However, in this case $g_z$ is singular (because $g$ is now independent of $z$), and the initial condition 
% %(motivation following from \cite{adjointsensitivitypetzoldae}) 
% for $\phi^{(y)}(T)$ is
% \begin{align}\label{eq:phi_1(T)_final}
%     \phi^{(y)}(T)=-\bar{g}_y^T\left(\bar{f}_z^T\bar{g}_y^T\right)^{-1}\psi^z\Bigr|_{t=T}.
% \end{align}

\begin{theorem}\label{thm:error_dae_index2_updated}
%Let $X = [Y,Z]^T$ be the approximate solution to the Hessenberg index-2 DAE \cref{eq:index_2_dae_eq} and $\phi^{(x)}=[\phi^{(y)}, \phi^{(z)}]^T$ be the solution to the associated adjoint problem \cref{eq:adj_all_time} and the initial condition for
Consider \cref{eq:adj_all_time} and let $\phi^{(y)}(T)$ be given by %\cref{eq:phi_1(T)_final}. 
\begin{align}\label{eq:phi_1(T)_final}
    \phi^{(y)}(T)=-\bar{g}_y^T\left(\bar{f}_z^T\bar{g}_y^T\right)^{-1}\psi^z\Bigr|_{t=T}.
\end{align}
Then the error in the computed QoI $\QCI{Y}{Z}$, is given by
\begin{align}\label{eq:error_dae_index2_differential_updated}
    \eQCI &= \left( \phi^{(y)}(0),e^{(y)}(0) \right) + \langle \phi^{(y)}, f(Y,Z) - \dot{Y}  \rangle+ \langle \phi^{(z)}, g(Y)  \rangle.
\end{align}

\end{theorem}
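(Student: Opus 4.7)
The plan is to reduce the theorem to an application of \cref{lem:adj_error} combined with a careful rewriting of the terminal boundary term using the Hessenberg structure. The key observation is that for \cref{eq:index_2_dae_eq} the algebraic constraint satisfies $g(y,z)=g(y,t)$, so $\bar{g}_z=0$ and $g(Y,Z)=g(Y)$. In particular, the adjoint algebraic equation \cref{eq:alg} collapses to $0=\bar{f}_z^T\phi^{(y)}+\psi^z$, which is an algebraic condition on $\phi^{(y)}$ alone. Since \cref{lem:adj_error} was derived abstractly from the adjoint equations and the linearization identities \cref{eq:property_adj_1_property_1}--\cref{eq:property_adj_1_property_2}, it continues to hold verbatim; I will simply substitute $g(Y,Z)=g(Y)$.

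Starting from \cref{eq:error_dae1_differential}, the only term that is not already in the desired form is the boundary contribution $-(\phi^{(y)}(T),y(T)-Y(T))$. Inserting the prescribed $\phi^{(y)}(T)$ from \cref{eq:phi_1(T)_final} and moving the transposes across the inner product gives
\begin{equation*}
\bigl(\phi^{(y)}(T),e^{(y)}(T)\bigr)
= -\Bigl(\bigl(\bar{f}_z^T\bar{g}_y^T\bigr)^{-1}\psi^z,\;\bar{g}_y\bigl(y-Y\bigr)\Bigr)\Bigr|_{t=T}.
\end{equation*}
Next I apply \cref{eq:property_adj_1_property_2} with $\bar{g}_z=0$, which yields $\bar{g}_y(y-Y)=g(y)-g(Y)=g(y,t)-g(Y,t)$. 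Evaluating at $t=T$ and using the fact that the true solution satisfies the constraint, $g(y(T),T)=0$, gives $\bar{g}_y(y-Y)\bigr|_{t=T}=-g(Y(T),T)$. Substituting this back produces
\begin{equation*}
-\bigl(\phi^{(y)}(T),y(T)-Y(T)\bigr)
= -\Bigl(g(Y),\,\bigl(\bar{f}_z^T\bar{g}_y^T\bigr)^{-1}\psi^z\Bigr)\Bigr|_{t=T},
\end{equation*}
which is exactly the extra term appearing in \cref{eq:error_dae_index2_differential_updated}. Combining with the remaining terms from \cref{lem:adj_error} completes the derivation.

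The main obstacle I anticipate is not algebraic but structural: one must justify that $\bar{f}_z^T\bar{g}_y^T$ is invertible so that the choice \cref{eq:phi_1(T)_final} is well defined. This is precisely the Hessenberg index-2 hypothesis $g_yf_z$ nonsingular applied at the linearized level, so it should be taken as part of the standing assumption. Beyond this, the proof is essentially a bookkeeping exercise — moving transposes across the Euclidean inner product, invoking $\bar{g}_z=0$, and using that $g(y(T),T)=0$ — and should be short once the correct identification of the boundary term is made.
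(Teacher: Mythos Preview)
Your proposal is correct and follows essentially the same approach as the paper's own proof: invoke \cref{lem:adj_error} with $\bar g_z=0$, substitute the prescribed terminal value \cref{eq:phi_1(T)_final} into the boundary term, move the transpose across the inner product, apply \cref{eq:property_adj_1_property_2} together with $g(y(T),T)=0$, and combine. Your additional remarks on the collapsed adjoint algebraic equation and on the invertibility of $\bar f_z^T\bar g_y^T$ are accurate observations, though the paper simply treats the latter as part of the standing Hessenberg index-2 assumption without further comment.
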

\begin{proof}
    Since $g$ is independent of $z$, we have  $\bar{g}_z=0$. Using this in \cref{lem:adj_error} leads to,
    \begin{align}\label{eq:proof_of_error_in_index_2_cumulative}
     \eQCI &= \left( \phi^{(y)}(0),e^{(y)}(0) \right) - \left(\phi^{(y)}(T),y(T)-Y(T)\right) + \langle \phi^{(y)}, f(Y,Z) - \dot{Y} \rangle + \langle \phi^{(z)}, g(Y)  \rangle.
    \end{align}
Clearly, the term $
    \left(\phi^{(y)}(T),y(T)-Y(T)\right)$ in \cref{eq:proof_of_error_in_index_2_cumulative} is not computable because of the presence of unknown $y(T)$. Using \cref{eq:phi_1(T)_final} in this  term, followed by \cref{eq:property_adj_1_property_2} leads to,
\begin{align}\label{eq:expression}
     \left(\phi^{(y)}(T),y(T)-Y(T)\right) &= \left( -\bar{g}_y^T\left(\bar{f}_z^T\bar{g}_y^T\right)^{-1}\psi^z, e^{(y)}\right)\Bigr|_{t=T} ,\notag\\
    &= - \left( \left(\bar{f}_z^T\bar{g}_y^T\right)^{-1}\psi^z, \bar{g}_y(y-Y)\right)\Bigr|_{t=T} ,\notag\\
    &= - \left(\left(\bar{f}_z^T\bar{g}_y^T\right)^{-1}\psi^z,  g(y)-g(Y)\right)\Bigr|_{t=T} ,\notag\\
    &= 0,
\end{align}
where we used $g(y) = 0$, and $g(Y)|_{t=T=t_N} =  0$ in the last step. Combining \cref{eq:expression} and \cref{eq:proof_of_error_in_index_2_cumulative} proves the result.

% \begin{align*}
%     \mathcal{Q}_{[0,T]}(e^{(x)}) &= \left( \phi^{(y)}(0),e^{(y)}(0) \right) + \langle \phi^{(y)}, f(Y,Z) - \dot{Y}  \rangle+ \langle \phi^{(z)}, g(Y)  \rangle\notag\\
%     &\quad\quad\quad - \left( g(Y), \left(\bar{f}_z^T\bar{g}_y^T\right)^{-1}\psi^z\right)\Bigr|_{t=T}.
%    \end{align*}
\end{proof}

% Now, we consider the QoI at the  terminal time. If the QoI $\mathcal{Q}_{\left[T\right]}\left(x\right)$ depends on both the differential and algebraic components of $x$, then the corresponding adjoint system for the Hessenberg index-2 DAE \cref{eq:index_2_dae_eq} is given by \cref{eq:adj_all_time} with $\psi(t)=0$. In this case we define the initial condition (motivation following from \cite{adjointsensitivitypetzoldae}) for $\phi^{(y)}(T)$ as 
% \begin{align}\label{eq:phi_T_at_terminal_when_fzgy_not_constant}
%     \phi^{(y)}(T) &= P^T\left[ \zeta^y - \bar{f}^T_{y} \bar{g}^T_{y}(\bar{f}^T_{z}\bar{g}^T_{y})^{-1}\zeta^z  - \frac{d \bar{g}^T_{y}}{dt}\left(\bar{f}^T_{z}\bar{g}^T_{y}\right)^{-1} \zeta^z \right]
% \end{align}
% where, $P = I - \bar{f}_{z}\left(\bar{g}_{y}\bar{f}_{z}\right)^{-1}\bar{g}_{y} $. The motivation for this initial condition is illustrated in the proof of \cref{thm:adj2_qoi_terminal_time_both_diff_alg_vari}.

Next we prove some technical results which are utilized in the analysis for the computed QoI $\QFT{Y}{Z}$. To this end we define,
\begin{equation}\label{eq:def_P}
P = I - \bar{f}_{z}\left(\bar{g}_{y}\bar{f}_{z}\right)^{-1}\bar{g}_{y}. 
\end{equation}

\begin{lemma}\label{lem:tech_results}
    We have,
    \begin{enumerate}
        \item $\left(P^T\zeta^y,e^{(y)}\right) = \left((\bar{f}^T_{z}\bar{g}^T_{y})^{-1}\bar{f}_z^T\zeta^y,g(Y)\right)$,
        \item $\displaystyle \left(-P^T\bar{f}^T_{y} \bar{g}^T_{y}(\bar{f}^T_{z}\bar{g}^T_{y})^{-1}\zeta^z , e^{(y)}\right)$ $=$\\
        $\displaystyle \phantom{a}\quad -\left(\bar{g}^T_{y}(\bar{f}^T_{z}\bar{g}^T_{y})^{-1}\zeta^z ,f(y,z)\right) + \left(\bar{g}^T_{y}(\bar{f}^T_{z}\bar{g}^T_{y})^{-1}\zeta^z ,f(Y,Z)\right) +\left(\zeta^z ,e^{(z)}\right)$\\
        $\displaystyle \phantom{a}\quad - \left((\bar{f}^T_{z}\bar{g}^T_{y})^{-1}\bar{f}_z^T\bar{f}^T_{y}\bar{g}^T_{y}(\bar{f}^T_{z}\bar{g}^T_{y})^{-1}\zeta^z,g(Y)\right)$,
        \item $\displaystyle \left( \frac{d \bar{g}^T_{y}}{dt}\left(\bar{f}^T_{z}\bar{g}^T_{y}\right)^{-1} \zeta^z,e^{(y)}\right) = $\\
 $\displaystyle \phantom{a}\quad -\left(\left(\bar{f}^T_{z}\bar{g}^T_{y}\right)^{-1} \zeta^z,\frac{d g(Y)}{dt}\right) - \left(\bar{g}^T_{y}\left(\bar{f}^T_{z}\bar{g}^T_{y}\right)^{-1} \zeta^z,f(y,z)\right)+ \left(\bar{g}^T_{y}\left(\bar{f}^T_{z}\bar{g}^T_{y}\right)^{-1} \zeta^z,\dot{Y}\right)$,
    \end{enumerate}
    where all the functions are evaluated at a fixed but arbitrary $t \in [0,T]$.
\end{lemma}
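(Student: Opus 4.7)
The plan is to derive all three identities from a small set of elementary observations specific to the Hessenberg index-2 setting, and then to combine them using the projection structure of $P$. First, because $\bar{g}_z = 0$ in this case, \cref{eq:property_adj_1_property_2} together with the true-solution constraint $g(y,t)=0$ yields the pointwise identity $\bar{g}_y\,e^{(y)} = -g(Y)$. Second, \cref{eq:property_adj_1_property_1} gives $\bar{f}_y\,e^{(y)} + \bar{f}_z\,e^{(z)} = f(y,z) - f(Y,Z)$. Third, differentiating the first observation in time and using $\dot{y}=f(y,z)$ produces
\[
\frac{d\bar{g}_y}{dt}\,e^{(y)} \;=\; -\frac{d g(Y)}{dt} \;-\; \bar{g}_y\bigl(f(y,z) - \dot{Y}\bigr).
\]
Each identity in the lemma is a pointwise statement at a fixed $t$, so no integration or boundary terms enter.

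For the first identity I would expand $P^T = I - \bar{g}_y^T(\bar{f}_z^T\bar{g}_y^T)^{-1}\bar{f}_z^T$, transpose the $\bar{g}_y^T$ inside the resulting inner product onto $e^{(y)}$, and apply the first observation to substitute $\bar{g}_y e^{(y)} = -g(Y)$. For the second identity, the same expansion of $P^T$ splits the left-hand side into two pieces. In the first piece I would transpose $\bar{f}_y^T$ onto $e^{(y)}$ and invoke the second observation to rewrite $\bar{f}_y\,e^{(y)} = f(y,z) - f(Y,Z) - \bar{f}_z\,e^{(z)}$; the crucial collapse is that $\bar{f}_z^T\bar{g}_y^T$ cancels its inverse, turning the $\bar{f}_z\,e^{(z)}$ contribution into the clean term $(\zeta^z, e^{(z)})$. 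In the second piece I would transpose to expose a $\bar{g}_y\,e^{(y)}$ factor and apply the first observation again, leaving a $(\bar{f}_z^T\bar{g}_y^T)^{-1}\bar{f}_z^T\bar{f}_y^T\bar{g}_y^T(\bar{f}_z^T\bar{g}_y^T)^{-1}\zeta^z$ paired with $g(Y)$. For the third identity, I would take the inner product of the displayed product-rule relation with $(\bar{f}_z^T\bar{g}_y^T)^{-1}\zeta^z$ and move $\bar{g}_y$ and its time derivative across by transposition.

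The main obstacle will be the second identity. The chain of inverses and transposes $(\bar{f}_z^T\bar{g}_y^T)^{-1}$ appearing both inside and outside the inner products makes it easy to lose track of cancellations. The clean way to proceed is to abbreviate $u := (\bar{f}_z^T\bar{g}_y^T)^{-1}\zeta^z$, carry out both expansions in terms of $u$ alone, and only at the very end reinstate the explicit expression for $u$, using $\bar{f}_z^T\bar{g}_y^T\,u = \zeta^z$ to collapse the appropriate term. The third identity is comparatively routine, provided one carefully distinguishes $\tfrac{d}{dt}g(y) \equiv 0$ from the generically nonzero $\tfrac{d}{dt}g(Y)$.
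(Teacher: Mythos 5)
Your approach matches the paper's proof essentially step for step: expand $P^T = I - \bar{g}_y^T(\bar{f}_z^T\bar{g}_y^T)^{-1}\bar{f}_z^T$, move the linearized operators across the inner product onto $e^{(y)}$, and apply the three pointwise facts $\bar{g}_y e^{(y)} = -g(Y)$ (from \cref{eq:property_adj_1_property_2} with $\bar{g}_z=0$ and $g(y)=0$), $\bar{f}_y e^{(y)} = f(y,z)-f(Y,Z)-\bar{f}_z e^{(z)}$, and the product rule $\tfrac{d\bar{g}_y}{dt}e^{(y)} = \tfrac{d}{dt}\bigl[\bar{g}_y e^{(y)}\bigr] - \bar{g}_y\bigl(f(y,z)-\dot{Y}\bigr)$, exactly as the appendix does. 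One thing worth flagging: carrying your part-(1) plan through honestly yields $\left(P^T\zeta^y,e^{(y)}\right) = \left(\zeta^y,e^{(y)}\right) + \left((\bar{f}_z^T\bar{g}_y^T)^{-1}\bar{f}_z^T\zeta^y,\, g(Y)\right)$, with the $\left(\zeta^y,e^{(y)}\right)$ term surviving the expansion of the identity piece of $P^T$, and nothing in the argument cancels it; the paper's own appendix proof and the way the result is invoked in \cref{thm:adj2_qoi_terminal_time_both_diff_alg_vari} both carry this extra term, so the printed statement of part (1) of the lemma is missing it and your derivation is the correct one.
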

The proof of the lemma is given in appendix \ref{sec:app_proofs}.

\begin{theorem}\label{thm:adj2_qoi_terminal_time_both_diff_alg_vari}
% Let the the QoI $\QFT{Y}{Z}$ at $t=T$ depends on both $y,z$. Let $X = [Y,Z]^T$ be the approximate solution to the Hessenberg index-2 DAE \cref{eq:index_2_dae_eq} and $\phi^{(x)}=[\phi^{(y)}, \phi^{(z)}]^T$ be the solution to the associated adjoint problem 
Consider \cref{eq:adj_all_time} with $\psi^y(t)=0,\,\psi^z(t)=0$, and the initial condition for $\phi^{(y)}(T)$ given by %\cref{eq:phi_T_at_terminal_when_fzgy_not_constant}. 
\begin{align}\label{eq:phi_T_at_terminal_when_fzgy_not_constant}
    \phi^{(y)}(T) &= P^T\left[ \zeta^y - \bar{f}^T_{y} \bar{g}^T_{y}(\bar{f}^T_{z}\bar{g}^T_{y})^{-1}\zeta^z  - \frac{d \bar{g}^T_{y}}{dt}\left(\bar{f}^T_{z}\bar{g}^T_{y}\right)^{-1} \zeta^z \right]_{t=T}
\end{align}
 
Then the error in the computed QoI $\QFT{Y}{X}$, is given by
% \begin{align}\label{eq:error_dae2_both_dif_alg_terminal_time}
%     &\mathcal{Q}_{[T]}(e^{(x)})\notag\\
%     &= \left(\phi^{(y)}(0),e^{(y)}(0)\right) + \langle \phi^{(y)}, f(Y,Z) - \dot{Y}\rangle + \langle \phi^{(z)},g(Y)\rangle - \left((\bar{f}^T_{z}\bar{g}^T_{y})^{-1}\bar{f}_z^T\zeta^y,g(Y)\right)\Bigr|_{t=T}\notag\\ 
%         &\quad\quad - \left(\bar{g}^T_{y}(\bar{f}^T_{z}\bar{g}^T_{y})^{-1}\zeta^z ,f(Y,Z)\right)\Bigr|_{t=T} + \left(\bar{g}^T_{y}(\bar{f}^T_{z}\bar{g}^T_{y})^{-1}\zeta^z ,\dot{Y}\right)\Bigr|_{t=T}\notag\\
%         &\quad\quad+ \left((\bar{f}^T_{z}\bar{g}^T_{y})^{-1}\bar{f}_z^T\bar{f}^T_{y}\bar{g}^T_{y}(\bar{f}^T_{z}\bar{g}^T_{y})^{-1}\zeta^z,g(Y)\right)\Bigr|_{t=T} - \left(\left(\bar{f}^T_{z}\bar{g}^T_{y}\right)^{-1} \zeta^z,\frac{d g(Y)}{dt}\right)\Bigr|_{t=T}.\notag\\
% \end{align}
\begin{align}\label{eq:error_dae2_both_dif_alg_terminal_time}
    \eQFT
    &= \left(\phi^{(y)}(0),e^{(y)}(0)\right) + \langle \phi^{(y)}, f(Y,Z) - \dot{Y}\rangle + \langle \phi^{(z)},g(Y)\rangle\notag\\
    &\quad\quad - \left(\bar{g}^T_{y}(\bar{f}^T_{z}\bar{g}^T_{y})^{-1}\zeta^z ,f(Y,Z)-\dot{Y}\right)\Bigr|_{t=T} -\left(\left(\bar{f}^T_{z}\bar{g}^T_{y}\right)^{-1} \zeta^z,\frac{d g(Y)}{dt}\right)\Bigr|_{t=T}.\notag\\
\end{align}
\end{theorem}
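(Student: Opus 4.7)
The plan is to reduce the terminal-time QoI to the claimed form by chaining \cref{lem:adj_error} with \cref{lem:tech_results}. I will first specialize the general adjoint error identity to the Hessenberg index-2 setting with $\psi \equiv 0$, then substitute the prescribed terminal value of $\phi^{(y)}(T)$ and reduce each resulting inner product.

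As the starting point, I would invoke \cref{lem:adj_error} with $\psi = 0$. The Hessenberg structure gives $\bar{g}_z = 0$ and $g(Y,Z) = g(Y)$, so \cref{eq:error_dae1_differential} collapses to $(\phi^{(y)}(T), e^{(y)}(T)) = (\phi^{(y)}(0), e^{(y)}(0)) + \langle \phi^{(y)}, f(Y,Z) - \dot{Y}\rangle + \langle \phi^{(z)}, g(Y)\rangle$. The task then reduces to showing that the left-hand side, when evaluated using the prescribed $\phi^{(y)}(T)$ from \cref{eq:phi_T_at_terminal_when_fzgy_not_constant}, equals $\mathcal{Q}_{[T]}(e^{(x)})$ plus exactly the computable boundary corrections at $t=T$ that appear in the statement.

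Next, I would substitute \cref{eq:phi_T_at_terminal_when_fzgy_not_constant} into $(\phi^{(y)}(T), e^{(y)}(T))$, splitting it into three pieces, one per summand inside the bracket. Parts 1 and 2 of \cref{lem:tech_results} apply directly to the first two pieces and, after using $\bar{g}_y e^{(y)} = -g(Y)$ (which follows from \cref{eq:property_adj_1_property_2} together with $g(y)=0$), produce the QoI contributions $(\zeta^y, e^{(y)}(T))$ and $(\zeta^z, e^{(z)}(T))$ together with the $((\bar{f}^T_z\bar{g}^T_y)^{-1}\bar{f}^T_z\zeta^y, g(Y))\big|_{t=T}$, $(\bar{g}^T_y(\bar{f}^T_z\bar{g}^T_y)^{-1}\zeta^z, f(Y,Z))\big|_{t=T}$, and $((\bar{f}^T_z\bar{g}^T_y)^{-1}\bar{f}^T_z\bar{f}^T_y\bar{g}^T_y(\bar{f}^T_z\bar{g}^T_y)^{-1}\zeta^z, g(Y))\big|_{t=T}$ boundary terms. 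For the third piece, I would expand $P^T = I - \bar{g}^T_y(\bar{f}^T_z\bar{g}^T_y)^{-1}\bar{f}^T_z$ and then apply part 3 of \cref{lem:tech_results} to recover the $\frac{d g(Y)}{dt}$ and $\dot{Y}$ boundary terms.

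The crucial cancellation is that the $\left(\bar{g}^T_y(\bar{f}^T_z\bar{g}^T_y)^{-1}\zeta^z, f(y,z)\right)\big|_{t=T}$ contributions arising from parts 2 and 3 of \cref{lem:tech_results} carry opposite signs and annihilate, so that only $f(Y,Z)$ survives in the final expression. After this cancellation the claimed identity follows by solving for $\mathcal{Q}_{[T]}(e^{(x)})$ and rearranging. The main obstacle I anticipate is the algebraic bookkeeping: six boundary terms must be tracked through the three applications of \cref{lem:tech_results}, and one must verify that every non-computable quantity (namely $y$, $z$, and their time derivatives other than $\dot{Y}$ and $\frac{dg(Y)}{dt}$) cancels. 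Once this is confirmed, collecting the surviving terms reproduces \cref{eq:error_dae2_both_dif_alg_terminal_time}.
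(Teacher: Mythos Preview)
Your proposal is correct and follows essentially the same route as the paper: specialize \cref{lem:adj_error} to $\psi=0$ in the Hessenberg index-2 setting, substitute the prescribed $\phi^{(y)}(T)$, split $(\phi^{(y)}(T),e^{(y)}(T))$ into the three summands, apply the three parts of \cref{lem:tech_results}, let the two $\bigl(\bar g_y^{T}(\bar f_z^{T}\bar g_y^{T})^{-1}\zeta^z,f(y,z)\bigr)$ contributions cancel, and collect the surviving computable boundary terms. The only cosmetic difference is that the paper applies part~3 of \cref{lem:tech_results} directly to the third summand without first expanding $P^T$, whereas you propose to expand $P^T$ there; the underlying manipulations are the same.
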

\begin{proof}
    Starting from  equation \cref{eq:plug_bc_for_index1_phi_T} we have
    \begin{align}\label{eq:plug_bc_for_index2_phi_T}
        \left(\phi^{(y)}(T),y(T)- Y(T)\right) &= \left(\phi^{(y)}(0),e^{(y)}(0)\right) + \langle \phi^{(y)}, f(Y,Z) - \dot{Y}\rangle + \langle \phi^{(z)},g(Y)\rangle.
    \end{align}
    Focusing on the left hand side of \cref{eq:plug_bc_for_index2_phi_T} by combining it with \cref{eq:phi_T_at_terminal_when_fzgy_not_constant} we arrive at,
    \begin{align}\label{eq:plug_k's}
        &\left(\phi^{(y)}(T),e^{(y)}(T)\right)\notag\\
        &= \left(P^T\left[ \zeta^y - \bar{f}^T_{y} \bar{g}^T_{y}(\bar{f}^T_{z}\bar{g}^T_{y})^{-1}\zeta^z  - \frac{d \bar{g}^T_{y}}{dt}\left(\bar{f}^T_{z}\bar{g}^T_{y}\right)^{-1} \zeta^z \right],e^{(y)}\right)\Bigr|_{t=T}\notag,\\
        &= \left(P^T\zeta^y,e^{(y)}(T)\right)\Bigr|_{t=T} + \left(-P^T\bar{f}^T_{y} \bar{g}^T_{y}(\bar{f}^T_{z}\bar{g}^T_{y})^{-1}\zeta^z , e^{(y)}(T)\right)\Bigr|_{t=T}\notag\\
        &\quad\quad\quad\quad - \left( \frac{d \bar{g}^T_{y}}{dt}\left(\bar{f}^T_{z}\bar{g}^T_{y}\right)^{-1} \zeta^z,e^{(y)}(T)\right)\Bigr|_{t=T}
        % \notag,\\
        % &= K_1 + K_2 - K_3.
    \end{align}
    Applying \cref{lem:tech_results}  to the terms in \cref{eq:plug_k's}, 
    \begin{align}
        &\left(\phi^{(y)}(T),e^{(y)}(T)\right)\notag\\
        &= \left(\zeta^y,e^{(y)}(T)\right) + \left((\bar{f}^T_{z}\bar{g}^T_{y})^{-1}\bar{f}_z^T\zeta^y,g(Y)\right)\Bigr|_{t=T} -\left(\bar{g}^T_{y}(\bar{f}^T_{z}\bar{g}^T_{y})^{-1}\zeta^z ,f(y,z)\right)\Bigr|_{t=T}\notag\\
        &\quad + \left(\bar{g}^T_{y}(\bar{f}^T_{z}\bar{g}^T_{y})^{-1}\zeta^z ,f(Y,Z)\right)\Bigr|_{t=T} +\left(\zeta^z ,e^{(z)}(T)\right) - \left((\bar{f}^T_{z}\bar{g}^T_{y})^{-1}\bar{f}_z^T\bar{f}^T_{y}\bar{g}^T_{y}(\bar{f}^T_{z}\bar{g}^T_{y})^{-1}\zeta^z,g(Y)\right)\Bigr|_{t=T}\notag\\
        &\quad + \left(\left(\bar{f}^T_{z}\bar{g}^T_{y}\right)^{-1} \zeta^z,\frac{d g(Y)}{dt}\right)\Bigr|_{t=T} + \left(\bar{g}^T_{y}\left(\bar{f}^T_{z}\bar{g}^T_{y}\right)^{-1} \zeta^z,f(y,z)\right)\Bigr|_{t=T} - \left(\bar{g}^T_{y}\left(\bar{f}^T_{z}\bar{g}^T_{y}\right)^{-1} \zeta^z,\dot{Y}\right)\Bigr|_{t=T},\notag\\
        &= \eQFT+ \left(\bar{g}^T_{y}(\bar{f}^T_{z}\bar{g}^T_{y})^{-1}\zeta^z ,f(Y,Z)-\dot{Y}\right)\Bigr|_{t=T} + \left(\left(\bar{f}^T_{z}\bar{g}^T_{y}\right)^{-1} \zeta^z,\frac{d g(Y)}{dt}\right)\Bigr|_{t=T}\notag,\\
    \end{align}
    % $\left(\zeta^y,e^{(y)}(T)\right) +\left(\zeta^z ,e^{(z)}(T)\right)$
    where we used $g(Y)|_{t=T=t_N} =  0$ in the last step. Combining this  equation  with \cref{eq:plug_bc_for_index2_phi_T} proves the result.
    % \begin{align}
    %     &\left(\zeta^y,e^{(y)}(T)\right) +\left(\zeta^z ,e^{(z)}(T)\right)\notag\\
    %     &= \left(\phi^{(y)}(0),e^{(y)}(0)\right) + \langle \phi^{(y)}, f(Y,Z) - \dot{Y}\rangle + \langle \phi^{(z)},g(Y)\rangle - \left((\bar{f}^T_{z}\bar{g}^T_{y})^{-1}\bar{f}_z^T\zeta^y,g(Y)\right)\Bigr|_{t=T}\notag\\ 
    %     &\quad\quad - \left(\bar{g}^T_{y}(\bar{f}^T_{z}\bar{g}^T_{y})^{-1}\zeta^z ,f(Y,Z)\right)\Bigr|_{t=T} + \left(\bar{g}^T_{y}(\bar{f}^T_{z}\bar{g}^T_{y})^{-1}\zeta^z ,\dot{Y}(T)\right)\Bigr|_{t=T}\notag\\
    %     &\quad\quad + \left((\bar{f}^T_{z}\bar{g}^T_{y})^{-1}\bar{f}_z^T\bar{f}^T_{y}\bar{g}^T_{y}(\bar{f}^T_{z}\bar{g}^T_{y})^{-1}\zeta^z,g(Y)\right)\Bigr|_{t=T} - \left(\left(\bar{f}^T_{z}\bar{g}^T_{y}\right)^{-1} \zeta^z,\frac{d g(Y)}{dt}\right)\Bigr|_{t=T}.\notag\\
    % \end{align}
\end{proof}

% If the QoI $\QFT{Y}{Z}$ at $t=T$ depends only on the differential components $y$ of $x$, then we use the same adjoint system for Hessenberg index-2 DAE \cref{eq:index_2_dae_eq} given by \cref{eq:adj_all_time} with $\psi(t)=0$, and the initial condition for $\phi^{(y)}(T)$ (obtained by setting $\zeta^z=0$ in \cref{eq:phi_T_at_terminal_when_fzgy_not_constant}) is
% \begin{equation}\label{eq:adj_index_2_terminal_time_q_on_y_onlhy}
% \phi^{(y)}(T) = \left( I - \bar{g}^T_{y}(\bar{f}^T_{z}\bar{g}^T_{y})^{-1}\bar{f}^T_{z}\right)\zeta^y\Bigr|_{t=T}.
% \end{equation}

\begin{remark}
    Quite often the QoI $\QFT{y}{z}$ at $t=T$ depends on only $y$, i.e.,  $\QFT{y}{z} = \left(y(T), \zeta^y\right)$. In this case we have $\zeta^{z} = 0$, and the initial condition simplifies to 
    % \eqref{eq:phi_T_at_terminal_when_fzgy_not_constant} simplifies to,
    \begin{equation}\label{eq:adj_index_2_terminal_time_q_on_y_onlhy}
    \phi^{(y)}(T) = \left( I - \bar{g}^T_{y}(\bar{f}^T_{z}\bar{g}^T_{y})^{-1}\bar{f}^T_{z}\right)\zeta^y\Bigr|_{t=T},
    \end{equation}
    while the error representation \eqref{eq:error_dae2_both_dif_alg_terminal_time} takes the form,
    \begin{align}\label{eq:error_dae2_differ_terminal_time}
    \eQFT &= \left( e^{(y)}(0),\phi^{(y)}(0) \right) + \langle \phi^{(y)}, f(Y,Z) - \dot{Y}  \rangle + \langle \phi^{(z)}, g(Y)  \rangle.
\end{align}
\end{remark}

% \begin{corollary}\label{cor:adj2_qoi_terminal_time_diff_vari}
% Let the QoI $\QFT{y}{z}$ at $t=T$ depends on only $y$, i.e.,  $\QFT{y}{z} = \left(y(T), \zeta^y\right)$. 
% %Let $X = [Y,Z]^T$ be the approximate solution to the Hessenberg index-2 DAE \cref{eq:index_2_dae_eq} and $\phi^{(x)}=[\phi^{(y)}, \phi^{(z)}]^T$ be the solution to the associated adjoint problem \cref{eq:adj_all_time} 
% Consider \cref{eq:adj_all_time} with $\psi^y(t)=0,\,\psi^z(t)=0$, and the adjoint initial condition for $\phi^{(y)}(T)$ given by 
% \begin{equation}\label{eq:adj_index_2_terminal_time_q_on_y_onlhy}
% \phi^{(y)}(T) = \left( I - \bar{g}^T_{y}(\bar{f}^T_{z}\bar{g}^T_{y})^{-1}\bar{f}^T_{z}\right)\zeta^y\Bigr|_{t=T}.
% \end{equation}
% % \cref{eq:adj_index_2_terminal_time_q_on_y_onlhy}. 
% Then
% the error in the computed QoI $\left( Y(T), \zeta^{y} \right)$, is given by

% \begin{align}\label{eq:error_dae2_differ_terminal_time}
%     \left( e^{(y)}(T), \zeta^{y} \right) &= \left( e^{(y)}(0),\phi^{(y)}(0) \right) + \langle \phi^{(y)}, f(Y,Z) - \dot{Y}  \rangle + \langle \phi^{(z)}, g(Y)  \rangle.
% \end{align}
% \end{corollary}
% \begin{proof}
% The proof directly follows from \cref{thm:adj2_qoi_terminal_time_both_diff_alg_vari} by simply setting $\zeta^{z}=0$.
% \end{proof}

\section{Error Analysis of DAEs by Adjoint to the Index-Reduced ODE system}\label{sec:adj_error_analysis_sec_adj_ode}
A DAE corresponds to an ODE (index-0 DAE) with an invariant \cite{computermethodpetzold}. We take advantage of this  fact to utilize classical error analysis techniques to form error estimates for DAEs. Note that the corresponding ODE system is never solved numerically, and is only an analytical device used to derive error representations. We refer to the resulting error analysis as the Adjoint ODE analysis.

\subsection{Adjoint System}\label{subsec:ode_adjoint}
 For the DAE \cref{eq:dae_equation} the corresponding ODE is  
% \jhc{We often talk of ``Reduced ODE" later, in two different contradictory contexts. We are going to remove one of those. But we still use the word reduced ODE quite a bit, so we need to define what we mean by it over here.  In particular, refer to the equations defining the function h used later for both index 1 and index 2. And is Reduced the correct word that is used in literature? Another word could be corresponding. But we should use the word commonly used.}

\begin{subequations}\label{eq:ode_reduced_form}
    \begin{align}
        \dot{y} &= f(y,z,t) \label{eq:ode_diff_vari},\\
        \dot{z} &= h(y,z,t) \label{eq:ode_algebraic}.
    \end{align}
\end{subequations}
% \jhc{I reworded this, please have a look.}
with an invariant $g(y,z,t)=0$, and an additional invariant \cref{eq:hidden_constraint} for index-2 DAE.
The function $h(y,z,t)$ is defined by \cref{eq:index_1_zdot} if the DAE is  index-1 and  by \cref{eq:zdot_for_index_2} if the DAE is index-2,
%For index-2 the invariants are \cref{eq:index_2_dae_alg,eq:hidden_constraint}, and the function $h(y,z,t)$ is defined by \cref{eq:zdot_for_index_2}.
\begin{equation}
h = 
\begin{cases}
-[g_{z}]^{-1}\left [g_{y}f + g_{t}\right], & \text{if DAE is index-1},\\
-\left(g_yf_z\right)^{-1}\displaystyle\left(f^Tg_{yy}f +  g_yf_yf + 2g_{yt}f + g_yf_t + g_{tt}\right),  & \text{if DAE is index-2}.
\end{cases}
\label{eq:h_definition}
\end{equation}
Note that \cref{eq:dae_equation,eq:ode_reduced_form} have the same solution the solution $[y, \, z]^T$.
We define the corresponding adjoint ODE system for \cref{eq:ode_reduced_form}  as follows:
\begin{subequations}\label{eq:adjoint_ode}
%     \begin{align}
%         -\dot{\nu}^{(y)}(t) &= \bar{f}_y^T \nu^{(y)}(t) + \bar{h}_y^T \nu^{(z)}(t) + \psi^y(t), \quad t\in [0,T),\quad \psi^y(t)\in [\mathcal{L}^2(0, T]]^n\label{eq:ode_adj_diff_equation},\\
% -\dot{\nu}^{(z)}(t) &= \bar{f}_z^T \nu^{(y)}(t) + \bar{h}_z^T \nu^{(z)}(t) + \psi^z(t), \quad t\in [0,T),\quad \psi^z(t)\in [\mathcal{L}^2(0, T]]^n\label{eq:ode_adj_alg_equation},
%     \end{align}
    \begin{align}
        -\dot{\nu}^{(y)} &= \bar{f}_y^T \nu^{(y)} + \bar{h}_y^T \nu^{(z)} + \psi^y, \quad t\in [0,T)\label{eq:ode_adj_diff_equation},\\
-\dot{\nu}^{(z)} &= \bar{f}_z^T \nu^{(y)} + \bar{h}_z^T \nu^{(z)} + \psi^z, \quad t\in [0,T)\label{eq:ode_adj_alg_equation},
    \end{align}
\end{subequations}
with the initial condition $\nu^{(y)}(T)$, and $\nu^{(z)}(T)$ unspecified, for the moment.
% where $\psi(t)=[\psi^y(t),\psi^z(t)]^T\in \mathcal{L}^2(0, T]$, and $\zeta = [\zeta^y, \zeta^z]^T\in \mathcal{L}^2(0, T]$. 
Here
$$\displaystyle \bar{h}_y = \int_0^1 \frac{\partial h(\tilde{y},\tilde{z})}{ \partial y}ds\in \mathbb{R}^{m\times n},\quad\quad \displaystyle \bar{h}_z = \int_0^1 \frac{\partial h(\tilde{y},\tilde{z})}{ \partial z}ds\in \mathbb{R}^{m\times m},$$
for $\tilde{y}=sy + (1-s)Y$, and $\tilde{z}=sz + (1-s)Z$. 
Similar to  \cref{eq:property_adj_1_property_1} and \cref{eq:property_adj_1_property_2} we have, 
\begin{align}\label{eq:property_odeadj_property_1}
    \bar{h}_y(y-Y)+\bar{h}_z(z-Z)=h(y,z)-h(Y,Z).
\end{align}

\subsection{Error Analysis for Semi-explicit Index-1 and Hessenberg Index-2 DAE}

\begin{lemma}\label{lem:err_using_ode_adj}
    % Let $X = [Y,Z]^T$ be the approximate solution to \cref{eq:dae_equation} and $\nu^{(x)}=[\nu^{(y)}, \nu^{(z)}]^T$ be the solution to the associated adjoint ODE problem \cref{eq:adjoint_ode} with the initial condition $\nu^{(x)}(T)=\zeta$, then
    We have,
\begin{align}\label{eq:err_da1_both_al_diff_vari}
    &\eQCI + \eQFT\notag\\
    &= \left( \nu^{(y)}(0),e^{(y)}(0) \right)
    + \left( \nu^{(z)}(0),e^{(z)}(0) \right) +\langle \nu^{(y)}, f(Y,Z) - \dot{Y}  \rangle +\langle \nu^{(z)}, h(Y,Z) - \dot{Z}  \rangle.
\end{align}
\end{lemma}

\begin{proof}
   The proof is standard see \cite{ODE_Eriksson_Estep_Hansbo_Johnson_1995}.
\end{proof}
% This lemma will be useful for our error representations using Adjoint ODE approach. 
The Adjoint ODE approach is conceptually simpler than the Adjoint DAE approach in the sense that only a single analysis is required for both Index-1 and Index-2 DAEs. However, we do need slightly different analyses for the two QoIs. These results are given in \cref{thm:err_using_ode_adj_diff} and \cref{thm:terminal_error_index1_from_ode_adj} below.

% \subsubsection{Adjoint ODE Technique for Semi-explicit Index-1 and Hessenberg Index-2 DAEs}\label{subsubsec:reduced_ode_from_index_1}
%We here present another approach for error analysis of DAE using corresponding adjoint ODE system of semi-explicit index-1 and Hessenberg index-2 DAEs (see \cref{subsec:ode_adjoint} ).

\begin{theorem}\label{thm:err_using_ode_adj_diff}
 % Let $X = [Y,Z]^T$ be the approximate solution to the semi-explicit index-1 DAE \cref{eq:dae_equation}/ Hessenberg index-2 DAE \cref{eq:index_2_dae_eq} and $\nu^{(x)}=[\nu^{(y)}, \nu^{(z)}]^T$ be the solution to the associated adjoint ODE problem \cref{eq:adjoint_ode} with the initial condition given by $\nu^{(x)}(T)=\zeta=0$. Then the error in the QoI $\mathcal{Q}_{[0,T]}(X)$, is given by
 Let $\nu^{(y)}(T)=0$, and $\nu^{(z)}(T)=0$ in the adjoint problem \cref{eq:adjoint_ode}.
Then the error in the computed QoI $\QCI{Y}{Z}$ is,
\begin{align}\label{eq:err_commulative_diff_vari}
    \eQCI
    &= \left( \nu^{(y)}(0),e^{(y)}(0) \right)
    + \left( \nu^{(z)}(0),e^{(z)}(0) \right)
    +\langle \nu^{(y)}, f(Y,Z) - \dot{Y}  \rangle +\langle \nu^{(z)}, h(Y,Z) - \dot{Z}  \rangle.
\end{align}
\end{theorem}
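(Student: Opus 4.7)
The plan is to deduce the result as a one-step specialization of \cref{lem:err_using_ode_adj}. That lemma bundles the cumulative QoI contribution $\langle e^{(y)},\psi^y\rangle + \langle e^{(z)},\psi^z\rangle$ together with the terminal contributions $(e^{(y)}(T),\zeta^y) + (e^{(z)}(T),\zeta^z)$ on its left-hand side, and inspection of a standard adjoint derivation (integration by parts against \cref{eq:adjoint_ode}, combined with the mean-value identities \cref{eq:property_adj_1_property_1} and \cref{eq:property_odeadj_property_1}) shows that the weights $\zeta^y,\zeta^z$ in the lemma are precisely the adjoint terminal data $\nu^{(y)}(T),\nu^{(z)}(T)$ carried through the boundary term produced by integration by parts. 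Thus imposing the terminal condition $\nu^{(x)}(T)=0$ in \cref{eq:adjoint_ode} amounts to taking $\zeta^y=\zeta^z=0$ in the lemma.

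Substituting this into \cref{lem:err_using_ode_adj} annihilates both boundary inner products on the left-hand side, and what remains is exactly $\langle e^{(y)},\psi^y\rangle + \langle e^{(z)},\psi^z\rangle$ on the left and the right-hand side of \cref{eq:err_commulative_diff_vari} on the right. By the component-wise expansion of the cumulative QoI given in \cref{eq:qoi_over_time_components}, the retained left-hand side is nothing other than $\mathcal{Q}_{[0,T]}(e^{(x)})$, which completes the derivation.

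I expect no real obstacle in executing this plan: all of the nontrivial work, namely the backward-in-time integration by parts against the adjoint system and the linearization via the fundamental theorem of calculus, has already been performed inside the proof of \cref{lem:err_using_ode_adj}. The write-up will therefore reduce to a single-sentence corollary of that lemma, directly paralleling the way \cref{thm:adj_1_qoi_diff_vari_all_time} was obtained from \cref{lem:adj_error} by the analogous substitution $\phi^{(y)}(T)=0$ in the Adjoint-DAE analysis of \cref{sec:adj_error_analysis_sec_adj_dae}.
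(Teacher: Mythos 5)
Your proposal is correct and matches the paper's own proof, which simply cites \cref{lem:err_using_ode_adj} and sets $\nu^{(x)}(T)=0$. Your extra observation that the terminal weights $\zeta^y,\zeta^z$ in the lemma are exactly the adjoint data $\nu^{(y)}(T),\nu^{(z)}(T)$, so that the boundary condition annihilates the terminal inner products, is precisely the implicit reasoning behind the paper's one-line argument.
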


\begin{proof}
The proof directly follows from \cref{lem:err_using_ode_adj} by setting the initial condition $\nu^{(y)}(T)=0$, and $\nu^{(z)}(T)=0$.
\end{proof}

% Now, if the QoI $\QFT{Y}{Z}$ at $t=T$ depends on both $y$, and $z$, then we define the corresponding ODE adjoint system for \cref{eq:ode_reduced_form} same as \cref{eq:adjoint_ode} with $\psi(t)=0$.

\begin{theorem}\label{thm:terminal_error_index1_from_ode_adj}
% Let the QoI $\QFT{Y}{Z}$ at $t=T$ depends on both $y$, and $z$. Let $X = [Y,Z]^T$ be the approximate solution to the semi-explicit index-1 DAE \cref{eq:dae_equation}/ Hessenberg index-2 DAE \cref{eq:index_2_dae_eq} and $\nu^{(x)}=[\nu^{(y)}, \nu^{(z)}]^T$ be the solution to the associated adjoint ODE problem \cref{eq:adjoint_ode} with $\psi(t)=0$, and the initial condition given by $\nu^{(x)}(T)=\zeta$. Then the error in the QoI $\mathcal{Q}_{[T]}(X)$, is given by
Let $\psi^y(t)=0,\, \psi^z(t)=0$ in \cref{eq:adjoint_ode}, and the initial condition is given by $\nu^{(y)}(T)=\zeta^y,\, \nu^{(z)}(T)=\zeta^z$. Then the error in the computed QoI $\QFT{Y}{Z}$, is given by
    \begin{align}\label{eq:err_terminal_diff_vari}
    \eQFT
    &= \left( \nu^{(y)}(0),e^{(y)}(0) \right) + \left( \nu^{(z)}(0),e^{(z)}(0) \right) +\langle \nu^{(y)}, f(Y,Z) - \dot{Y}  \rangle +\langle \nu^{(z)}, h(Y,Z) - \dot{Z}  \rangle.
\end{align}
\end{theorem}
\begin{proof}
    The proof follows directly from \cref{lem:err_using_ode_adj} by substituting  $\psi^y(t)=0$, and $\psi^z(t)=0$.
\end{proof}

\section{Error Estimates}
\label{sec:err_estimates}
% We call the DAE \cref{eq:adj_all_time} as the Adjoint DAE and the ODE \cref{eq:adjoint_ode} as the Adjoint ODE. 
The error representations in \cref{thm:adj_1_qoi_diff_vari_all_time,thm:adj1_qoi_terminal_time_diff_algeb_vari,thm:error_dae_index2_updated,thm:adj2_qoi_terminal_time_both_diff_alg_vari} (based on Adjoint DAE) and \cref{thm:err_using_ode_adj_diff,thm:terminal_error_index1_from_ode_adj} (based on Adjoint ODE) are exact, however, they involve the unknown true adjoint solution. Once we replace this adjoint solution by its numerical approximation (as discussed below in \cref{subsec:ns_adj_dae_and_ode}), we obtain an error estimate from the corresponding error representation. 
More precisely, the error estimates based on Adjoint DAE are the right-hand sides of \cref{eq:error_dae1_both_diff_alg,eq:error_dae1_differ_algeb_terminal_time,eq:error_dae_index2_differential_updated,eq:error_dae2_both_dif_alg_terminal_time,eq:error_dae2_differ_terminal_time} 
with  $\phi^{(y)}$ and $\phi^{(z)}$  replaced by their numerical approximations $\overline{\phi}^{(y)}$ and $\overline{\phi}^{(z)}$.
Similarly, error estimates based on the Adjoint ODE are the right-hand sides of \cref{eq:err_commulative_diff_vari,eq:err_terminal_diff_vari}.
Since the error estimate is exactly the same as the error representation, except with the adjoint solution replaced by its numerical approximation, we avoid re-writing the error estimates separately.
However, now our error estimates no longer capture the exact error, so their accuracy must be quantified. 
This aspect is discussed in \cref{subsec:effectivity_ratio}. 
Finally, in \cref{sec:comparison_approaches} we discuss the relative advantages and disadvantages of the error estimates formed from the Adjoint DAE and Adjoint ODE analyses.

\subsection{Numerical Solution of Adjoint DAE and Adjoint ODE}\label{subsec:ns_adj_dae_and_ode}
% \jhc{update needs here} At cursory glance, our error representations (for example, \cref{eq:error_dae1_both_diff_alg}) appear to give the error of the quantity of interest without knowledge of the true solution to the original DAE \cref{eq:dae_equation}. 
We now describe the numerical approximation to the adjoint solution in either the DAE \cref{eq:adj_all_time} or ODE \cref{eq:adjoint_ode}. The adjoint equations \cref{eq:adj_all_time,eq:adjoint_ode} technically require the true solution in the linearized operators $\bar{f}_{y},\bar{g}_{y}, \bar{g}_{z}$ and $\bar{h}_{y}$. However, in forming the numerical approximation of the adjoint problem, we substitute the approximate solution $[Y,Z]^T$ for the true solution $[y,z]^T$ in the linearized operators as is standard in adjoint based error estimation~\cite{estep2000estimating,chaudhry2017posteriori,chaudhry2013posteriori,collins2014posteriori}. 
%Assuming that  $y\approx Y$ and $z\approx Z$, we can 
That is, we approximate $\bar{f}_{y}, \bar{f}_{z}, \bar{g}_{y}$, $\bar{g}_{z}$, $\bar{h}_{y}$, and $\bar{h}_{z}$ (see \cref{subsec:adjoint_dae_general,subsec:ode_adjoint} for the definitions of these operators) as follows
$$\displaystyle \bar{f}_{y}\approx \frac{\partial f}{\partial y}\Bigr|_{y=Y,z=Z},\quad\quad \displaystyle \bar{f}_{z}\approx \frac{\partial f}{\partial z}\Bigr|_{y=Y,z=Z}, \quad\quad \displaystyle \bar{g}_{y}\approx \frac{\partial g}{\partial y}\Bigr|_{y=Y,z=Z}, $$
$$\displaystyle \bar{g}_{z}\approx \frac{\partial g}{\partial z}\Bigr|_{y=Y,z=Z},\quad\quad, \displaystyle \bar{h}_{y}\approx \frac{\partial h}{\partial y}\Bigr|_{y=Y,z=Z},\quad\quad \displaystyle \bar{h}_{z}\approx \frac{\partial h}{\partial z}\Bigr|_{y=Y,z=Z}.$$

Next, we discretize the DAE \cref{eq:adj_all_time} or the ODE \cref{eq:adjoint_ode} using the BDF-1 (Implicit Euler) method.
As we wish to assess the error in the temporal discretization of the \emph{original} DAE, we adopt strategies to reduce the error associated with the \emph{adjoint} problem. 
In particular, we numerically solve the adjoint problem on a more refined temporal grid. 
We define the adjoint time grid points $\tilde{t}_j=t_0+j\Delta \tilde{t}$ for $j=0,1,2,\dots,\tilde{N}$ with step size $\Delta \tilde{t}=\Delta t/r$. 
This means that the adjoint problem is solved with a step size that is $r$ times smaller than step size used to solve the original DAE problem. 
For the experiments presented below, we use $r=4$ except for \cref{ex:hc_model_example} where we use $r=3$ (simply to reduce computational time).
Although the adjoint problem is solved on a finer grid, the adjoint problem (\cref{eq:adj_all_time} or \cref{eq:adjoint_ode}) is linear, and hence does not require a nonlinear solve, which might be required for the numerical solution of the original DAE (\cref{eq:dae_equation} or  \cref{eq:index_2_dae_eq}).
Whenever we require the evaluation of a Jacobian matrix in order to approximate $\bar{f}_{y}, \bar{f}_{z}, \bar{g}_{y}$, $\bar{g}_{z}$, $\bar{h}_{y}$, or $\bar{h}_{z}$ at the adjoint time grid points $\tilde{t}_j$, we use linear interpolation of $Y$ and $Z$ from the original time grid, as discussed in \cref{subsec:ns}.

Both the adjoint problems \cref{eq:adj_all_time,eq:adjoint_ode} are solved backwards in time from $T$ to $0$, using the first order BDF  method discussed in \cref{subsec:ns}. 
Initial conditions (given at time $t=T$, which is the terminal time for the original DAE \cref{eq:dae_equation}) are chosen according to the error representation we wish to assess. 
This produces the approximate solutions $[\overline{\phi}^{(y)},\overline{\phi}^{(z)}]^T$ ($\approx [\phi^{(y)},\phi^{(z)}]^T$) and $[\overline{\nu}^{(y)}, \overline{\nu}^{(z)}]^T$ ($\approx [\nu^{(y)},\nu^{(z)}]^T$), respectively. 

%This produces the (approximate) solutions $\phi^{(x)}=[\phi^{(y)},\phi^{(z)}]^T$ and $\nu^{(x)}=[\nu^{(y)},\nu^{(z)}]^T$, respectively. 
%Again, we are abusing notation, and using the same symbols for the exact and numerical adjoint solutions.

\subsection{Evaluation of the Integrals}
Error estimates in \cref{thm:adj_1_qoi_diff_vari_all_time,thm:adj1_qoi_terminal_time_diff_algeb_vari,thm:error_dae_index2_updated,thm:adj2_qoi_terminal_time_both_diff_alg_vari,,thm:err_using_ode_adj_diff,thm:terminal_error_index1_from_ode_adj} involve integrals of the form 
\begin{equation*}
    \langle a , b \rangle = \int_{0}^{T} \left( a(t), b(t) \right)\,\, dt = \sum_{k=0}^{N-1}\int_{t_k}^{t_{k+1}} \left( a(t), b(t) \right)\,\, dt.
\end{equation*}
% To approximate the integrals,
% \begin{align}\label{eq:integral_on_rhs}
%     \langle \phi^{(y)},f(Y,Z)-\dot{Y} \rangle + \langle \phi^{(z)},g(Y,Z) \rangle,
% \end{align}
% and
% \begin{align}\label{eq:integral_ode}
%     \langle \nu^{(y)},f(Y,Z)-\dot{Y} \rangle + \langle \nu^{(z)},h(Y,Z)-\dot{Z} \rangle,
% \end{align}
We approximate the integral $\displaystyle \int_{t_k}^{t_{k+1}} \left( a(t), b(t) \right)\,\, dt$ by a  5-point Gauss-Legendre quadrature on $[t_{k}, \, t_{k+1}]$.
%, again using linear interpolation when necessary. 

\subsection{Effectivity Ratio }\label{subsec:effectivity_ratio}

To quantify the accuracy of the error estimates (formed by evaluating the right-hand side of \cref{thm:adj_1_qoi_diff_vari_all_time,thm:adj1_qoi_terminal_time_diff_algeb_vari,thm:adj2_qoi_terminal_time_both_diff_alg_vari,thm:err_using_ode_adj_diff,thm:error_dae_index2_updated,thm:terminal_error_index1_from_ode_adj},   and replacing the adjoint solution by its numerical approximation) we utilize a quantity called the ``effectivity ratio.''
The effectivity ratio is defined as
\begin{align*}
    \text{Effectivity Ratio} = \frac{\text{Error Estimate}}{\text{Reference Error}}. 
\end{align*}
% Here, ``error estimate'' refers to the error in the QoI computed via our error estimates (i.e. evaluating the right-hand side of \cref{thm:adj_1_qoi_diff_vari_all_time,thm:adj1_qoi_terminal_time_diff_algeb_vari,thm:adj2_qoi_terminal_time_both_diff_alg_vari,thm:err_using_ode_adj_diff,thm:error_dae_index2_updated,thm:terminal_error_index1_from_ode_adj}) using the methods described in the previous subsection. 

Ideally, the reference error would simply be the difference between the true QoI and the QoI computed using the approximate solution to the DAE (i.e.\ $\eQCI$ or $\eQFT$ depending on context). 
This quantity corresponds to the left hand side of our error representations (\cref{thm:adj_1_qoi_diff_vari_all_time,thm:adj1_qoi_terminal_time_diff_algeb_vari,thm:adj2_qoi_terminal_time_both_diff_alg_vari,thm:err_using_ode_adj_diff,thm:error_dae_index2_updated,thm:terminal_error_index1_from_ode_adj}). 
However, as we do not always have access to the true solution $[y,z]^T$, we typically utilize a numerical solution obtained via standard software packages using very tight tolerances. 
In the following numerical tests, we reduce the original DAE to an ODE via index reduction (\cref{eq:ode_reduced_form}), and then solve this ODE numerically using SciPy's \texttt{solve\_ivp} routine \cite{SciPy-NMeth}. 
We utilize absolute tolerance $\text{atol}=1$e$-15$ and relative tolerance $\text{rtol}=1$e$-12$ to obtain a highly accurate approximation of the true solution, and use this to define the reference error.
% the corresponding ODEs  (index-0 DAEs) to the index-1 \cref{eq:dae_equation} and \cref{eq:index_2_dae_eq} DAEs with chosen method BDF, at the adjoint time grids $\tilde{t}_j$ (see \cref{subsec:ns_adj_dae_and_ode}) and a very small tolerances: $\text{atol}=1e-15, \text{rtol}=1e-12 $ as the reference solution $x$ of the DAEs 
However, in \cref{ex:hc_model_example}, we have access to the analytical solution, and therefore avoid this extra layer of approximation.
% \todo{The rest of the paragraph is referencing all numerical experiments, just the ENNP example? Unclear}
% We set $\zeta$, and $\psi(t)$ are the vectors of one's and same size of $x$. Then the computation of the reference error $\left(e^{(x)}(T),\zeta\right)$ is simply the Euclidean inner product at $t=T$ and for the reference error $\langle e^{(x)},\psi\rangle$ we do the similar approach as discussed for approximating \cref{eq:integral_on_rhs}.

\subsection{Comparison of the Adjoint DAE and Adjoint ODE Error Estimates}
\label{sec:comparison_approaches}
The Adjoint ODE analysis (\cref{thm:err_using_ode_adj_diff,thm:terminal_error_index1_from_ode_adj}) involves unified results for index-1 and index-2 DAEs, and hence is conceptually simpler than the Adjoint DAE analysis (\cref{thm:adj_1_qoi_diff_vari_all_time,thm:adj1_qoi_terminal_time_diff_algeb_vari,thm:error_dae_index2_updated,thm:adj2_qoi_terminal_time_both_diff_alg_vari}) which has distinct analyses depending on the index of the DAE. However, the error estimates formed using Adjoint ODE analysis incur a higher computational cost than the corresponding ones for  Adjoint DAE analysis. The Adjoint ODE analysis involves evaluating the function $h$ and its Jacobians $\bar{h}_y$ and $\bar{h}_z$, while the corresponding terms in the Adjoint DAE analysis are evaluations of the functions $g$ and its Jacobians $\bar{g}_y$ and $\bar{g}_z$. The function $h$ (see \cref{eq:h_definition}) and its Jacobians may be significantly more expensive to evaluate relative to the function $g$ and its Jacobians. 
%This is especially evident for index-2 DAEs, where the function $h$ involves complicated terms involving $f,g$ and their higher-order derivatives. 
Moreover, as \cref{eq:h_definition} makes clear,  evaluating the function $h$ is more expensive for an index-2 DAE than it is for an index-1 DAE. Thus, while the Adjoint ODE analysis may be conceptually simpler, in practice it may be computationally prohibitive, especially for index-2 DAEs with a large dimension (see  \cref{ex:hc_model_example}).

\section{Numerical Results}\label{sec:numerical_results}
We present here a series of numerical tests whereby we evaluate the performance of  error estimates for both semi-explicit index-1 and Hessenberg index-2 DAEs. 
We explore the cases of linear, nonlinear, autonomous, and non autonomous DAEs. 
We also test our error estimate on a system derived from a finite difference discretization of the Electro-Neutral Nernst-Planck Equations (ENNPE), which form a system of nonlinear 
Partial Differential Algebraic Equations (PDAEs). 
In all cases, we solve the DAE in question (\cref{eq:dae_equation} or  \cref{eq:index_2_dae_eq}) using the first order BDF method as described in \cref{subsec:ns}. In our implementation, we use the Python routine scipy.optimize (fsolve) \cite{SciPy-NMeth} to solve the implicit systems given by \cref{eq:bdf1} and \cref{eq:bdf1_for_index2}. 
%This method is first order accurate, stable and convergent for semi-explicit index-1 and Hessenberg index-2 DAEs \cite{Atkinson}. 
Initial conditions are chosen to ensure consistency (i.e. to satisfy the explicit constraint equation and the hidden constraint, if any).

%In \cref{sec:err_estimates} we form error estimates from our error representations given in \cref{sec:adj_error_analysis_sec_adj_dae,sec:adj_error_analysis_sec_adj_ode}. Then, 
In \cref{sec:index_one_example_sec,sec:hessenberg_2_example_sec} we present results for index-1 and Hessenberg index-2 DAEs. A DAE arising from a PDE with constraints is investigated in \cref{sec:ENNPE}. The tables in this section use the notation Adjoint DAE to refer to the estimates developed from \cref{sec:adj_error_analysis_sec_adj_dae} (\cref{thm:adj_1_qoi_diff_vari_all_time,thm:adj1_qoi_terminal_time_diff_algeb_vari,thm:error_dae_index2_updated,thm:adj2_qoi_terminal_time_both_diff_alg_vari}), and use Adjoint ODE to refer to the estimates developed from \cref{sec:adj_error_analysis_sec_adj_ode} (\cref{thm:err_using_ode_adj_diff,thm:terminal_error_index1_from_ode_adj}).

\subsection{Semi-explicit Index-1 DAEs Examples}\label{sec:index_one_example_sec}
We begin with  relatively simple, index-1 DAEs in semi-explicit form. 

\begin{example}\label{ex:robertson_index_1_DAE_example}
The Robertson semi-explicit index-1 DAEs is 
\begin{equation}\label{eq:robertson_equation_index_1}
    \left. \begin{aligned}
        \dot{y}_{1} &=-0.04y_{1}+10^4y_{2}z,\\
      \dot{y}_{2} &=0.04y_{1}-10^4y_{2}z-(3\times 10^7)y_{2}^2,\\
      0 &=y_{1}+y_{2}+z-1,
    \end{aligned}\right\}
\end{equation}
with the initial condition for the differential variable $y(0) = [1, 0]^T$. The  consistent initial condition for the algebraic variable is $z(0)=0$ and $t\in (0, T]$. This is a common example problem discussed in the documentation of existing numerical DAE solvers \cite{robertsonexampletaken}. 
\end{example}
For this example, we focus on a QoI that is cumulative over time, $\mathcal{Q}_{\left[0,T\right]}$.
% (see \cref{eq:qoi_def_inverval}). 
Below, we present the error estimate and effectivity ratios for two variations of $\QCI{y}{z}$, one involving only the differential variables (\cref{tab:numerical_results_for_robertson_ode_adj_and_dae_adj_cumulative_diff}) and one involving only the algebraic variables (\cref{tab:numerical_results_for_robertson_ode_adj_and_dae_adj_cumulative_alg}). 
Both are cumulative over the interval $[0,T]$, and computed using \cref{thm:adj_1_qoi_diff_vari_all_time} and \cref{thm:err_using_ode_adj_diff}.
We first set $\psi^y(t)=[1,1]^T$, $\psi^z(t)=0$ and assess our error estimate using both the Adjoint DAE and Adjoint ODE approaches. 
The results are presented in \cref{tab:numerical_results_for_robertson_ode_adj_and_dae_adj_cumulative_diff}. We then repeat the experiment with $\psi^z(t)=1$, $\psi^y(t)=[0,0]^T$ and present the results in \cref{tab:numerical_results_for_robertson_ode_adj_and_dae_adj_cumulative_alg}. 
Both experiments were performed for multiple values of terminal time ($T$) and time step size of the original DAE ($\Delta t$). 
For clarity of presentation, we only report our results to $5$ significant digits.

The data in \cref{tab:numerical_results_for_robertson_ode_adj_and_dae_adj_cumulative_diff,tab:numerical_results_for_robertson_ode_adj_and_dae_adj_cumulative_alg} 
indicates that the error estimates from both approaches are quite accurate, and effectivity ratio close to one, which illustrates the validation of our theorems.  Moreover, the tables 
appear to indicate that there is no difference between the errors calculated using the adjoint DAE and adjoint ODE approaches. 
% This is not true, strictly speaking, but the two approaches yield error calculations which are ident
 This is not true, strictly speaking, as the results corresponding to adjoint ODEs and Adjoint DAEs display a difference after the tenth digit (not shown). 

\begin{table}[ht]
    \centering
    \caption{Numerical Results for  DAE in \cref{ex:robertson_index_1_DAE_example} using Adjoint DAE (\cref{thm:adj_1_qoi_diff_vari_all_time}), and Adjoint ODE (\cref{thm:err_using_ode_adj_diff}) 
    % \sout{to estimate error in  QoI  $\mathcal{Q}_{\left[0,T\right]}$ where }
    % $\eQCI = \langle e^{(y)}, \psi^y \rangle$
    to estimate the error    $\eQCI$ where   $\psi^{y}(t) = [1,1]^T$ and $\psi^{z}(t) = 0$.    
    % involving only the differential variables over $[0,T]$.}
    }
    \label{tab:numerical_results_for_robertson_ode_adj_and_dae_adj_cumulative_diff}
    \begin{tabular}{cccccc}
        \toprule
        \( \Delta t \) & \( T \) & \multicolumn{2}{c}{Error Estimate} & \multicolumn{2}{c}{Effectivity Ratio}\\
        \cmidrule(lr){3-4}
        \cmidrule(lr){5-6}
        & & Adjoint ODEs & Adjoint DAEs & Adjoint ODEs & Adjoint DAEs \\
        \midrule
        0.001 & 1 & -2.8546e-06 & -2.8546e-06 & 0.9989 & 0.9989 \\
              & 10 & -6.4758e-05 & -6.4758e-05 & 0.9999 & 0.9999 \\
              & 20 & -1.2082e-04 & -1.2082e-04 & 0.9999 & 0.9999 \\
              & 50 & -2.3590e-04 & -2.3590e-04 & 0.9999 & 0.9999 \\
              & 100 & -3.5917e-04 & -3.5917e-04 & 0.9999 & 0.9999 \\
        \midrule      
        0.0005 & 1 & -1.4288e-06 & -1.4288e-06 & 0.9996 & 0.9996 \\
               & 10 & -3.2384e-05 & -3.2384e-05 & 0.9999 & 0.9999 \\
               & 20 & -6.0415e-05 & -6.0415e-05 & 1.0 & 1.0 \\
               & 50 & -1.1796e-04 & -1.1796e-04 & 1.0 & 1.0 \\
               & 100 & -1.7960e-04 & -1.7960e-04 & 1.0 & 1.0 \\   
        \bottomrule
    \end{tabular}
\end{table}

\begin{table}[ht]
    \centering
    \caption{Numerical Results for  DAE in \cref{ex:robertson_index_1_DAE_example} using Adjoint DAE ( \cref{thm:adj_1_qoi_diff_vari_all_time}), and Adjoint ODE (\cref{thm:err_using_ode_adj_diff}) to estimate the error $\eQCI$ where $\psi^{z}(t) = 1$ and $\psi^{y}(t) = [0,0]^T$.}
    % $\eQCI = \langle e^{(z)}, \psi^z \rangle$
    % involving only the algebraic variable over $[0,T]$.}
    \label{tab:numerical_results_for_robertson_ode_adj_and_dae_adj_cumulative_alg}
    \begin{tabular}{cccccc}
        \toprule
        \( \Delta t \) & \( T \) & \multicolumn{2}{c}{Error Estimate} & \multicolumn{2}{c}{Effectivity Ratio}\\
        \cmidrule(lr){3-4}
        \cmidrule(lr){5-6}
        & & Adjoint ODEs & Adjoint DAEs & Adjoint ODEs & Adjoint DAEs \\
        \midrule
        0.001 & 1 & 2.8546e-06 & 2.8546e-06 & 0.9989 & 0.9989 \\
              & 10 & 6.4758e-05 & 6.4758e-05 & 0.9999 & 0.9999 \\
              & 20 & 1.2082e-04 & 1.2082e-04 & 0.9999 & 0.9999 \\
              & 50 & 2.3590e-04 & 2.3590e-04 & 0.9999 & 0.9999 \\
              & 100 & 3.5917e-04 & 3.5917e-04 & 0.9999 & 0.9999 \\
        \midrule      
        0.0005 & 1 & 1.4288e-06 & 1.4288e-06 & 0.9996 & 0.9996 \\
               & 10 & 3.2384e-05 & 3.2384e-05 & 0.9999 & 0.9999 \\
               & 20 & 6.0415e-05 & 6.0415e-05 & 1.0 & 1.0 \\
               & 50 & 1.1796e-04 & 1.1796e-04 & 1.0 & 1.0 \\
               & 100 & 1.7960e-04 & 1.7960e-04 & 1.0 & 1.0 \\    
        \bottomrule
    \end{tabular}
\end{table}

For our next example, we again analyze an index-1 DAE in semi-explicit form, but with a nonlinear algebraic constraint. 
\begin{example}\label{ex:pendulum_index_1_DAE_example}
The Pendulum semi-explicit index-1 DAEs \cite{pendulumexampletaken} is
\begin{equation}\label{eq:pendulum_index_1_equation}
    \left. \begin{aligned}
        \dot{y}_1 &= y_3,\\
      \dot{y}_2 &= y_4,\\
      m\dot{y}_3 &= -2y_1z,\\
      m\dot{y}_4 &= - mg - 2y_2z,\\
      m\left(y_3^2 + y_4^2 - gy_2\right) - 2z\left(y_1^2 + y_2^2\right) &= 0,
    \end{aligned} \right \}
\end{equation}
for $t\in (0,T]$. Here the parameters  are $m=1,g=9.81, s=1$, and the initial condition for the differential variable is $y(0)=[0,-s,1,0]^T$. The  consistent initial condition for the algebraic variable is  $z (0) = \frac{m(1+s\cdot g)}{2s^2}$ and $t \in (0,T]$.
\end{example}
For this example we discuss results for two variations of QoI at the terminal time, $\mathcal{Q}_T$ (see \cref{eq:qoi_def_at_terminal_time});  one involving only the differential variables (\cref{tab:numerical_results_for_pendulum1_ode_adj_and_dae_adj_terminal_diff}) and one involving only the algebraic variables (\cref{tab:numerical_results_for_pendulum1_ode_adj_and_dae_adj_terminal_alg}). 
%The ensuing discussion shows the Error Estimate as well as effectivity ratios for two variations of $\mathcal{Q}_{\left[T\right]}(X)$,
Error estimates for  various values of the terminal time $(T)$ and time step size ($\Delta t$) are computed using \cref{thm:adj1_qoi_terminal_time_diff_algeb_vari} and \cref{thm:terminal_error_index1_from_ode_adj}.
% To test our theorems \cref{thm:adj1_qoi_terminal_time_diff_algeb_vari} and \cref{thm:terminal_error_index1_from_ode_adj} for computing error in QoIs involving both differential and algebraic variables at $t=T$ for a index-1 DAEs, we chosen pendulum index-1 DAEs \cref{ex:pendulum_index_1_DAE_example}. 
% We implemented \cref{thm:adj1_qoi_terminal_time_diff_algeb_vari} and \cref{thm:terminal_error_index1_from_ode_adj} in computing error 
In \cref{tab:numerical_results_for_pendulum1_ode_adj_and_dae_adj_terminal_diff} we show error estimates and effectivity ratios for the case when the QoI depends on the differential variables only, that is, $\QFT{y}{z} = \left( y(T), \zeta^y \right)$, with $\zeta^y=[1,1,1,1]^T$ and $\zeta^z=0$.
% In order to compute \(\left( e^{(y)}(T), \zeta^y \right)\), we set $\zeta^y=[1,1,1,1]^T, \zeta^z=0$. 
% The results corresponding to columns adjoint DAEs and adjoint ODEs respectively together with their effectivity ratios in \cref{tab:numerical_results_for_pendulum1_ode_adj_and_dae_adj_terminal_diff}. 
% To compute the reference error \(\left( e^{(y)}(T), \zeta^y \right)\) at $t=T$, we followed the procedure as presented in \cref{subsec:effectivity_ratio}. 
We show results for the case when the QoI depends on the algebraic variable only, $\QFT{y}{z} = \left( z(T), \zeta^z \right)$, with $\zeta^z=1$ and $\zeta^y=[0,0,0,0]^T$ in \cref{tab:numerical_results_for_pendulum1_ode_adj_and_dae_adj_terminal_alg}.
% Set $\zeta^y=[0,0,0,0]^T, \zeta^z=1$ to compute the error \(\left( e^{(z)}(T), \zeta^z \right)\) involving algebraic variable $z$ and put the results in \cref{tab:numerical_results_for_pendulum1_ode_adj_and_dae_adj_terminal_alg}. 
Again, effectivity ratios demonstrates the accuracy of the error estimates.

\begin{table}[ht]
    \centering
    \caption{Numerical Results for  DAE in \cref{ex:pendulum_index_1_DAE_example} using Adjoint DAE (\cref{thm:adj1_qoi_terminal_time_diff_algeb_vari}), and Adjoint ODE (\cref{thm:terminal_error_index1_from_ode_adj}) to estimate the error $\eQFT$ where $\zeta^{y}= [1,1,1,1]^T $ and $\zeta^{z} = 0$.}
    % $\eQFT = \left( e^{(y)}(T), \zeta^y \right)$
    % involving only the differential variables at $T$.}
    \label{tab:numerical_results_for_pendulum1_ode_adj_and_dae_adj_terminal_diff}
    \begin{tabular}{cccccc}
        \toprule
        \( \Delta t \) & \( T \) & \multicolumn{2}{c}{Error Estimate} & \multicolumn{2}{c}{Effectivity Ratio}\\
        \cmidrule(lr){3-4}
        \cmidrule(lr){5-6}
        & & Adjoint ODEs & Adjoint DAEs & Adjoint ODEs & Adjoint DAEs \\
        \midrule
        0.001 & 1 & -5.0252e-03 & -5.0234e-03 & 0.9997 & 0.9993 \\
              & 2 & 9.1300e-03 & 9.1376e-03 & 0.9986 & 0.9994 \\
              & 3 & -1.5065e-02 & -1.5059e-02 & 0.9975 & 0.9972 \\
              & 4 & 1.6388e-02 & 1.6414e-02 & 1.0 & 1.002 \\
              & 5 & -2.4788e-02 & -2.4786e-02 & 0.9952 & 0.9951 \\
        \midrule      
        0.0005 & 1 & -2.5171e-03 & -2.5166e-03  & 0.9999 & 0.9997 \\
               & 2 & 4.5715e-03 & 4.5734e-03 & 0.9993 & 0.9997 \\
               & 3 & -7.5751e-03  & -7.5738e-03 & 0.9988 & 0.9986 \\
               & 4 & 8.2010e-03 & 8.2074e-03 & 1.0 & 1.001 \\
               & 5 & -1.2512e-02 & -1.2511e-02 & 0.9976 & 0.9976 \\ 
        \bottomrule
    \end{tabular}
\end{table}

\begin{table}[ht]
    \centering
    \caption{Numerical Results for  DAE in \cref{ex:pendulum_index_1_DAE_example} using Adjoint DAE (\cref{thm:adj1_qoi_terminal_time_diff_algeb_vari}), and Adjoint ODE (\cref{thm:terminal_error_index1_from_ode_adj}) to estimate the error $\eQFT$ where $\zeta^z = 1$ and $\zeta^y = [0,0,0,0]^T$.}
    % $\eQFT = \left( e^{(z)}(T), \zeta^z \right)$
    % involving only the algebraic variable at $T$.}
    \label{tab:numerical_results_for_pendulum1_ode_adj_and_dae_adj_terminal_alg}
    \begin{tabular}{cccccc}
        \toprule
        \( \Delta t \) & \( T \) & \multicolumn{2}{c}{Error Estimate} & \multicolumn{2}{c}{Effectivity Ratio}\\
        \cmidrule(lr){3-4}
        \cmidrule(lr){5-6}
        & & Adjoint ODEs & Adjoint DAEs & Adjoint ODEs & Adjoint DAEs \\
        \midrule
        0.001 & 1 & 5.0019e-03 & 5.0059e-03 & 0.9969 & 0.9977 \\
              & 2 & 9.8723e-03 & 9.8878e-03 & 0.9939 & 0.9954 \\
              & 3 & 1.4554e-02 & 1.4587e-02 & 0.9912 & 0.9934 \\
              & 4 & 1.8998e-02 & 1.9050e-02 & 0.9888 & 0.9915 \\
              & 5 & 2.3160e-02 & 2.3230e-02 & 0.9871 & 0.9901 \\
        \midrule      
        0.0005 & 1 & 2.5094e-03 & 2.5104e-03 & 0.9984 & 0.9988 \\
               & 2 & 4.9689e-03 & 4.9728e-03 & 0.9969 & 0.9977 \\
               & 3 & 7.3475e-03 & 7.3556e-03 & 0.9956 & 0.9967 \\
               & 4 & 9.6163e-03 & 9.6293e-03 & 0.9944 & 0.9958 \\
               & 5 & 1.1749e-02 & 1.1766e-02 & 0.9936 & 0.9951 \\
        \bottomrule
    \end{tabular}
\end{table}

\subsection{Hessenberg (Pure) Index-2 DAEs 
Examples}\label{sec:hessenberg_2_example_sec}
We now move on to a nonlinear non-autonomous index-2 DAE in Hessenberg form.

\begin{example}\label{ex:index2_picked_from_Petzol_computer_method}
The following system is a Hessenberg (pure) index-2 DAE
\begin{equation}\label{eq:index2 picked from Petzol's computer method}
    \left. \begin{aligned}
    \dot{y}_1 &= \lambda y_1 - z, \\
    \dot{y}_2 &= (2\lambda - \sin^2 t) y_2 + (\sin^2 t) (y_1 - 1)^2, \\
    0 &= y_2 - (y_1 - 1)^2,
\end{aligned}\right \}    
\end{equation}
where $\lambda$ is a parameter and the given initial condition corresponding to the differential variable is $y(0) =[2, 1]^T$. The  consistent initial condition for the algebraic variable is $z(0) = \lambda$ and $t\in (0,T]$.
\end{example}

For this example, we place our attention on a QoI that is cumulative time, $\mathcal{Q}_{\left[0,T\right]}$ (see \cref{eq:qoi_def_inverval}). For this experiment we set $\lambda=1$.
Once again, we present the error estimate and effectivity ratios for two variations of $\mathcal{Q}_{\left[0,T\right]}$, one involving only the differential variables (\cref{tab:numerical_results_for_index2_from_petzol_book_ode_adj_and_dae_adj_terminal_diff}) and one involving only the algebraic variables (\cref{tab:numerical_results_for_index2_from_petzol_book_ode_adj_and_dae_adj_terminal_alg}). 
The error estimates are computed using \cref{thm:error_dae_index2_updated} and \cref{thm:err_using_ode_adj_diff}.
We first set $\psi^y(t)=[1,1]^T$, $\psi^z(t)=0$ and asses our error estimate using both the Adjoint DAE and Adjoint ODE approaches in \cref{tab:numerical_results_for_index2_from_petzol_book_ode_adj_and_dae_adj_terminal_diff}. We then repeat the experiment with $\psi^z(t)=1$, $\psi^y(t)=[0,0]^T$ and present the results in \cref{tab:numerical_results_for_index2_from_petzol_book_ode_adj_and_dae_adj_terminal_alg}. 
Once again we see small error estimates and effectivity ratios very close to unity. 
The only exception is the error estimate using the adjoint ODE, where as the terminal time $T$ increases, we see that the error estimate becomes slightly less accurate as indicated by the effectivity ratio.

\begin{table}[ht]
    \centering
    \caption{Numerical Results for  DAE in \cref{ex:index2_picked_from_Petzol_computer_method} using Adjoint DAE (\cref{thm:error_dae_index2_updated}), and Adjoint ODE (\cref{thm:err_using_ode_adj_diff}) to estimate the error    $\eQCI$ where   $\psi^{y}(t) = [1,1]^T$ and $\psi^{z}(t) = 0$.}
    % $\eQCI = \langle e^{(y)}, \psi^y \rangle$
    % to estimate error in QoI $\mathcal{Q}_{\left[0,T\right]}$ 
    % using $\psi^y(t) = [1,1]^T$ and $\psi^z(t) = 0$.
    % involving only the differential variables over $[0,T]$.}
    \label{tab:numerical_results_for_index2_from_petzol_book_ode_adj_and_dae_adj_terminal_diff}
    \begin{tabular}{cccccc}
        \toprule
        \( \Delta t \) & \( T \) & \multicolumn{2}{c}{Error Estimate} & \multicolumn{2}{c}{Effectivity Ratio}\\
        \cmidrule(lr){3-4}
        \cmidrule(lr){5-6}
        & & Adjoint ODEs & Adjoint DAEs & Adjoint ODEs & Adjoint DAEs \\
        \midrule
        0.001 & 1 & -5.6088e-04 & -5.6076e-04 & 0.9999 & 0.9997 \\
              & 2 & -1.0454e-03 & -1.0473e-03 & 0.9978 & 0.9996 \\
              & 3 & -1.2735e-03 & -1.2912e-03 & 0.9859 & 0.9996 \\
              % & 4 & -1.3216e-03 & -1.4058e-03 & 0.9396 & 0.9995 \\
              % & 5 & -1.0606e-03 & -1.4582e-03 & 0.727 & 0.9995 \\
        \midrule      
        0.0005 & 1 & -2.8053e-04 & -2.8050e-04 & 1.0 & 0.9999 \\
               & 2 & -5.2340e-04 & -5.2389e-04 & 0.9989 & 0.9998 \\
               & 3 & -6.4142e-04 & -6.4583e-04 & 0.9989 & 0.9998 \\
               % & 4 & -6.8206e-04 & -7.0319e-04 & 0.9697 & 0.9998 \\
               % & 5 & -6.2957e-04 & -7.2939e-04 & 0.863 & 0.9998 \\
        \bottomrule
    \end{tabular}
\end{table}

\begin{table}[ht]
    \centering
    \caption{Numerical Results for  DAE in \cref{ex:index2_picked_from_Petzol_computer_method} using Adjoint DAE (\cref{thm:error_dae_index2_updated}), and Adjoint ODE (\cref{thm:err_using_ode_adj_diff}) to estimate the error    $\eQCI$ where   $\psi^{z}(t) = 1$ and $\psi^{y}(t) = [0,0]^T$.}
    % $\eQCI = \langle e^{(z)}, \psi^z \rangle$
    % to estimate error in QoI $\mathcal{Q}_{\left[0,T\right]}$ 
    % using $\psi^y(t) = [0,0]^T$ and $\psi^z(t) = 1$.}
    % involving only the algebraic variables over $[0,T]$.}
    \label{tab:numerical_results_for_index2_from_petzol_book_ode_adj_and_dae_adj_terminal_alg}
    \begin{tabular}{cccccc}
        \toprule
        \( \Delta t \) & \( T \) & \multicolumn{2}{c}{Error Estimate} & \multicolumn{2}{c}{Effectivity Ratio}\\
        \cmidrule(lr){3-4}
        \cmidrule(lr){5-6}
        & & Adjoint ODEs & Adjoint DAEs & Adjoint ODEs & Adjoint DAEs \\
        \midrule
        0.001 & 1 & 3.1533e-04 & 3.1540e-04 & 0.9991 & 0.9993 \\
              & 2 & 4.2383e-04 & 4.3143e-04 & 0.9812 & 0.9988 \\
              & 3 & 4.2758e-04 & 4.7408e-04 & 0.9006 & 0.9985 \\
              % & 4 & 2.7655e-04 & 4.8980e-04 & 0.5638 & 0.9985 \\
              % & 5 & -4.8856e-04 & 4.9557e-04 & -0.9844 & 0.9985 \\
        \midrule      
        0.0005 & 1 & 1.5785e-04 & 1.5786e-04 & 0.9995 & 0.9996 \\
               & 2 & 2.1404e-04 & 2.1594e-04 & 0.9906 & 0.9994 \\
               & 3 & 2.2564e-04 & 2.3730e-04 & 0.9502 & 0.9993 \\
               % & 4 & 1.9166e-04 & 2.4516e-04 & 0.7812 & 0.9993 \\
               % & 5 & 8.3257e-07 & 2.4805e-04 & 0.003355 & 0.9994 \\
        \bottomrule
    \end{tabular}
\end{table}

% As our final example in this section, we test our error representation on an index-2 form of the pendulum problem. 
\begin{example}\label{ex:pendulum_index_2_DAE_example}
The Pendulum index-2 DAE \cite{pendulumexampletaken} system is
\begin{equation}\label{eq:pendulum_index_2_equation}
   \left. \begin{aligned}
          \dot{y}_1 &= y_3,\\
          \dot{y}_2 &= y_4,\\
          m\dot{y}_3 &= -2y_1z,\\
          m\dot{y}_4 &= - mg - 2y_2z,\\
          y_1y_3 + y_2y_4 &= 0,
    \end{aligned}\right\}
\end{equation}
for $t\in (0,T]$. Here the parameters  are $m=1,g=9.81, s=1$, and the initial condition for the differential variable is $y(0)=[0,-s,1,0]^T$. The  consistent initial condition for the algebraic variable is  $z (0) = \frac{m(1+s\cdot g)}{2s^2}$ and $t \in (0,T]$.
\end{example}
The pendulum index-2 DAEs in \cref{ex:pendulum_index_2_DAE_example} represent a Hessenberg index-2 problem. Here, we concentrate on the QoI at the terminal time, $\mathcal{Q}_T$ (see \cref{eq:qoi_def_at_terminal_time}). We present in \cref{tab:numerical_results_for_pendulum_2_ode_adj_and_dae_adj_terminal_diff} the error estimate and effectivity ratios for $\mathcal{Q}_T$, involving both the differential variables and algebraic variables. The error estimates are computed using \cref{thm:adj2_qoi_terminal_time_both_diff_alg_vari} and \cref{thm:terminal_error_index1_from_ode_adj}. The QoI $\mathcal{Q}_T$ is defined by  setting $\zeta^y=[1,1,1,1]^T, \zeta^z=1$.
%, put the results corresponding to columns adjoint DAEs and adjoint ODEs respectively in \cref{tab:numerical_results_for_pendulum_2_ode_adj_and_dae_adj_terminal_diff}. For the reference error at $t=T$, we again followed the procedure as discussed in \cref{subsec:effectivity_ratio}. 
We observe promising effectivity ratios for both the Adjoint ODE and Adjoint DAE approaches.

\begin{table}[ht]
    \centering
    \caption{Numerical Results for  DAE in \cref{ex:pendulum_index_2_DAE_example} using Adjoint DAE (\cref{thm:adj2_qoi_terminal_time_both_diff_alg_vari}), and Adjoint ODE (\cref{thm:terminal_error_index1_from_ode_adj}) to estimate the error $\eQFT$ where $\zeta^y = [1,1,1,1]^T$ and $\zeta^z = 1$.}
    % $\eQFT = \left( e^{(y)}(T), \zeta^y \right)+\left( e^{(z)}(T), \zeta^z \right)$
    % to estimate error in QoI $\mathcal{Q}_T$ 
    % using $\zeta^y = [1,1,1,1]^T$ and $\zeta^z = 1$.}
    % involving both the differential and algebraic variables at $T$.}
    \label{tab:numerical_results_for_pendulum_2_ode_adj_and_dae_adj_terminal_diff}
    \begin{tabular}{cccccc}
        \toprule
        \( \Delta t \) & \( T \) & \multicolumn{2}{c}{Error Estimate} & \multicolumn{2}{c}{Effectivity Ratio}\\
        \cmidrule(lr){3-4}
        \cmidrule(lr){5-6}
        & & Adjoint ODEs & Adjoint DAEs & Adjoint ODEs & Adjoint DAEs \\
        \midrule
        0.001 & 1 & -1.7159e-03 & -1.7153e-03 & 1.003 & 1.002 \\
              & 2 & 1.5162e-02 & 1.5169e-02 & 0.9975 & 0.998 \\
              & 3 & -5.5532e-03 & -5.5430e-03 & 1.006 & 1.004 \\
              & 4 & 2.6268e-02 & 2.6288e-02 & 0.999 & 0.9997 \\
              & 5 & -9.9657e-03 & -9.9507e-03 & 1.002 & 1.0 \\
              
        \midrule      
        0.0005 & 1 & -8.5615e-04 & -8.5598e-04 & 1.001 & 1.001 \\
               & 2 & 7.6081e-03 & 7.6099e-03 & 0.9988 & 0.999 \\
               & 3 & -2.7685e-03 & -2.7659e-03 & 1.003 & 1.002  \\
               & 4 & 1.3182e-02 & 1.3187e-02 & 0.9996 & 0.9999 \\
               & 5 & -4.9932e-03 & -4.9893e-03 & 1.001 & 0.9999 \\
              
        \bottomrule
    \end{tabular}
\end{table}

\subsection{Electro-Neutral Nernst-Planck Equation  of Ion Transport}
\label{sec:ENNPE}
\begin{example}\label{ex:hc_model_example}
We consider a system describing the evolution of the concentrations of two monovalent ions. This system is sometimes called the Electro-Neutral Nernst-Planck Equations (ENNPE) \cite{profowensmodel, NernstPlanck}. The evolution equations and constraint are written as
    \begin{equation}\label{eq:hc_model_equation}
    \left.\begin{aligned}
    \frac{\partial c}{\partial t} &= D_{c}\left(\frac{\partial^2 c}{\partial x^2} +\frac{\partial }{\partial x}\left(c\frac{\partial \Psi}{\partial x}\right)\right),\\
    \frac{\partial a}{\partial t} &= D_{a}\left(\frac{\partial^2 a}{\partial x^2} -\frac{\partial }{\partial x}\left(a\frac{\partial \Psi}{\partial x} \right)\right),\\
    0 &= c-a ,
\end{aligned}\right \}
\end{equation}
where $c(x,t)$ and $a(x,t)$ denote the concentration of Hydrogen ion (cation) and Chloride ion (anion) respectively, and $\frac{\partial \Psi}{\partial x}(x,t)$ is the electric potential gradient.
% \jhc{to avoid a notational conflict, I suggest using lower case letters here}
No flux (homogeneous Robin) boundary conditions are given by
\begin{equation}\label{eq:robin_bcs}
    \left.\begin{aligned}
        \frac{\partial c}{\partial x} + c\frac{\partial \Psi}{\partial x}\Bigr|_{x=0} &= \frac{\partial c}{\partial x} + c\frac{\partial \Psi}{\partial x}\Bigr|_{x=1} &= 0,\\
        \frac{\partial a}{\partial x} - a\frac{\partial \Psi}{\partial x}\Bigr|_{x=0} &= \frac{\partial a}{\partial x} - a\frac{\partial \Psi}{\partial x}\Bigr|_{x=1} &= 0.\\
    \end{aligned}\right \}
\end{equation}
   Initial conditions are $c(x,0) = a(x,0) = 2 + cos(\pi x)$ and $x\in [0,1], \, t\in (0,T]$, where $D_{c}, D_{a}$ are the diffusion coefficients of the cation and anion ions respectively.
\end{example}
The method of separation of variables gives the analytic solution
\begin{align}\label{eq:true_solution_hc_model}
    c(x,t) &=a(x,t)=2+e^{-\pi^2 D_
    \text{eff}t}\cos (\pi x),\\
    w(x,t) &= \left(\frac{D_{a}-D_{c}}{D_{a} + D_{c}}\right)\frac{-\pi e^{-\pi^2 D_\text{eff}t}\sin (\pi x)}{2+e^{-\pi^2 D_\text{eff}t}\cos (\pi x)}\label{eq:true solution W},
\end{align}
where
$$D_\text{eff}=\frac{2D_{c}D_{a}}{D_{c}+D_{a}},\text{ and } w =\frac{\partial \Psi}{\partial x}.$$
Notice that the electric potential $\Psi$ is only defined up to an additive constant. 
Therefore we utilize $w = \frac{\partial \Psi}{\partial x}$ as our state variable. 

\subsubsection{Spatial discretization: Staggered Grid Approach}\label{subsec:semi-discretization} 
We use a staggered grid to discretize the spatial derivatives. 
This technique has been used in many numerical methods (particularly in computational fluid dynamics) since the development of the Marker and Cell (MAC) method \cite{mckee2008:kjd}.
Consider a set of $N_s-1$ uniformly spaced grid points in the spatial domain with spacing $\Delta x$. 
% Let $X_L$, $X_R$ be the left and right end points of the domain. 
For the unit interval let,
    $$\tilde{x}_i= i\Delta x,\quad\quad i=1,2,\dots ,N_s-1,\quad\quad\Delta x = \frac{1}{N_s}. $$
These points are often called ``cell edges'' and are the locations where we represent vector valued quantities such as the electric potential gradient $w(\tilde{x}_i,t)$. 
We also introduce a second spatial grid at the so-called ``cell centers''
%to incorporate the homogeneous Robin boundary conditions \cref{eq:robin_bcs}. Let $\tilde{X}_R, \tilde{X}_L$ denote the right and left end points of the modified domain 
$$x_j = (2j-1)\Delta x/2,\quad j = 1,2,\dots,N_s.$$
These points will be used to represent the values of scalar valued quantities such as the ion concentrations ($c(x_j,t)$ and $a(x_j,t)$). 
%We choose $N$ equidistant grid points on the modified domain, then \jhc{what??  delta x was defined earlier too}
%$$\Delta x=\frac{\tilde{X}_R-\tilde{X}_L}{N_s-1}$$
This staggered grid facilitates the use of centered finite difference approximations to the various spatial derivatives in \cref{eq:hc_model_equation} and representation of the no flux boundary conditions \cref{eq:robin_bcs}, and leads to the approximation:  $C_j(t) \approx c(x_j,t)$, $A_j(t) \approx a(x_j,t)$, and $W_i(t) \approx w(\tilde{x}_i,t)$. 
Details of the staggered grid approach  are given in appendix \ref{macscheme}. 
Spatial disretization converts the ENNPE to a Hessenberg index-2 DAE,

\begin{equation}\label{eq:hc_index_2_equation}
   \left. \begin{aligned}
          \dot{C} &= D_{c} M C + D_{c} B_C,\\
    \dot{A} &= D_{a} M A - D_{c} B_A,\\
    0 &= \Pi(C) - \Pi(A)  ,
    \end{aligned}\right\}
\end{equation}
where, 
\[
M = \frac{1}{\Delta x^2} \begin{pmatrix}
-1 & 1 & 0 & \cdots & 0 \\
1 & -2 & 1 & \cdots & 0 \\
0 & 1 & -2 & \cdots & 0 \\
\vdots & \vdots & \vdots & \ddots & \vdots \\
0 & 0 & 0 & \cdots & -1
\end{pmatrix},
B_C = \frac{1}{2\Delta x}\begin{pmatrix}
(C_1 + C_2)W_1 \\
(C_2 + C_3)W_2 - (C_1 + C_2)W_1 \\
(C_3 + C_4)W_3 - (C_2 + C_3)W_2 \\
\vdots \\
- (C_{N_s-1} + C_{N_s})W_{N_s-1}
\end{pmatrix},
\]
\[
B_A = \frac{1}{2\Delta x}\begin{pmatrix}
(A_1 + A_2)W_1 \\
(A_2 + A_3)W_2 - (A_1 + A_2)W_1 \\
(A_3 + A_4)W_3 - (A_2 + A_3)W_2 \\
\vdots \\
- (A_{N_s-1} + C_{N_s})W_{N_s-1}
\end{pmatrix},
C = \begin{pmatrix}
    C_1\\
    C_2\\
    \vdots\\
    C_{N_s}
\end{pmatrix},
A = \begin{pmatrix}
    A_1\\
    A_2\\
    \vdots\\
    A_{N_s}
\end{pmatrix},
\]
and
\[
W = \begin{pmatrix}
    W_1\\
    W_2\\
    \vdots\\
    W_{N_s-1}
\end{pmatrix},
\Pi(\xi) = \begin{pmatrix}
    \xi_2\\
    \xi_3\\
    \vdots\\
    \xi_{N_s}
\end{pmatrix} \text{ for }
\xi = \begin{pmatrix}
    \xi_1\\
    \xi_2\\
    \vdots\\
    \xi_{N_s}
\end{pmatrix},
\]
% \md{ in my code I used $\Pi(\xi)=[\xi_2,\xi_3,\cdots , \xi_{N_s}]^T$ but in the description of appendix \ref{macscheme} it is used like $[\xi_1,\xi_2,\cdots, \xi_{N_s-1}]^T$, we need to fix it}
where $M\in \mathbb{R}^{N_s\times N_s}$, and $B_C, B_A, C(t), A(t), \xi(t)\in \mathbb{R}^{N_s}$, and $W(t)\in \mathbb{R}^{N_s-1}$.
We set the differential variables as $y = [C^T,A^T]^T$, and algebraic variables as $z=W$, with $n=2N_s$, and $m=N_s-1$, then \cref{eq:hc_index_2_equation} is exactly in the form of \cref{eq:index_2_dae_eq}.
Initial conditions corresponding to the differential variables are given by
$$C_j(0)=A_j(0)=2+\cos{(\pi x_j)},\quad \text{ for } j=1,2,3,\dots, N_s.$$

There is a subtle point to be made with regards to the initial condition for the electric potential gradient (the algebraic variable). 
For the PDE system \cref{eq:hc_model_equation}, one can obtain the initial condition by differentiating the constraint equation, substituting the transport equations to eliminate $\frac{\partial c}{\partial t}$ and $\frac{\partial a}{\partial t}$, solving for $w(x,t)$ (this calculation is used to derive \cref{eq:true solution W}) and evaluating at $t=0$ to yield 
\begin{equation} w(x,0) = \left. \frac{D_c \frac{\partial c}{\partial x}- D_a \frac{\partial a}{\partial x}}{D_c c + D_a a}\right|_{t=0}.\label{eq:analytic_IC}
\end{equation}
This expression can then be evaluated at the points $\tilde{x}_i$ to derive what we call the ``analytic initial condition'' for the algebraic variables $W_i$. 

Alternately, one can perform a similar calculation on the spatially discretized DAE system \cref{eq:hc_index_2_equation}: differentiate the constraint equation, eliminate $\dot C$ and $\dot A$, solve for $W$ and set $t=0$. 
Doing so leads to what we call the ``discrete initial condition'' corresponding to the algebraic variables $W_i$,
\begin{align}\label{eq:semi_discrete_IC}
    W_l(0) = \frac{D_{c}\left(\frac{C_l-C_{l+1}}{\Delta x}\right) - D_{a}\left(\frac{A_l-A_{l+1}}{\Delta x}\right)}{D_{c}\left(\frac{C_l + C_{l+1}}{2}\right) + D_{a}\left(\frac{A_l + A_{l+1}}{2}\right)}\Bigr|_{t=0}, \quad \text{for } l=1,2,3,\cdots, N_s-1.
\end{align}
Notice that \cref{eq:semi_discrete_IC} is equivalent to a relatively simple centered finite difference approximation to \cref{eq:analytic_IC}. 
They are not equivalent, but differ by the spatial discretization error. 
This does raise interesting questions about the appropriate initial conditions that one should utilize when time-evolving discretized PDAEs, and indicates a potential line of future inquiry. 
In our numerical experiments, we used both the discrete initial condition and analytic initial condition for algebraic variables, and 
observed no appreciable difference in the error analysis that is the focus of this paper. 
For brevity we present only the results obtained using the discrete initial condition. 

\subsubsection{Error analysis of the ENNPE model}\label{subsec:error-analysis-ennpe}
For this semi-discretized PDAE (ENNPE) \cref{eq:hc_index_2_equation}, we focus on the QoI at the terminal time, $\mathcal{Q}_T$ (see \cref{eq:qoi_def_at_terminal_time}). We present the error estimate based on Adjoint DAE and effectivity ratios for two variations of $\mathcal{Q}_T$, one involving only the differential variables (column corresponding to $\left ( e^{(y)}(T),\zeta^y\right)$ in \cref{tab:numerical_results_for_HC_model_catastrophic_error}) and one involving only the algebraic variables (column corresponding to $\left ( e^{(z)}(T),\zeta^z\right)$ in \cref{tab:numerical_results_for_HC_model_catastrophic_error}).
% considering discrete IC (see \cref{eq:semi_discrete_IC}) for algebraic variables $W$. 

The error in the QoI $\mathcal{Q}_T$ involving both the differential and algebraic state variables at $t=T$ is estimated using Adjoint DAE (\cref{thm:adj2_qoi_terminal_time_both_diff_alg_vari}). We choose the spatial grid spacing $\Delta x=0.004$, which  is small enough so that the error due to spatial discretization is negligible relative to the error due to temporal discretization. The dimension of the resulting nonlinear Hessenberg index-2 DAE is $749$. We are not performing Adjoint ODE based error estimation for this particular PDAE because of its too high computation cost to obtain the corresponding index reduction ODE system.
%temporal spacing $\Delta t$, dominates the error that arose due to spatial spacing. 
%
The semi-discretized system in \cref{eq:hc_index_2_equation} corresponds to differential variables $y = [C^T,A^T]^T$, and algebraic variables $z=W$. The values used for the diffusion coefficients are   $D_c=0.5$, and $D_a=0.05$.

 To estimate the error $\left ( e^{(y)}(T),\zeta^y\right)$ (using \cref{thm:adj2_qoi_terminal_time_both_diff_alg_vari}) involving only the differential variables $y$ at $t=T$, we set $\zeta^y=[\mathbf{1}^{N_s},\mathbf{1}^{N_s}]^T$, where $\mathbf{1}^{N_s} = [1,1,\ldots,1] \in \mathbb{R}^{N_s}$ is a column vector of ones (we also set $\zeta^z=[\mathbf{0}^{N_s-1}]^T$, where $\mathbf{0}^{N_s-1} = [0,0,\ldots,0] \in \mathbb{R}^{N_s-1}$ is a column vector of zeros). 
 Similarly, to evaluate  $\left ( e^{(z)}(T),\zeta^z\right)$  using \cref{thm:adj2_qoi_terminal_time_both_diff_alg_vari}), we set $\zeta^z=[\mathbf{1}^{N_s-1}]^T$, and $\zeta^y=[\mathbf{0}^{N_s},\mathbf{0}^{N_s}]^T$.
The reference error is computed using the analytical solution \cref{eq:true_solution_hc_model,eq:true solution W} evaluated at terminal time and the results are presented in \cref{tab:numerical_results_for_HC_model_catastrophic_error}
%As we have the exact solution for this particular problem so we use this solution to compute the reference error, 
% Results are  shown in \cref{tab:numerical_results_for_HC_model_catastrophic_error,tab:numerical_results_for_HC_model_catastrophic_error_analytic_IC}.

We notice that the estimates of error $\left ( e^{(z)}(T),\zeta^z\right)$ are quite accurate whereas those for the error $\left ( e^{(y)}(T),\zeta^y\right)$ demonstrate somewhat anomalous results. The effectivity ratios for this latter case are sometimes not close to one. 
In precisely these cases, the errors are extremely small (within a couple of orders of magnitude of machine precision), and hence incur catastrophic floating point cancellation in their calculation. 
To illustrate this, we split the integral $\langle \phi^{(y)},f(Y,Z)-\dot{Y} \rangle$ into four integrals  as follows:
\begin{align*}
    \langle \phi^{(y)},f(Y,Z)-\dot{Y}\rangle = I_1+I_2+I_3+I_4,
\end{align*}
where $I_i=\langle \phi^{(y)}_i,\tilde{f}_i \rangle$, each of $\phi^{(y)}_i$'s, $\tilde{f}_i$'s are vectors of same size (125) for $i=1,2,3,4$ and analogously $\phi^{(y)}=[\phi^{(y)}_1,\phi^{(y)}_2,\phi^{(y)}_3,\phi^{(y)}_4]^T$, and $f(Y,Z)-\dot{Y}=[\tilde{f}_1,\tilde{f}_2,\tilde{f}_3,\tilde{f}_4]^T$. 
We present the values for $I_i$ in \cref{tab:numerical_results_for_HC_model_catastrophic_error_why} and observe the values of these integrals match up to 11 or 12 digits, while having opposite signs. 
This is the cause of catastrophic cancellation in the calculation of the sum $I_1+I_2+I_3+I_4$. 
To illustrate the effectiveness of our error estimate while avoiding this issue, we estimate the error using QoIs that isolate only the negative (or alternately, positive) components of the error. 
To this end, we define $\left ( e^{(y)}(T),\zeta_{-}^y\right)$, (the $-$ subscript indicates negative error) by setting $\zeta_{-}^y=[\mathbf{1}^{125},\mathbf{0}^{125},\mathbf{1}^{125},\mathbf{0}^{125}]^T$. 
This isolates the error in both differential variables in the left half of the spatial domain (where we observe the errors with negative sign). 
The results for this case are in \cref{tab:numerical_results_for_HC_model_split_diff_variables_error}, and indicate that the effectivity ratio is close to one, as expected. 
We observe similar results for $\left ( e^{(y)}(T),\zeta_{+}^y\right)$, (the $+$ subscript indicates positive error)  when setting $\zeta_{+}^y=[\mathbf{0}^{125},\mathbf{1}^{125},\mathbf{0}^{125},\mathbf{1}^{125}]^T$ as shown in \cref{tab:numerical_results_for_HC_model_split_diff_variables_error}.

% We have the similar results by using the analytic IC for $z$ which are not presented here. 
% \jhc{So, do we always use the ``discrete'' IC for z? It wasnt clear to me we did this from the description above. Also, when we discretize in space, but not in time, we should call the term discrete as discrete etc.}

\begin{table}[ht]
    \centering
    \caption{Numerical Results for  DAE in \cref{ex:hc_model_example} using Adjoint DAE (\cref{thm:adj2_qoi_terminal_time_both_diff_alg_vari}) to estimate the error $\eQFT$ where $\zeta^y=[\mathbf{1}^{250},\mathbf{1}^{250}]^T$, $\zeta^z = [\mathbf{0}^{249}]^T$ and again $\eQFT$ where $\zeta^z = [\mathbf{1}^{249}]^T$, $\zeta^y=[\mathbf{0}^{250},\mathbf{0}^{250}]^T$ with spatial grid spacing $\Delta x=0.004$, and $\Delta t = 0.001$.}
    \label{tab:numerical_results_for_HC_model_catastrophic_error}
    \begin{tabular}{ccccc}
        \toprule
         \( T \) & \multicolumn{2}{c}{ \(\eQFT=\left( e^{(y)}(T), \zeta^y \right)\)} & \multicolumn{2}{c}{ \(\eQFT = \left( e^{(z)}(T), \zeta^z \right)\)} \\
        \cmidrule(lr){2-3}
        \cmidrule(lr){4-5}
         & Error Estimate & Effectivity Ratio & Error Estimate & Effectivity Ratio \\
        \midrule
         0.5 & 3.1247e-12 & 1.168 & -2.9255e-02 & 1.008 \\
           1 & 4.1497e-12 & 0.9014 & -3.5010e-02 & 0.9881 \\
           2 & 8.8735e-12 & 1.173 & -2.7558e-02 & 0.9795 \\
           3 & 2.4128e-12 & 1.025 & -1.6760e-02 & 0.9769 \\
       %  \midrule      
       % 0.002 & 0.5 & -1.60735026e-13 & -1.98818739e-12  & -5.84851244e-02 & -5.82473879e-02 \\
       %         & 1 & 6.40259066e-13 & -1.73816517e-12 & -7.00002574e-02 & -7.04112488e-02 \\
       %         & 2 & -9.42564082e-12 & -5.57376367e-12 & -5.51138234e-02 & -5.56795156e-02 \\
       %         & 3 & 1.70793797e-12 & -1.33981715e-12 & -3.35279054e-02 & -3.39144187e-02 \\
        \bottomrule
    \end{tabular}
    
\end{table}

% \begin{table}[ht]
%     \centering
%     \caption{Numerical Results for  DAE in \cref{ex:hc_model_example} using \cref{thm:adj2_qoi_terminal_time_both_diff_alg_vari} to estimate error in both differential and algebraic variables at $T$ with spatial grid spacing $\Delta x=0.004$ and analytic IC for algebraic variables.}\todo{do we add both tables corresponds to discrete IC and analytic IC for algebraic variables?}
%     \label{tab:numerical_results_for_HC_model_catastrophic_error_analytic_IC}
%     \begin{tabular}{cccccc}
%         \toprule
%     \( \Delta t \) & \( T \) & \multicolumn{2}{c}{Error \(\left( e^{(y)}(T), \zeta^y \right)\)} & \multicolumn{2}{c}{Error \(\left( e^{(z)}(T), \zeta^z \right)\)} \\
%         \cmidrule(lr){3-4}
%         \cmidrule(lr){5-6}
%        & & Error Estimate & E-Ratio & Error Estimate & E-Ratio \\
%         \midrule
%         0.001 & 0.5 & 3.4910e-12 & 0.9584 & -2.9255e-02 & 1.008 \\
%               & 1 & 1.3918e-12 & 1.008 & -3.5010e-02 & 0.9881 \\
%               & 2 & 5.4871e-13 & 3.33 & -2.7558e-02 & 0.9795 \\
%              & 3 & 7.8812e-12 & 1.426 & -1.6760e-02 & 0.9769 \\
%         \midrule      
%       0.002 & 0.5 & -5.1630e-12 & 0.9988 & -5.8485e-02 & 1.004 \\
%                & 1 & -3.3263e-12 & 0.9871 & -7.0000e-02 & 0.9942 \\
%               & 2 & -3.8464e-12 & 0.7013 & -5.5114e-02 & 0.9898 \\
%               & 3 & 3.2193e-12 & 1.095 & -3.3528e-02 & 0.9886 \\
%         \bottomrule
%     \end{tabular}
    
% \end{table}

\begin{table}[ht]
    \centering
    \caption{Numerical Results for  DAE in \cref{ex:hc_model_example} using \cref{thm:adj2_qoi_terminal_time_both_diff_alg_vari} to investigate the reason behind the catastrophic cancellation occurs in $\eQFT = \left( e^{(y)}(T), \zeta^y \right)$ at $T$ with spatial grid spacing $\Delta x=0.004$, and $\Delta t = 0.001$.}
    \label{tab:numerical_results_for_HC_model_catastrophic_error_why}
    \begin{tabular}{ccc}
        \toprule
         \( T \) & Integrals & Approximate Values \\
        \midrule
        0.5 & $I_1$ & -0.012900368135421083 \\
          & $I_2$ & 0.012900368138223206 \\
          & $I_3$ & -0.012900368138363746  \\
          & $I_3$ & 0.012900368138686344  \\
        \bottomrule
    \end{tabular}
    % results for $\Delta t = 0.002$
    % \begin{tabular}{ccc}
    %     \toprule
    %      \( T \) & Integrals & Approximate Values \\
    %     \midrule
    %     0.5 & $I_1$ & -0.02579157493198473 \\
    %       & $I_2$ & 0.025791574931755806 \\
    %       & $I_3$ & -0.025791574931212712  \\
    %       & $I_3$ & 0.025791574931280887  \\
    %     \bottomrule
    % \end{tabular}
    
\end{table}

\begin{table}[h]
    \centering
    \caption{Numerical Results for  DAE in \cref{ex:hc_model_example} using Adjoint DAE (\cref{thm:adj2_qoi_terminal_time_both_diff_alg_vari} with $\zeta^z=[\mathbf{0}^{249}]^T$) to estimate the error $\left( e^{(y)}(T), \zeta_{-}^y \right)$ where $\zeta_{-}^y=[\mathbf{1}^{125},\mathbf{0}^{125},\mathbf{1}^{125},\mathbf{0}^{125}]^T$ and $\left( e^{(y)}(T), \zeta_{+}^y \right)$ where $\zeta_{+}^y=[\mathbf{0}^{125},\mathbf{1}^{125},\mathbf{0}^{125},\mathbf{1}^{125}]^T$ with spatial grid spacing $\Delta x=0.004$.}
    \label{tab:numerical_results_for_HC_model_split_diff_variables_error}
    \begin{tabular}{cccccc}
        \toprule
        \( \Delta t \) & \( T \) & \multicolumn{2}{c}{Error \(\left( e^{(y)}(T), \zeta_{-}^y \right)\)} &\multicolumn{2}{c}{Error \(\left( e^{(y)}(T), \zeta_{+}^y \right)\)}\\
        \cmidrule(lr){3-4}
        \cmidrule(lr){5-6}
        & & Error estimate & Effectivity Ratio & Error estimate & Effectivity Ratio \\
        \midrule
        0.001 & 0.5 & -2.0443e-02 & 0.9715 & 2.0443e-02 & 0.9715 \\
              & 1 & -2.6109e-02 & 0.9716 & 2.6109e-02 & 0.9716 \\
              & 2 & -2.1295e-02 & 0.9716 & 2.1295e-02 & 0.9716 \\
              & 3 & -1.3026e-02 & 0.9717 & 1.3026e-02 & 0.9717 \\
        \midrule      
       0.002 & 0.5 & -4.0866e-02 & 0.9856 & 4.0866e-02 & 0.9856 \\
               & 1 & -5.2201e-02 & 0.9857 & 5.2201e-02 & 0.9857 \\
               & 2 & -4.2588e-02 & 0.9858 & 4.2588e-02 & 0.9858 \\
               & 3 & -2.6058e-02 & 0.9859 & 2.6058e-02 & 0.9859 \\
        \bottomrule
    \end{tabular}
    
\end{table}

\section{Conclusions}\label{sec:conclusion}
In this paper, we propose a novel technique for \emph{a posteriori} error estimates for numerical solution of DAEs. 
In particular, our technique may be used to assess error for QoIs involving the differential \emph{or} algebraic state variables, and that are cumulative in time, or evaluated at the terminal time.
Furthermore, our methodology may be applied to the semi-explicit index-1 or Hessenberg index-2 DAEs.
Our technique is based on the formulation and solution of an adjoint DAE, which is a linear problem, making it relatively cost effective to implement.
We also present a second technique which is based on an adjoint ODE. 
This second technique, while being conceptually simpler, is more computationally more expensive.
We tested these two techniques numerically for various types of index-1 and Hessenberg index-2 DAE problems: linear, nonlinear, autonomous, non-autonomous, and an example PDAE. In both techniques we have the accurate error estimate and the effectivity ratio close to one, unless there was catastrophic cancellation resulting in both the computed and exact error being extremely small.
The adjoint ODE approach is too costly for a large scale problem such as PDAEs, due to the cost of order reduction for large scale systems. 
However, the adjoint DAE approach works extremely well in this case, without the associated cost. 
% \todo{todo: future direction} 

There are a number of future directions which arise from this work. One such direction is extending the analysis to partial differential algebraic equations in such a way that the  estimates
 identify the error contribution due to the spatial and temporal discretizations, as was done for a PDE  in \cite{chaudhry2019posteriori}. Another direction is deriving adaptive algorithms based on the \textit{a posteriori} estimates. This requires a careful analysis of the numerical scheme under consideration to quantify the effects of different quadratures, time-stepping choices, projection operators, etc. Such analysis for ODEs and PDEs have been carried out in \cite{chaudhry2015posterioriIMEX,chaudhry2017posteriori,collins2015posteriori,collins2014posteriori, Chaudhry2021}, and in future we aim to utilize these ideas to extend the analysis in this work.

% \section*{Acknowledgements}

\begin{appendices}

\section{Some Proofs}
\label{sec:app_proofs}

\begin{proof}[Proof of \cref{lem:adj_error}]
Multiplying \cref{eq:diff} by the error term $e^{(y)}$ and \cref{eq:alg} by the error term $e^{(z)}$ and integrating by parts,
    \begin{align}
      \eQCI
      %&= \langle -\dot{\phi}^{(y)} - \bar{f}_y^T\phi^{(y)} - \bar{g}_y^T\phi^{(z)},y-Y\rangle + \langle - \bar{f}_z^T\phi^{(y)} - \bar{g}_z^T\phi^{(z)},z-Z\rangle,\notag\\ 
       &= \langle -\dot{\phi}^{(y)},y-Y\rangle -\langle \bar{f}_y^T\phi^{(y)},y-Y\rangle - \langle \bar{g}_y^T\phi^{(z)},y-Y\rangle
       - \langle \bar{f}_z^T\phi^{(y)},z-Z\rangle\notag\\
       &\quad\quad- \langle \bar{g}_z^T\phi^{(z)},z-Z\rangle \label{eq:plug_ibp}
       %&= \langle \phi^{(y)},\dot{y}-\dot{Y}\rangle - \left(\phi^{(y)}(T),y(T)-Y(T)\right) + \left(\phi^{(y)}(0),y(0)-Y(0)\right)
       %- \langle \phi^{(y)},\bar{f}_y(y-Y)\rangle\notag\\
       %&\quad\quad - \langle \phi^{(z)},\bar{g}_y(y-Y)\rangle - \langle \phi^{(y)},\bar{f}_z(z-Z)\rangle - \langle \phi^{(z)},\bar{g}_z(z-Z)\rangle, %\notag\\
       % &= \langle \phi^{(y)},\dot{y}-\dot{Y}\rangle -  \langle \phi^{(y)},\bar{f}_y(y-Y) + \bar{f}_z(z-Z)\rangle - \langle \phi^{(z)},\bar{g}_y(y-Y) + \bar{g}_z(z-Z)\rangle\notag\\
       % &\quad\quad + \left(\phi^{(y)}(0),y(0)- Y(0)\right) - \left(\phi^{(y)}(T),y(T)-Y(T)\right)\label{eq:similar_approach_error_index_1}.
    \end{align}
    Considering the first term on the right hand side of \cref{eq:plug_ibp} and applying integration by parts,
    \begin{align}
        \langle -\dot{\phi}^{(y)},y-Y\rangle 
        &=\int_{0}^{T}\left( \dot{\phi}^{(y)}(t),{y}(t)-{Y}(t)\right)dt\notag\\
        &=\sum_{k=0}^{N-1}\int_{t_k}^{t_{k+1}}\left( \dot{\phi}^{(y)}(t),{y}(t)-{Y}(t)\right)dt\notag\\
        %&= \sum_{k=0}^{N-1}\langle -\dot{\phi}^{(y)},y-Y\rangle_k, \notag\\        
        &=\sum_{k=0}^{N-1}\int_{t_k}^{t_{k+1}}\left( \phi^{(y)}(t),\dot{y}(t)-\dot{Y}(t)\right)dt\notag\\
        &\quad\quad + \sum_{k=0}^{N-1}\left[ \left(\phi^{(y)}(t_k),y(t_k)-Y(t_k)\right)-\left(\phi^{(y)}(t_{k+1}),y(t_{k+1})-Y(t_{k+1}) \right)\right], \notag\\
        &=\sum_{k=0}^{N-1}\int_{t_k}^{t_{k+1}}\left( \phi^{(y)}(t),\dot{y}(t)-\dot{Y}(t)\right)dt\notag\\
        &\quad\quad +  \left(\phi^{(y)}(t_0),y(t_0)-Y(t_0)\right)-\left(\phi^{(y)}(t_{N}),y(t_{N})-Y(t_{N}) \right), \notag\\
        &=\langle \phi^{(y)},\dot{y}-\dot{Y}\rangle +  \left(\phi^{(y)}(0),y(0)-Y(0)\right)-\left(\phi^{(y)}(T),y(T)-Y(T) \right),\label{eq:result_of_ibp}
    \end{align}
    where we also used the fact that all functions under consideration ($\phi, y, Y$) are continuous on $[0,T]$. Using  \cref{eq:result_of_ibp} in \cref{eq:plug_ibp},
    \begin{align}
        \eQCI
        &= \langle \phi^{(y)},\dot{y}-\dot{Y}\rangle -  \langle \phi^{(y)},\bar{f}_y(y-Y) + \bar{f}_z(z-Z)\rangle - \langle \phi^{(z)},\bar{g}_y(y-Y) + \bar{g}_z(z-Z)\rangle\notag\\
       &\quad\quad + \left(\phi^{(y)}(0),y(0)- Y(0)\right) - \left(\phi^{(y)}(T),y(T)-Y(T)\right)\label{eq:similar_approach_error_index_1}.
    \end{align}
    Now using the properties \cref{eq:property_adj_1_property_1} and \cref{eq:property_adj_1_property_2} followed by using \cref{eq:dae_equation},
    \begin{align*}
        \eQCI
        &= \langle \phi^{(y)},\dot{y}-\dot{Y}\rangle -  \langle \phi^{(y)},f(y,z) - f(Y,Z)\rangle - \langle \phi^{(z)},g(y,z) - g(Y,Z)\rangle\\
        &\quad\quad+ \left(\phi^{(y)}(0),y(0)- Y(0)\right)
         - \left(\phi^{(y)}(T),y(T)-Y(T)\right),\\
        &= \langle \phi^{(y)},\dot{y} - f(y,z)\rangle - \langle \phi^{(y)},\dot{Y} - f(Y,Z)\rangle + \langle \phi^{(z)},g(Y,Z)\rangle + \left(\phi^{(y)}(0),y(0)- Y(0)\right)\\
        &\quad\quad - \left(\phi^{(y)}(T),y(T)-Y(T)\right),\\
        % &= - \langle \phi^{(y)},\dot{Y} - f(Y,Z)\rangle + \langle \phi^{(z)},g(Y,Z)\rangle + \left(\phi^{(y)}(0),y(0)- Y(0)\right) - \left(\phi^{(y)}(T),y(T)-Y(T)\right),\\
         &= \left( \phi^{(y)}(0),e^{(y)}(0) \right) - \left(\phi^{(y)}(T),y(T)-Y(T)\right)
         % \notag\\ &\quad\quad 
         + \langle \phi^{(y)}, f(Y,Z) - \dot{Y}  \rangle+ \langle \phi^{(z)}, g(Y,Z)  \rangle.
    \end{align*}
    % so, we have
    % \begin{align*}
    %  \langle e^{(y)}, \psi^y \rangle + \langle e^{(z)}, \psi^z \rangle 
    %  &= \left( \phi^{(y)}(0),e^{(y)}(0) \right) - \left(\phi^{(y)}(T),y(T)-Y(T)\right)\notag\\ &\quad\quad + \langle \phi^{(y)}, f(Y,Z) - \dot{Y}  \rangle+ \langle \phi^{(z)}, g(Y,Z)  \rangle.
    % \end{align*}
\end{proof}

\begin{proof}[Proof of \cref{lem:tech_results}]

\begin{enumerate}
    \item 
    \begin{align*}
        \left(P^T\zeta^y,e^{(y)} \right) &= \left(\left(I -\bar{g}^T_{y}(\bar{f}^T_{z}\bar{g}^T_{y})^{-1}\bar{f}_z^T \right)\zeta^y,e^{(y)} \right),\\
        &= \left(\zeta^y,e^{(y)} \right) - \left(\bar{g}^T_{y}(\bar{f}^T_{z}\bar{g}^T_{y})^{-1}\bar{f}_z^T\zeta^y,e^{(y)} \right),\\
        &= \left(\zeta^y,e^{(y)} \right) - \left((\bar{f}^T_{z}\bar{g}^T_{y})^{-1}\bar{f}_z^T\zeta^y,\bar{g}_{y}e^{(y)} \right),\\
        &= \left(\zeta^y,e^{(y)} \right) + \left((\bar{f}^T_{z}\bar{g}^T_{y})^{-1}\bar{f}_z^T\zeta^y,g(Y)\right).
    \end{align*}

    \item 
    \begin{align*}
        &\left(-P^T\bar{f}^T_{y} \bar{g}^T_{y}(\bar{f}^T_{z}\bar{g}^T_{y})^{-1}\zeta^z , e^{(y)} \right),\\
        &= \left(-\left(I -\bar{g}^T_{y}(\bar{f}^T_{z}\bar{g}^T_{y})^{-1}\bar{f}_z^T \right)\bar{f}^T_{y} \bar{g}^T_{y}(\bar{f}^T_{z}\bar{g}^T_{y})^{-1}\zeta^z , e^{(y)} \right),\\
        &= \left(-\bar{f}^T_{y}\bar{g}^T_{y}(\bar{f}^T_{z}\bar{g}^T_{y})^{-1}\zeta^z ,e^{(y)} \right) + \left( \bar{g}^T_{y}(\bar{f}^T_{z}\bar{g}^T_{y})^{-1}\bar{f}_z^T\bar{f}^T_{y}\bar{g}^T_{y}(\bar{f}^T_{z}\bar{g}^T_{y})^{-1}\zeta^z,e^{(y)} \right),\\
        &= -\left(\bar{g}^T_{y}(\bar{f}^T_{z}\bar{g}^T_{y})^{-1}\zeta^z ,\bar{f}_{y}e^{(y)} \right) + \left((\bar{f}^T_{z}\bar{g}^T_{y})^{-1}\bar{f}_z^T\bar{f}^T_{y}\bar{g}^T_{y}(\bar{f}^T_{z}\bar{g}^T_{y})^{-1}\zeta^z,\bar{g}_{y}e^{(y)} \right),\\
        &= -\left(\bar{g}^T_{y}(\bar{f}^T_{z}\bar{g}^T_{y})^{-1}\zeta^z ,f(y,z) - f(Y,Z) - \bar{f}_z(z-Z)\right)\\ 
        &\qquad+\left((\bar{f}^T_{z}\bar{g}^T_{y})^{-1}\bar{f}_z^T\bar{f}^T_{y}\bar{g}^T_{y}(\bar{f}^T_{z}\bar{g}^T_{y})^{-1}\zeta^z,g(y)-g(Y)\right),\\
        &= -\left(\bar{g}^T_{y}(\bar{f}^T_{z}\bar{g}^T_{y})^{-1}\zeta^z ,f(y,z)\right) + \left(\bar{g}^T_{y}(\bar{f}^T_{z}\bar{g}^T_{y})^{-1}\zeta^z ,f(Y,Z)\right) \\
        &\qquad +\left(\bar{g}^T_{y}(\bar{f}^T_{z}\bar{g}^T_{y})^{-1}\zeta^z ,\bar{f}_z(z-Z)\right) -\left((\bar{f}^T_{z}\bar{g}^T_{y})^{-1}\bar{f}_z^T\bar{f}^T_{y}\bar{g}^T_{y}(\bar{f}^T_{z}\bar{g}^T_{y})^{-1}\zeta^z,g(Y)\right),\\
        &= -\left(\bar{g}^T_{y}(\bar{f}^T_{z}\bar{g}^T_{y})^{-1}\zeta^z ,f(y,z)\right) + \left(\bar{g}^T_{y}(\bar{f}^T_{z}\bar{g}^T_{y})^{-1}\zeta^z ,f(Y,Z)\right) +\left(\zeta^z ,e^{(z)} \right)\\
        &\qquad
        - \left((\bar{f}^T_{z}\bar{g}^T_{y})^{-1}\bar{f}_z^T\bar{f}^T_{y}\bar{g}^T_{y}(\bar{f}^T_{z}\bar{g}^T_{y})^{-1}\zeta^z,g(Y)\right).\\
    \end{align*}

    \item \begin{align*}
        &\left( \frac{d \bar{g}^T_{y}}{dt}\left(\bar{f}^T_{z}\bar{g}^T_{y}\right)^{-1} \zeta^z,e^{(y)} \right),\\
        &= \left(\left(\bar{f}^T_{z}\bar{g}^T_{y}\right)^{-1} \zeta^z,\frac{d \bar{g}_{y}}{dt}e^{(y)} \right),\\
        &= \left(\left(\bar{f}^T_{z}\bar{g}^T_{y}\right)^{-1} \zeta^z,\frac{d [\bar{g}_{y}e^{(y)} ]}{dt}-\bar{g}_{y}\frac{d e^y }{dt}\right),\\
        &= \left(\left(\bar{f}^T_{z}\bar{g}^T_{y}\right)^{-1} \zeta^z,-\frac{d g(Y)}{dt}-\bar{g}_{y}[f(y,z)-\dot{Y}]\right),\\
        &= - \left(\left(\bar{f}^T_{z}\bar{g}^T_{y}\right)^{-1} \zeta^z,\frac{d g(Y)}{dt}\right) - \left(\left(\bar{f}^T_{z}\bar{g}^T_{y}\right)^{-1} \zeta^z,\bar{g}_{y}f(y,z)\right)
        + \left(\left(\bar{f}^T_{z}\bar{g}^T_{y}\right)^{-1} \zeta^z,\bar{g}_{y}\dot{Y}\right),\\
        &= -\left(\left(\bar{f}^T_{z}\bar{g}^T_{y}\right)^{-1} \zeta^z,\frac{d g(Y)}{dt}\right) - \left(\bar{g}^T_{y}\left(\bar{f}^T_{z}\bar{g}^T_{y}\right)^{-1} \zeta^z,f(y,z)\right)+ \left(\bar{g}^T_{y}\left(\bar{f}^T_{z}\bar{g}^T_{y}\right)^{-1} \zeta^z,\dot{Y}\right).\\
    \end{align*}
    
\end{enumerate}
\end{proof}

\section{Staggered Grid Discretization}
\label{macscheme}
We use relatively standard central difference approximations for the spatial derivative of $c_x,c,a_x,$ and $a$. 
The nonlinear terms $ cw$, and $aw$, require multiplication of quantities which ``live'' at cell centers and those that live at cell edges. 
In this case, we average the two cell centered quantities ($c$ or $a$) to the cell edge between. 
We explicitly show the details of the spatial discretization for $c$ below. 
The discretization of $a$ is completely analogous.

The partial derivative of $c$ with respect to $x$ at $\tilde{x}_j$ is approximated as follows:
\[
c_x(\tilde x_j) \approx \frac{c(\tilde{x}_j+\Delta x/2) - c(\tilde{x}_j-\Delta x/2)}{\Delta x} = \frac{c(x_{j+1}) - c(x_j)}{\Delta x}. 
\]
That is to say that first derivatives (which naturally live at cell edges) are calculated by applying centered differences to quantities defined at cell centers. 
For the nonlinear terms, we average concentrations to the cell edges to yield 
\begin{equation}
    c(\tilde{x}_j) w(\tilde{x_j}) \approx \frac{c(\tilde{x}_j+\Delta x/2) + c(\tilde{x}_j-\Delta x/2)}{2}w(\tilde{x}_j) = \frac{c(x_{j+1}) + c(x_j)}{2}w(\tilde{x}_j)
\end{equation}  
We can now apply centered differences to the above expressions to yield the following expressions for $j=2,3,\dots,N_s-1$,
\begin{align*}
    &\left. \frac{\partial}{\partial x}\left(c_{x}+  cw\right) \right|_{x_j}\\
    &\approx \frac{1}{\Delta x}\left(c_x(x_j+\Delta x/2)+c(x_j+\Delta x/2)w(x_j+\Delta x/2)\right)\\
    &\quad\quad - \frac{1}{\Delta x}\left(c_x(x_j-\Delta x/2)+c(x_j-\Delta x/2)w(x_j-\Delta x/2)\right),\\
    &= \frac{1}{\Delta x}\left(c_x(\tilde{x}_j)+c(\tilde{x})w(\tilde{x}_j)\right)
     - \frac{1}{\Delta x}\left(c_x(\tilde{x}_{j-1})+c(\tilde{x}_{j-1})w(\tilde{x}_{j-1})\right),\\
    &= \frac{1}{\Delta x}\left(\frac{c(x_{j+1}) - c(x_j)}{\Delta x}+\frac{c(x_{j+1}) + c(x_j)}{2}w(\tilde{x}_j)\right)\\
    &\quad \quad - \frac{1}{\Delta x}\left(\frac{c(x_{j}) - c(x_{j-1})}{\Delta x}+\frac{c(x_{j}) + c(x_{j-1})}{2}w(\tilde{x}_{j-1})\right)\\
    % &= \frac{1}{\Delta x}\left(\frac{c(\tilde{x}_j+\Delta x/2+\Delta x/2)-c(\tilde{x}_j+\Delta x/2-\Delta x/2)}{\Delta x}\right)\\
    % &\quad\quad+ \frac{1}{\Delta x}\left(\frac{c(\tilde{x}_j+\Delta x/2-\Delta x/2) + c(\tilde{x}_j+\Delta x/2+\Delta x/2)}{2}\right)w(\tilde{x}_j+\Delta x/2)\\
    % &\quad\quad-\frac{1}{\Delta x} \left(\frac{c(\tilde{x}_j-\Delta x/2+\Delta x/2)-c(\tilde{x}_j-\Delta x/2-\Delta x/2)}{\Delta x}\right)\\
    % &\quad\quad -\frac{1}{\Delta x}\left(\frac{c(\tilde{x}_j-\Delta x/2-\Delta x/2)+c(\tilde{x}_j-\Delta x/2+\Delta x/2)}{2}\right)w(\tilde{x}_j-\Delta x/2),\\
    % &\approx \frac{1}{\Delta x}\left(\frac{c(\tilde{x}_j+\Delta x)-c(\tilde{x}_j)}{\Delta x}\right)
    % + \frac{1}{\Delta x}\left(\frac{c(\tilde{x}_j) + c(\tilde{x}_j+\Delta x)}{2}\right)w(\tilde{x}_j+\Delta x/2)\\
    % &\quad\quad-\frac{1}{\Delta x} \left(\frac{c(\tilde{x}_j)-c(\tilde{x}_j-\Delta x)}{\Delta x}\right) -\frac{1}{\Delta x}\left(\frac{c(\tilde{x}_j-\Delta x)+c(\tilde{x}_j)}{2}\right)w(\tilde{x}_j-\Delta x/2),\\
    &= \frac{1}{\Delta x^2}\left(c(x_{j+1})-2c(x_j)+c(x_{j-1})\right)\\
    &\quad\quad+\frac{1}{2\Delta x}\Big(\left(c(x_j)+c(x_{j+1})\right)w(\tilde{x}_j)-\left(c(x_j)+c(x_{j-1})\right)w(\tilde{x}_{j-1})\Big).
\end{align*}

For $j = 1$ and $j = N_s$, we need to utilize the  no flux (homogeneous Robin) boundary conditions \cref{eq:robin_bcs}.
These can be expressed as 
\[ \left. c_{x}+  cw \right|_{x=0} = \left. c_{x}+  cw \right|_{\tilde{x}_0} = 0, \]
and 
\[ \left. c_{x}+  cw \right|_{x=1} = \left. c_{x}+  cw \right|_{\tilde{x}_{N_s}} = 0. \]
Now, for $j=1$, i.e., the scheme at the left boundary of the modified domain is
\begin{align*}
    &\left. \frac{\partial}{\partial x}\left(c_{x}+  cw\right) \right|_{x_1}\\
    % &\approx \frac{1}{\Delta x}\left(c_x(x_j+\Delta x/2)+c(x_j+\Delta x/2)w(x_j+\Delta x/2)\right)\\
    % &\quad\quad - \frac{1}{\Delta x}\left(c_x(x_j-\Delta x/2)+c(x_j-\Delta x/2)w(x_j-\Delta x/2)\right),\\
    &\approx \frac{1}{\Delta x}\left(\Big(c_x(\tilde{x}_1)+c(\tilde{x}_1)w(\tilde{x}_1)\Big) - \Big(c_x(\tilde{x}_0)+c(\tilde{x}_0)w(\tilde{x}_0)\Big)\right),\\
    &= \frac{1}{\Delta x}\left(c_x(\tilde{x}_1)+c(\tilde{x}_1)w(\tilde{x}_1)\right)-  0 ,\\
    &= \frac{1}{\Delta x}\left(\frac{c(x_{2}) - c(x_1)}{\Delta x}+\frac{c(x_{2}) + c(x_1)}{2}w(\tilde{x}_1)\right)\\
    &= \frac{1}{\Delta x^2}\left(c(x_{2})-c(x_1))\right) +\frac{1}{2\Delta x}\Big(\left(c(x_1)+c(x_{2})\right)w(\tilde{x}_1)\Big).
\end{align*}
Similarly, for $j=N_s$, i.e., the scheme at the right boundary of the modified domain is 
\begin{align*}
    &\left. \frac{\partial}{\partial x}\left(c_{x}+  cw\right) \right|_{x_{N_s}}\\
    &\approx \frac{1}{\Delta x}\left(\Big(c_x(\tilde{x}_{N_s})+c(\tilde{x}_{N_s})w(\tilde{x}_{N_s})\Big) - \Big(c_x(\tilde{x}_{N_s-1})+c(\tilde{x}_{N_s-1})w(\tilde{x}_{N_s-1})\Big)\right),\\
    &= 0 - \frac{1}{\Delta x}\left(c_x(\tilde{x}_{N_s-1})+c(\tilde{x}_{N_s-1})w(\tilde{x}_{N_s-1})\right),\\
    &= - \frac{1}{\Delta x}\left(\frac{c(x_{N_s}) - c(x_{N_s-1})}{\Delta x}+\frac{c(x_{N_s}) + c(x_{N_s-1})}{2}w(\tilde{x}_{N_s-1})\right)\\
    &= \frac{1}{\Delta x^2}\left( -c(x_{N_s})+c(x_{N_s-1})\right)+\frac{1}{2\Delta x}\Big(-\left(c(x_{N_s})+c(x_{N_s-1})\right)w(\tilde{x}_{N_s-1})\Big).
\end{align*}
A completely analogous discretization is used for the the anion $a$. 
These finite difference discretizations are used to construct the right hand side of the differential equation for the differential variables in \cref{eq:hc_index_2_equation}. 

In order to construct the algebraic constraint for \cref{eq:hc_index_2_equation} we notice that the above finite difference equations are conservative. 
This implies that 
\[ \sum_{j= 1}^{N_s} c(x_j,t) \text{ and } \sum_{j = 1}^{N_s} a(x_j,t),\]
are both constants independent of time. 
This, in turn, means that if $c(x_j,0) - a(x_j,0) = 0$ at the initial time, and $c(x_j) - a(x_j) = 0$ for $j = 2, 3, \ldots N_s$, then it must be true that $c(x_{1}) - a(x_{1}) = 0$. 
Put another way, if we use a conservative discretization and use initial conditions which satisfy the constraint at all spatial locations, then we do not need to explicitly enforce the constraint at all locations. 
We need only enforce it in $N_s -1$ grid cells, and we are guaranteed that it will be satisfied in the remaining grid cell ``for free''. 
Thus, we do not explicitly require that $C - A = 0$ in \cref{eq:hc_index_2_equation}. 
Rather, we require that $\Pi(C - A) = 0$, where $\Pi$ is the linear operator which simply evaluates a variable at all but the first cell center.

\end{appendices}

\bibliography{dae1}

%% BioMed_Central_Bib_Style_v1.01

\begin{thebibliography}{68}
% BibTex style file: bmc-mathphys.bst (version 2.1), 2014-07-24
\ifx \bisbn   \undefined \def \bisbn  #1{ISBN #1}\fi
\ifx \binits  \undefined \def \binits#1{#1}\fi
\ifx \bauthor  \undefined \def \bauthor#1{#1}\fi
\ifx \batitle  \undefined \def \batitle#1{#1}\fi
\ifx \bjtitle  \undefined \def \bjtitle#1{#1}\fi
\ifx \bvolume  \undefined \def \bvolume#1{\textbf{#1}}\fi
\ifx \byear  \undefined \def \byear#1{#1}\fi
\ifx \bissue  \undefined \def \bissue#1{#1}\fi
\ifx \bfpage  \undefined \def \bfpage#1{#1}\fi
\ifx \blpage  \undefined \def \blpage #1{#1}\fi
\ifx \burl  \undefined \def \burl#1{\textsf{#1}}\fi
\ifx \doiurl  \undefined \def \doiurl#1{\url{https://doi.org/#1}}\fi
\ifx \betal  \undefined \def \betal{\textit{et al.}}\fi
\ifx \binstitute  \undefined \def \binstitute#1{#1}\fi
\ifx \binstitutionaled  \undefined \def \binstitutionaled#1{#1}\fi
\ifx \bctitle  \undefined \def \bctitle#1{#1}\fi
\ifx \beditor  \undefined \def \beditor#1{#1}\fi
\ifx \bpublisher  \undefined \def \bpublisher#1{#1}\fi
\ifx \bbtitle  \undefined \def \bbtitle#1{#1}\fi
\ifx \bedition  \undefined \def \bedition#1{#1}\fi
\ifx \bseriesno  \undefined \def \bseriesno#1{#1}\fi
\ifx \blocation  \undefined \def \blocation#1{#1}\fi
\ifx \bsertitle  \undefined \def \bsertitle#1{#1}\fi
\ifx \bsnm \undefined \def \bsnm#1{#1}\fi
\ifx \bsuffix \undefined \def \bsuffix#1{#1}\fi
\ifx \bparticle \undefined \def \bparticle#1{#1}\fi
\ifx \barticle \undefined \def \barticle#1{#1}\fi
\bibcommenthead
\ifx \bconfdate \undefined \def \bconfdate #1{#1}\fi
\ifx \botherref \undefined \def \botherref #1{#1}\fi
\ifx \url \undefined \def \url#1{\textsf{#1}}\fi
\ifx \bchapter \undefined \def \bchapter#1{#1}\fi
\ifx \bbook \undefined \def \bbook#1{#1}\fi
\ifx \bcomment \undefined \def \bcomment#1{#1}\fi
\ifx \oauthor \undefined \def \oauthor#1{#1}\fi
\ifx \citeauthoryear \undefined \def \citeauthoryear#1{#1}\fi
\ifx \endbibitem  \undefined \def \endbibitem {}\fi
\ifx \bconflocation  \undefined \def \bconflocation#1{#1}\fi
\ifx \arxivurl  \undefined \def \arxivurl#1{\textsf{#1}}\fi
\csname PreBibitemsHook\endcsname

%%% 1
\bibitem[\protect\citeauthoryear{Chua and Rohrer}{1965}]{Chua65}
\begin{barticle}
\bauthor{\bsnm{Chua}, \binits{L.}},
\bauthor{\bsnm{Rohrer}, \binits{R.}}:
\batitle{On the dynamic equations of a class of nonlinear rlc networks}.
\bjtitle{IEEE Transactions on Circuit Theory}
\bvolume{12}(\bissue{4}),
\bfpage{475}--\blpage{489}
(\byear{1965})
\end{barticle}
\endbibitem

%%% 2
\bibitem[\protect\citeauthoryear{Lutz et~al.}{1985}]{Lutz1985}
\begin{bchapter}
\bauthor{\bsnm{Lutz}, \binits{A.}},
\bauthor{\bsnm{Kee}, \binits{R.}},
\bauthor{\bsnm{Dwyer}, \binits{H.}}:
\bctitle{Ignition modeling with grid adaptation}.
In: \bbtitle{Western States Section, Combustion Institute}
(\byear{1985})
\end{bchapter}
\endbibitem

%%% 3
\bibitem[\protect\citeauthoryear{Lewis et~al.}{2017}]{profowensmodel}
\begin{barticle}
\bauthor{\bsnm{Lewis}, \binits{O.L.}},
\bauthor{\bsnm{Keener}, \binits{J.P.}},
\bauthor{\bsnm{Fogelson}, \binits{A.L.}}:
\batitle{A physics-based model for maintenance of the ph gradient in the
  gastric mucus layer}.
\bjtitle{American Journal of Physiology-Gastrointestinal and Liver Physiology}
\bvolume{313}(\bissue{6}),
\bfpage{599}--\blpage{612}
(\byear{2017}).
\bcomment{PMID: 28882824}
\end{barticle}
\endbibitem

%%% 4
\bibitem[\protect\citeauthoryear{Cao
  et~al.}{2003}]{adjointsensitivitypetzoldae}
\begin{barticle}
\bauthor{\bsnm{Cao}, \binits{Y.}},
\bauthor{\bsnm{Li}, \binits{S.}},
\bauthor{\bsnm{Petzold}, \binits{L.}},
\bauthor{\bsnm{Serban}, \binits{R.}}:
\batitle{Adjoint sensitivity analysis for differential-algebraic equations: The
  adjoint dae system and its numerical solution}.
\bjtitle{SIAM Journal on Scientific Computing}
\bvolume{24}(\bissue{3}),
\bfpage{1076}--\blpage{1089}
(\byear{2003})
\end{barticle}
\endbibitem

%%% 5
\bibitem[\protect\citeauthoryear{Brenan et~al.}{1995}]{nsivpdaepetzol}
\begin{bbook}
\bauthor{\bsnm{Brenan}, \binits{K.E.}},
\bauthor{\bsnm{Campbell}, \binits{S.L.}},
\bauthor{\bsnm{Petzold}, \binits{L.R.}}:
\bbtitle{Numerical Solution of Initial-Value Problems in Differential-Algebraic
  Equations},
\bedition{1}st edn.
\bpublisher{Society for Industrial and Applied Mathematics},
\blocation{Philadelphia, PA 19104 USA}
(\byear{1995})
\end{bbook}
\endbibitem

%%% 6
\bibitem[\protect\citeauthoryear{Hairer and
  Wanner}{2010}]{solvingodestiffanddae}
\begin{bbook}
\bauthor{\bsnm{Hairer}, \binits{E.}},
\bauthor{\bsnm{Wanner}, \binits{G.}}:
\bbtitle{Solving Ordinary Differential Equations II: Stiff and
  Differential-Algebraic Problems},
\bedition{Second, illustrated} edn.
\bsertitle{Springer Series in Computational Mathematics}.
\bpublisher{Springer},
\blocation{Berlin, Heidelberg, Germany}
(\byear{2010})
\end{bbook}
\endbibitem

%%% 7
\bibitem[\protect\citeauthoryear{Eriksson
  et~al.}{1995}]{ODE_Eriksson_Estep_Hansbo_Johnson_1995}
\begin{barticle}
\bauthor{\bsnm{Eriksson}, \binits{K.}},
\bauthor{\bsnm{Estep}, \binits{D.}},
\bauthor{\bsnm{Hansbo}, \binits{P.}},
\bauthor{\bsnm{Johnson}, \binits{C.}}:
\batitle{Introduction to adaptive methods for differential equations}.
\bjtitle{Acta Numerica}
\bvolume{4},
\bfpage{105}--\blpage{158}
(\byear{1995})
\end{barticle}
\endbibitem

%%% 8
\bibitem[\protect\citeauthoryear{Prudhomme and Oden}{1999}]{Prudhomme1999}
\begin{barticle}
\bauthor{\bsnm{Prudhomme}, \binits{S.}},
\bauthor{\bsnm{Oden}, \binits{J.T.}}:
\batitle{On goal-oriented error estimation for elliptic problems: application
  to the control of pointwise errors}.
\bjtitle{Computer Methods in Applied Mechanics and Engineering}
\bvolume{176},
\bfpage{313}--\blpage{331}
(\byear{1999})
\end{barticle}
\endbibitem

%%% 9
\bibitem[\protect\citeauthoryear{Becker and
  Rannacher}{2001}]{becker2001optimal}
\begin{barticle}
\bauthor{\bsnm{Becker}, \binits{R.}},
\bauthor{\bsnm{Rannacher}, \binits{R.}}:
\batitle{An optimal control approach to a posteriori error estimation in finite
  element methods}.
\bjtitle{Acta numerica}
\bvolume{10},
\bfpage{1}--\blpage{102}
(\byear{2001})
\end{barticle}
\endbibitem

%%% 10
\bibitem[\protect\citeauthoryear{Giles and Süli}{2002}]{Giles2002}
\begin{barticle}
\bauthor{\bsnm{Giles}, \binits{M.B.}},
\bauthor{\bsnm{Süli}, \binits{E.}}:
\batitle{Adjoint methods for pdes: a posteriori error analysis and
  postprocessing by duality}.
\bjtitle{Acta Numerica}
\bvolume{11},
\bfpage{145}--\blpage{236}
(\byear{2002})
\end{barticle}
\endbibitem

%%% 11
\bibitem[\protect\citeauthoryear{Cheng et~al.}{2020}]{cheng2020surrogate}
\begin{barticle}
\bauthor{\bsnm{Cheng}, \binits{K.}},
\bauthor{\bsnm{Lu}, \binits{Z.}},
\bauthor{\bsnm{Ling}, \binits{C.}},
\bauthor{\bsnm{Zhou}, \binits{S.}}:
\batitle{Surrogate-assisted global sensitivity analysis: an overview}.
\bjtitle{Structural and Multidisciplinary Optimization}
\bvolume{61},
\bfpage{1187}--\blpage{1213}
(\byear{2020})
\end{barticle}
\endbibitem

%%% 12
\bibitem[\protect\citeauthoryear{Serban and Hindmarsh}{2005}]{serban2005cvodes}
\begin{bchapter}
\bauthor{\bsnm{Serban}, \binits{R.}},
\bauthor{\bsnm{Hindmarsh}, \binits{A.C.}}:
\bctitle{Cvodes: the sensitivity-enabled ode solver in sundials}.
In: \bbtitle{International Design Engineering Technical Conferences and
  Computers and Information in Engineering Conference},
vol. \bseriesno{47438},
pp. \bfpage{257}--\blpage{269}
(\byear{2005})
\end{bchapter}
\endbibitem

%%% 13
\bibitem[\protect\citeauthoryear{Cao et~al.}{2002}]{CAO2002171}
\begin{barticle}
\bauthor{\bsnm{Cao}, \binits{Y.}},
\bauthor{\bsnm{Li}, \binits{S.}},
\bauthor{\bsnm{Petzold}, \binits{L.}}:
\batitle{Adjoint sensitivity analysis for differential-algebraic equations:
  algorithms and software}.
\bjtitle{Journal of Computational and Applied Mathematics}
\bvolume{149}(\bissue{1}),
\bfpage{171}--\blpage{191}
(\byear{2002}).
\bcomment{Scientific and Engineering Computations for the 21st Century - Me
  thodologies and Applications Proceedings of the 15th Toyota Conference}
\end{barticle}
\endbibitem

%%% 14
\bibitem[\protect\citeauthoryear{Hammond et~al.}{2021}]{hammond2021photonic}
\begin{barticle}
\bauthor{\bsnm{Hammond}, \binits{A.M.}},
\bauthor{\bsnm{Oskooi}, \binits{A.}},
\bauthor{\bsnm{Johnson}, \binits{S.G.}},
\bauthor{\bsnm{Ralph}, \binits{S.E.}}:
\batitle{Photonic topology optimization with semiconductor-foundry design-rule
  constraints}.
\bjtitle{Optics Express}
\bvolume{29}(\bissue{15}),
\bfpage{23916}--\blpage{23938}
(\byear{2021})
\end{barticle}
\endbibitem

%%% 15
\bibitem[\protect\citeauthoryear{Zhang et~al.}{2024}]{zhang2024physics}
\begin{barticle}
\bauthor{\bsnm{Zhang}, \binits{Z.}},
\bauthor{\bsnm{Lin}, \binits{C.}},
\bauthor{\bsnm{Wang}, \binits{B.}}:
\batitle{Physics-informed shape optimization using coordinate projection}.
\bjtitle{Scientific Reports}
\bvolume{14}(\bissue{1}),
\bfpage{6537}
(\byear{2024})
\end{barticle}
\endbibitem

%%% 16
\bibitem[\protect\citeauthoryear{Su et~al.}{2023}]{su2023kinetics}
\begin{barticle}
\bauthor{\bsnm{Su}, \binits{X.}},
\bauthor{\bsnm{Ji}, \binits{W.}},
\bauthor{\bsnm{An}, \binits{J.}},
\bauthor{\bsnm{Ren}, \binits{Z.}},
\bauthor{\bsnm{Deng}, \binits{S.}},
\bauthor{\bsnm{Law}, \binits{C.K.}}:
\batitle{Kinetics parameter optimization of hydrocarbon fuels via neural
  ordinary differential equations}.
\bjtitle{Combustion and Flame}
\bvolume{251},
\bfpage{112732}
(\byear{2023})
\end{barticle}
\endbibitem

%%% 17
\bibitem[\protect\citeauthoryear{Manzoni et~al.}{2021}]{manzoni2021optimal}
\begin{bbook}
\bauthor{\bsnm{Manzoni}, \binits{A.}},
\bauthor{\bsnm{Quarteroni}, \binits{A.}},
\bauthor{\bsnm{Salsa}, \binits{S.}}:
\bbtitle{Optimal Control of Partial Differential Equations}.
\bsertitle{Applied Mathematical Sciences}.
\bpublisher{Springer},
\blocation{Cham, Switzerland}
(\byear{2021})
\end{bbook}
\endbibitem

%%% 18
\bibitem[\protect\citeauthoryear{Gerdts}{2024}]{gerdts2024optimal}
\begin{bbook}
\bauthor{\bsnm{Gerdts}, \binits{M.}}:
\bbtitle{Optimal Control of ODEs and DAEs}.
\bpublisher{De Gruyter Oldenbourg},
\blocation{Berlin, Boston}
(\byear{2024})
\end{bbook}
\endbibitem

%%% 19
\bibitem[\protect\citeauthoryear{Hindmarsh
  et~al.}{2005}]{hindmarsh2005sundials}
\begin{barticle}
\bauthor{\bsnm{Hindmarsh}, \binits{A.C.}},
\bauthor{\bsnm{Brown}, \binits{P.N.}},
\bauthor{\bsnm{Grant}, \binits{K.E.}},
\bauthor{\bsnm{Lee}, \binits{S.L.}},
\bauthor{\bsnm{Serban}, \binits{R.}},
\bauthor{\bsnm{Shumaker}, \binits{D.E.}},
\bauthor{\bsnm{Woodward}, \binits{C.S.}}:
\batitle{Sundials: Suite of nonlinear and differential/algebraic equation
  solvers}.
\bjtitle{ACM Transactions on Mathematical Software (TOMS)}
\bvolume{31}(\bissue{3}),
\bfpage{363}--\blpage{396}
(\byear{2005})
\end{barticle}
\endbibitem

%%% 20
\bibitem[\protect\citeauthoryear{Gardner et~al.}{2022}]{gardner2022enabling}
\begin{barticle}
\bauthor{\bsnm{Gardner}, \binits{D.J.}},
\bauthor{\bsnm{Reynolds}, \binits{D.R.}},
\bauthor{\bsnm{Woodward}, \binits{C.S.}},
\bauthor{\bsnm{Balos}, \binits{C.J.}}:
\batitle{Enabling new flexibility in the sundials suite of nonlinear and
  differential/algebraic equation solvers}.
\bjtitle{ACM Transactions on Mathematical Software (TOMS)}
\bvolume{48}(\bissue{3}),
\bfpage{1}--\blpage{24}
(\byear{2022})
\end{barticle}
\endbibitem

%%% 21
\bibitem[\protect\citeauthoryear{Kharazmi et~al.}{2021}]{kharazmi2021hp}
\begin{barticle}
\bauthor{\bsnm{Kharazmi}, \binits{E.}},
\bauthor{\bsnm{Zhang}, \binits{Z.}},
\bauthor{\bsnm{Karniadakis}, \binits{G.E.}}:
\batitle{hp-vpinns: Variational physics-informed neural networks with domain
  decomposition}.
\bjtitle{Computer Methods in Applied Mechanics and Engineering}
\bvolume{374},
\bfpage{113547}
(\byear{2021})
\end{barticle}
\endbibitem

%%% 22
\bibitem[\protect\citeauthoryear{Haber and Ruthotto}{2017}]{haber2017stable}
\begin{barticle}
\bauthor{\bsnm{Haber}, \binits{E.}},
\bauthor{\bsnm{Ruthotto}, \binits{L.}}:
\batitle{Stable architectures for deep neural networks}.
\bjtitle{Inverse problems}
\bvolume{34}(\bissue{1}),
\bfpage{014004}
(\byear{2017})
\end{barticle}
\endbibitem

%%% 23
\bibitem[\protect\citeauthoryear{Tahersima et~al.}{2019}]{tahersima2019deep}
\begin{barticle}
\bauthor{\bsnm{Tahersima}, \binits{M.H.}},
\bauthor{\bsnm{Kojima}, \binits{K.}},
\bauthor{\bsnm{Koike-Akino}, \binits{T.}},
\bauthor{\bsnm{Jha}, \binits{D.}},
\bauthor{\bsnm{Wang}, \binits{B.}},
\bauthor{\bsnm{Lin}, \binits{C.}},
\bauthor{\bsnm{Parsons}, \binits{K.}}:
\batitle{Deep neural network inverse design of integrated photonic power
  splitters}.
\bjtitle{Scientific reports}
\bvolume{9}(\bissue{1}),
\bfpage{1368}
(\byear{2019})
\end{barticle}
\endbibitem

%%% 24
\bibitem[\protect\citeauthoryear{Mitra and Hlavacek}{2019}]{MITRA20199}
\begin{barticle}
\bauthor{\bsnm{Mitra}, \binits{E.D.}},
\bauthor{\bsnm{Hlavacek}, \binits{W.S.}}:
\batitle{Parameter estimation and uncertainty quantification for systems
  biology models}.
\bjtitle{Current Opinion in Systems Biology}
\bvolume{18},
\bfpage{9}--\blpage{18}
(\byear{2019})
\end{barticle}
\endbibitem

%%% 25
\bibitem[\protect\citeauthoryear{Cao and Petzold}{2004}]{petzolode}
\begin{barticle}
\bauthor{\bsnm{Cao}, \binits{Y.}},
\bauthor{\bsnm{Petzold}, \binits{L.}}:
\batitle{A posteriori error estimation and global error control for ordinary
  differential equations by the adjoint method}.
\bjtitle{SIAM Journal on Scientific Computing}
\bvolume{26}(\bissue{2}),
\bfpage{359}--\blpage{374}
(\byear{2004})
\end{barticle}
\endbibitem

%%% 26
\bibitem[\protect\citeauthoryear{Ainsworth and
  Oden}{1997}]{ainsworth1997posteriori}
\begin{barticle}
\bauthor{\bsnm{Ainsworth}, \binits{M.}},
\bauthor{\bsnm{Oden}, \binits{J.T.}}:
\batitle{A posteriori error estimation in finite element analysis}.
\bjtitle{Computer methods in applied mechanics and engineering}
\bvolume{142}(\bissue{1-2}),
\bfpage{1}--\blpage{88}
(\byear{1997})
\end{barticle}
\endbibitem

%%% 27
\bibitem[\protect\citeauthoryear{Bangerth and
  Rannacher}{2003}]{bangerth2003adaptive}
\begin{bbook}
\bauthor{\bsnm{Bangerth}, \binits{W.}},
\bauthor{\bsnm{Rannacher}, \binits{R.}}:
\bbtitle{Adaptive Finite Element Methods for Differential Equations}.
\bsertitle{Lectures in Mathematics}.
\bpublisher{Springer},
\blocation{Basel, Switzerland}
(\byear{2003})
\end{bbook}
\endbibitem

%%% 28
\bibitem[\protect\citeauthoryear{Barth}{2004}]{barth2004posteriori}
\begin{bchapter}
\bauthor{\bsnm{Barth}, \binits{T.J.}}:
\bctitle{A posteriori error estimation and mesh adaptivity for finite volume
  and finite element methods}.
In: \bbtitle{Adaptive Mesh Refinement-Theory and Applications: Proceedings of
  the Chicago Workshop on Adaptive Mesh Refinement Methods, Sept. 3-5, 2003},
vol. \bseriesno{41},
p. \bfpage{183}
(\byear{2004}).
\bcomment{Springer Science \& Business Media}
\end{bchapter}
\endbibitem

%%% 29
\bibitem[\protect\citeauthoryear{Carey et~al.}{2009}]{carey2009posteriori}
\begin{barticle}
\bauthor{\bsnm{Carey}, \binits{V.}},
\bauthor{\bsnm{Estep}, \binits{D.}},
\bauthor{\bsnm{Tavener}, \binits{S.}}:
\batitle{A posteriori analysis and adaptive error control for multiscale
  operator decomposition solution of elliptic systems i: Triangular systems}.
\bjtitle{SIAM Journal on Numerical Analysis}
\bvolume{47}(\bissue{1}),
\bfpage{740}--\blpage{761}
(\byear{2009})
\end{barticle}
\endbibitem

%%% 30
\bibitem[\protect\citeauthoryear{Chaudhry
  et~al.}{2015}]{chaudhry2015posteriori}
\begin{barticle}
\bauthor{\bsnm{Chaudhry}, \binits{J.H.}},
\bauthor{\bsnm{Estep}, \binits{D.}},
\bauthor{\bsnm{Ginting}, \binits{V.}},
\bauthor{\bsnm{Tavener}, \binits{S.}}:
\batitle{A posteriori analysis for iterative solvers for nonautonomous
  evolution problems}.
\bjtitle{SIAM/ASA Journal on Uncertainty Quantification}
\bvolume{3}(\bissue{1}),
\bfpage{434}--\blpage{459}
(\byear{2015})
\end{barticle}
\endbibitem

%%% 31
\bibitem[\protect\citeauthoryear{Chaudhry
  et~al.}{2021}]{chaudhry2021posteriori}
\begin{barticle}
\bauthor{\bsnm{Chaudhry}, \binits{J.H.}},
\bauthor{\bsnm{Estep}, \binits{D.}},
\bauthor{\bsnm{Tavener}, \binits{S.J.}}:
\batitle{A posteriori error analysis for schwarz overlapping domain
  decomposition methods}.
\bjtitle{BIT Numerical Mathematics}
\bvolume{61}(\bissue{4}),
\bfpage{1153}--\blpage{1191}
(\byear{2021})
\end{barticle}
\endbibitem

%%% 32
\bibitem[\protect\citeauthoryear{Chaudhry}{2018}]{chaudhry2018posteriori}
\begin{barticle}
\bauthor{\bsnm{Chaudhry}, \binits{J.H.}}:
\batitle{A posteriori analysis and efficient refinement strategies for the
  poisson--boltzmann equation}.
\bjtitle{SIAM Journal on Scientific Computing}
\bvolume{40}(\bissue{4}),
\bfpage{2519}--\blpage{2542}
(\byear{2018})
\end{barticle}
\endbibitem

%%% 33
\bibitem[\protect\citeauthoryear{Chaudhry
  et~al.}{2017}]{chaudhry2017posteriori}
\begin{barticle}
\bauthor{\bsnm{Chaudhry}, \binits{J.H.}},
\bauthor{\bsnm{Collins}, \binits{J.}},
\bauthor{\bsnm{Shadid}, \binits{J.N.}}:
\batitle{A posteriori error estimation for multi-stage runge--kutta imex
  schemes}.
\bjtitle{Applied Numerical Mathematics}
\bvolume{117},
\bfpage{36}--\blpage{49}
(\byear{2017})
\end{barticle}
\endbibitem

%%% 34
\bibitem[\protect\citeauthoryear{Chaudhry
  et~al.}{2015}]{chaudhry2015posterioriIMEX}
\begin{barticle}
\bauthor{\bsnm{Chaudhry}, \binits{J.H.}},
\bauthor{\bsnm{Estep}, \binits{D.}},
\bauthor{\bsnm{Ginting}, \binits{V.}},
\bauthor{\bsnm{Shadid}, \binits{J.N.}},
\bauthor{\bsnm{Tavener}, \binits{S.}}:
\batitle{A posteriori error analysis of imex multi-step time integration
  methods for advection--diffusion--reaction equations}.
\bjtitle{Computer Methods in Applied Mechanics and Engineering}
\bvolume{285},
\bfpage{730}--\blpage{751}
(\byear{2015})
\end{barticle}
\endbibitem

%%% 35
\bibitem[\protect\citeauthoryear{Chaudhry
  et~al.}{2016}]{chaudhry2016posteriori}
\begin{barticle}
\bauthor{\bsnm{Chaudhry}, \binits{J.H.}},
\bauthor{\bsnm{Estep}, \binits{D.}},
\bauthor{\bsnm{Tavener}, \binits{S.}},
\bauthor{\bsnm{Carey}, \binits{V.}},
\bauthor{\bsnm{Sandelin}, \binits{J.}}:
\batitle{A posteriori error analysis of two-stage computation methods with
  application to efficient discretization and the parareal algorithm}.
\bjtitle{SIAM journal on numerical analysis}
\bvolume{54}(\bissue{5}),
\bfpage{2974}--\blpage{3002}
(\byear{2016})
\end{barticle}
\endbibitem

%%% 36
\bibitem[\protect\citeauthoryear{Chaudhry
  et~al.}{2013}]{chaudhry2013posteriori}
\begin{barticle}
\bauthor{\bsnm{Chaudhry}, \binits{J.H.}},
\bauthor{\bsnm{Estep}, \binits{D.}},
\bauthor{\bsnm{Ginting}, \binits{V.}},
\bauthor{\bsnm{Tavener}, \binits{S.}}:
\batitle{A posteriori analysis of an iterative multi-discretization method for
  reaction--diffusion systems}.
\bjtitle{Computer Methods in Applied Mechanics and Engineering}
\bvolume{267},
\bfpage{1}--\blpage{22}
(\byear{2013})
\end{barticle}
\endbibitem

%%% 37
\bibitem[\protect\citeauthoryear{Chaudhry
  et~al.}{2019}]{chaudhry2019posteriori}
\begin{barticle}
\bauthor{\bsnm{Chaudhry}, \binits{J.H.}},
\bauthor{\bsnm{Shadid}, \binits{J.N.}},
\bauthor{\bsnm{Wildey}, \binits{T.}}:
\batitle{A posteriori analysis of an imex entropy-viscosity formulation for
  hyperbolic conservation laws with dissipation}.
\bjtitle{Applied Numerical Mathematics}
\bvolume{135},
\bfpage{129}--\blpage{142}
(\byear{2019})
\end{barticle}
\endbibitem

%%% 38
\bibitem[\protect\citeauthoryear{Collins et~al.}{2015}]{collins2015posteriori}
\begin{barticle}
\bauthor{\bsnm{Collins}, \binits{J.}},
\bauthor{\bsnm{Estep}, \binits{D.}},
\bauthor{\bsnm{Tavener}, \binits{S.}}:
\batitle{A posteriori error analysis for finite element methods with projection
  operators as applied to explicit time integration techniques}.
\bjtitle{BIT Numerical Mathematics}
\bvolume{55}(\bissue{4}),
\bfpage{1017}--\blpage{1042}
(\byear{2015})
\end{barticle}
\endbibitem

%%% 39
\bibitem[\protect\citeauthoryear{Collins et~al.}{2014}]{collins2014posteriori}
\begin{barticle}
\bauthor{\bsnm{Collins}, \binits{J.B.}},
\bauthor{\bsnm{Estep}, \binits{D.}},
\bauthor{\bsnm{Tavener}, \binits{S.}}:
\batitle{A posteriori error estimation for the lax--wendroff finite difference
  scheme}.
\bjtitle{Journal of computational and applied mathematics}
\bvolume{263},
\bfpage{299}--\blpage{311}
(\byear{2014})
\end{barticle}
\endbibitem

%%% 40
\bibitem[\protect\citeauthoryear{Estep}{1995}]{estep1995posteriori}
\begin{barticle}
\bauthor{\bsnm{Estep}, \binits{D.}}:
\batitle{A posteriori error bounds and global error control for approximation
  of ordinary differential equations}.
\bjtitle{SIAM Journal on Numerical Analysis}
\bvolume{32}(\bissue{1}),
\bfpage{1}--\blpage{48}
(\byear{1995})
\end{barticle}
\endbibitem

%%% 41
\bibitem[\protect\citeauthoryear{Estep}{2009}]{estep2009error}
\begin{botherref}
\oauthor{\bsnm{Estep}, \binits{D.}}:
Error estimates for multiscale operator decomposition for multiphysics models.
Multiscale methods: bridging the scales in science and engineering,
305--390
(2009)
\end{botherref}
\endbibitem

%%% 42
\bibitem[\protect\citeauthoryear{Estep et~al.}{2012}]{estep2012posteriori}
\begin{barticle}
\bauthor{\bsnm{Estep}, \binits{D.}},
\bauthor{\bsnm{Ginting}, \binits{V.}},
\bauthor{\bsnm{Tavener}, \binits{S.}}:
\batitle{A posteriori analysis of a multirate numerical method for ordinary
  differential equations}.
\bjtitle{Computer Methods in Applied Mechanics and Engineering}
\bvolume{223},
\bfpage{10}--\blpage{27}
(\byear{2012})
\end{barticle}
\endbibitem

%%% 43
\bibitem[\protect\citeauthoryear{Estep et~al.}{2002}]{estep2002accounting}
\begin{barticle}
\bauthor{\bsnm{Estep}, \binits{D.}},
\bauthor{\bsnm{Holst}, \binits{M.}},
\bauthor{\bsnm{Mikulencak}, \binits{D.}}:
\batitle{Accounting for stability: a posteriori error estimates based on
  residuals and variational analysis}.
\bjtitle{Communications in Numerical Methods in Engineering}
\bvolume{18}(\bissue{1}),
\bfpage{15}--\blpage{30}
(\byear{2002})
\end{barticle}
\endbibitem

%%% 44
\bibitem[\protect\citeauthoryear{Estep et~al.}{2000}]{estep2000estimating}
\begin{bbook}
\bauthor{\bsnm{Estep}, \binits{D.J.}},
\bauthor{\bsnm{Larson}, \binits{M.G.}},
\bauthor{\bsnm{Williams}, \binits{R.D.}}:
\bbtitle{Estimating the Error of Numerical Solutions of Systems of
  Reaction-diffusion Equations}
vol. \bseriesno{696}.
\bpublisher{American Mathematical Soc.},
\blocation{Providence, RI}
(\byear{2000})
\end{bbook}
\endbibitem

%%% 45
\bibitem[\protect\citeauthoryear{Johansson
  et~al.}{2015}]{johansson2015adaptive}
\begin{barticle}
\bauthor{\bsnm{Johansson}, \binits{A.}},
\bauthor{\bsnm{Chaudhry}, \binits{J.}},
\bauthor{\bsnm{Carey}, \binits{V.}},
\bauthor{\bsnm{Estep}, \binits{D.}},
\bauthor{\bsnm{Ginting}, \binits{V.}},
\bauthor{\bsnm{Larson}, \binits{M.}},
\bauthor{\bsnm{Tavener}, \binits{S.}}:
\batitle{Adaptive finite element solution of multiscale pde--ode systems}.
\bjtitle{Computer Methods in Applied Mechanics and Engineering}
\bvolume{287},
\bfpage{150}--\blpage{171}
(\byear{2015})
\end{barticle}
\endbibitem

%%% 46
\bibitem[\protect\citeauthoryear{Chaudhry et~al.}{2021}]{chaudhry2021error}
\begin{barticle}
\bauthor{\bsnm{Chaudhry}, \binits{J.H.}},
\bauthor{\bsnm{Estep}, \binits{D.}},
\bauthor{\bsnm{Stevens}, \binits{Z.}},
\bauthor{\bsnm{Tavener}, \binits{S.J.}}:
\batitle{Error estimation and uncertainty quantification for first time to a
  threshold value}.
\bjtitle{BIT Numerical Mathematics}
\bvolume{61},
\bfpage{275}--\blpage{307}
(\byear{2021})
\end{barticle}
\endbibitem

%%% 47
\bibitem[\protect\citeauthoryear{Auzinger
  et~al.}{2007}]{defectauzinger2007defect}
\begin{botherref}
\oauthor{\bsnm{Auzinger}, \binits{W.}},
\oauthor{\bsnm{Lehner}, \binits{H.}},
\oauthor{\bsnm{Weinm{\"u}ller}, \binits{E.}}:
Defect-based a posteriori error estimation for index-1 daes.
Submitted to Numer. Math
(2007)
\end{botherref}
\endbibitem

%%% 48
\bibitem[\protect\citeauthoryear{Auzinger
  et~al.}{2009}]{defectauzinger2009defect}
\begin{barticle}
\bauthor{\bsnm{Auzinger}, \binits{W.}},
\bauthor{\bsnm{Lehner}, \binits{H.}},
\bauthor{\bsnm{Weinmüller}, \binits{E.}}:
\batitle{Defect‐based a posteriori error estimation for index 1 daes with a
  singularity of the first kind}.
\bjtitle{AIP Conference Proceedings}
\bvolume{1168},
\bfpage{1009}--\blpage{1012}
(\byear{2009})
\end{barticle}
\endbibitem

%%% 49
\bibitem[\protect\citeauthoryear{Sieber}{2005}]{errorlocalcontrolsieber2005local}
\begin{bbook}
\bauthor{\bsnm{Sieber}, \binits{J.}}:
\bbtitle{Local Error Control for General Index-1 and Index-2
  Differential-algebraic Equations}.
\bpublisher{Humboldt-Universität zu Berlin,
  Mathematisch-Naturwissenschaftliche Fakultät},
\blocation{Berlin, Germany}
(\byear{2005})
\end{bbook}
\endbibitem

%%% 50
\bibitem[\protect\citeauthoryear{Hindmarsh and
  Petzold}{1995}]{errorpetzolhindmarsh1995algorithms}
\begin{barticle}
\bauthor{\bsnm{Hindmarsh}, \binits{A.C.}},
\bauthor{\bsnm{Petzold}, \binits{L.R.}}:
\batitle{Algorithms and software for ordinary differential equations and
  differential-algebraic equations, part i: Euler methods and error
  estimation}.
\bjtitle{Computers in Physics}
\bvolume{9}(\bissue{1}),
\bfpage{34}
(\byear{1995})
\end{barticle}
\endbibitem

%%% 51
\bibitem[\protect\citeauthoryear{Hairer et~al.}{1988}]{errorRKhairer1988error}
\begin{barticle}
\bauthor{\bsnm{Hairer}, \binits{E.}},
\bauthor{\bsnm{Lubich}, \binits{C.}},
\bauthor{\bsnm{Roche}, \binits{M.}}:
\batitle{Error of runge-kutta methods for stiff problems studied via
  differential algebraic equations}.
\bjtitle{BIT Numerical Mathematics}
\bvolume{28},
\bfpage{678}--\blpage{700}
(\byear{1988})
\end{barticle}
\endbibitem

%%% 52
\bibitem[\protect\citeauthoryear{Boscarino}{2007}]{errorIMEXboscarino2007error}
\begin{barticle}
\bauthor{\bsnm{Boscarino}, \binits{S.}}:
\batitle{Error analysis of imex runge--kutta methods derived from
  differential-algebraic systems}.
\bjtitle{SIAM Journal on Numerical Analysis}
\bvolume{45}(\bissue{4}),
\bfpage{1600}--\blpage{1621}
(\byear{2007})
\end{barticle}
\endbibitem

%%% 53
\bibitem[\protect\citeauthoryear{Linh et~al.}{2011}]{errorQRlinh2011qr}
\begin{barticle}
\bauthor{\bsnm{Linh}, \binits{V.H.}},
\bauthor{\bsnm{Mehrmann}, \binits{V.}},
\bauthor{\bsnm{Van~Vleck}, \binits{E.S.}}:
\batitle{Qr methods and error analysis for computing lyapunov and sacker--sell
  spectral intervals for linear differential-algebraic equations}.
\bjtitle{Advances in Computational Mathematics}
\bvolume{35}(\bissue{2}),
\bfpage{281}--\blpage{322}
(\byear{2011})
\end{barticle}
\endbibitem

%%% 54
\bibitem[\protect\citeauthoryear{Arnold
  et~al.}{1995}]{errornsdaeindex2arnold1995errors}
\begin{bbook}
\bauthor{\bsnm{Arnold}, \binits{M.}},
\bauthor{\bsnm{Strehmel}, \binits{K.}},
\bauthor{\bsnm{Weiner}, \binits{R.}}:
\bbtitle{Errors in the Numerical Solution of Nonlinear Differential-algebraic
  Systems of Index 2}.
\bpublisher{Martin-Luther-Universit{\"a}t Halle-Wittenberg, Fachbereich
  Mathematik},
\blocation{Halle, Germany}
(\byear{1995})
\end{bbook}
\endbibitem

%%% 55
\bibitem[\protect\citeauthoryear{Leimkuhler}{1986}]{errorDAEOSTIGOVosti_5678419}
\begin{botherref}
\oauthor{\bsnm{Leimkuhler}, \binits{B.J.}}:
Error estimates for differential-algebraic equations.
Technical report,
Illinois Univ., Urbana (USA). Dept. of Computer Science
(June 1986)
\end{botherref}
\endbibitem

%%% 56
\bibitem[\protect\citeauthoryear{Karas{\"o}zen and
  Somali}{1996}]{erroriterateddefectkarasozen1996error}
\begin{barticle}
\bauthor{\bsnm{Karas{\"o}zen}, \binits{B.}},
\bauthor{\bsnm{Somali}, \binits{{\c{S}}.}}:
\batitle{An error analysis of iterated defect correction methods for linear
  differential-algebraic equations}.
\bjtitle{International journal of computer mathematics}
\bvolume{60}(\bissue{1-2}),
\bfpage{121}--\blpage{137}
(\byear{1996})
\end{barticle}
\endbibitem

%%% 57
\bibitem[\protect\citeauthoryear{M{\"a}rz}{2000}]{marz2000unified}
\begin{botherref}
\oauthor{\bsnm{M{\"a}rz}, \binits{R.}}:
An unified approach to linear differential algebraic equations and their
  adjoint equations.
{Institute of Mathematics Technical Report, Humboldt University, Berlin}
(2000)
\end{botherref}
\endbibitem

%%% 58
\bibitem[\protect\citeauthoryear{Balla and M{\"a}rz}{2000}]{balla2000linear}
\begin{barticle}
\bauthor{\bsnm{Balla}, \binits{K.}},
\bauthor{\bsnm{M{\"a}rz}, \binits{R.}}:
\batitle{Linear differential algebraic equations of index 1 and their adjoint
  equations}.
\bjtitle{Results in Mathematics}
\bvolume{37}(\bissue{1}),
\bfpage{13}--\blpage{35}
(\byear{2000})
\end{barticle}
\endbibitem

%%% 59
\bibitem[\protect\citeauthoryear{Atkinson et~al.}{2009}]{Atkinson}
\begin{bbook}
\bauthor{\bsnm{Atkinson}, \binits{K.E.}},
\bauthor{\bsnm{Han}, \binits{W.}},
\bauthor{\bsnm{Stewart}, \binits{D.E.}}:
\bbtitle{Numerical Solution of Ordinary Differential Equations}.
\bpublisher{John Wiley \& Sons},
\blocation{Hoboken, NJ}
(\byear{2009})
\end{bbook}
\endbibitem

%%% 60
\bibitem[\protect\citeauthoryear{Gear}{1971}]{Gear:1970pw}
\begin{barticle}
\bauthor{\bsnm{Gear}, \binits{C.W.}}:
\batitle{{THE SIMULTANEOUS NUMERICAL SOLUTION OF DIFFERENTIAL - ALGEBRAIC
  EQUATIONS}}.
\bjtitle{IEEE Trans. Circuits Theor.}
\bvolume{18},
\bfpage{85}--\blpage{95}
(\byear{1971})
\end{barticle}
\endbibitem

%%% 61
\bibitem[\protect\citeauthoryear{Petzold}{1982}]{DASSL}
\begin{bchapter}
\bauthor{\bsnm{Petzold}, \binits{L.R.}}:
\bctitle{Description of DASSL: a Differential/algebraic System Solver}.
(\byear{1982}).
\bcomment{Sandia National Labs., Livermore, CA (USA)}
\end{bchapter}
\endbibitem

%%% 62
\bibitem[\protect\citeauthoryear{Ascher and
  Petzold}{1998}]{computermethodpetzold}
\begin{bbook}
\bauthor{\bsnm{Ascher}, \binits{U.M.}},
\bauthor{\bsnm{Petzold}, \binits{L.R.}}:
\bbtitle{Computer Methods for Ordinary Differential Equations and
  Differential-Algebraic Equations}.
\bpublisher{Society for Industrial and Applied Mathematics},
\blocation{Philadelphia, PA}
(\byear{1998})
\end{bbook}
\endbibitem

%%% 63
\bibitem[\protect\citeauthoryear{Virtanen et~al.}{2020}]{SciPy-NMeth}
\begin{barticle}
\bauthor{\bsnm{Virtanen}, \binits{P.}},
\bauthor{\bsnm{Gommers}, \binits{R.}},
\bauthor{\bsnm{Oliphant}, \binits{T.E.}},
\bauthor{\bsnm{Haberland}, \binits{M.}},
\bauthor{\bsnm{Reddy}, \binits{T.}},
\bauthor{\bsnm{Cournapeau}, \binits{D.}},
\bauthor{\bsnm{Burovski}, \binits{E.}},
\bauthor{\bsnm{Peterson}, \binits{P.}},
\bauthor{\bsnm{Weckesser}, \binits{W.}},
\bauthor{\bsnm{Bright}, \binits{J.}},
\bauthor{\bsnm{{van der Walt}}, \binits{S.J.}},
\bauthor{\bsnm{Brett}, \binits{M.}},
\bauthor{\bsnm{Wilson}, \binits{J.}},
\bauthor{\bsnm{Millman}, \binits{K.J.}},
\bauthor{\bsnm{Mayorov}, \binits{N.}},
\bauthor{\bsnm{Nelson}, \binits{A.R.J.}},
\bauthor{\bsnm{Jones}, \binits{E.}},
\bauthor{\bsnm{Kern}, \binits{R.}},
\bauthor{\bsnm{Larson}, \binits{E.}},
\bauthor{\bsnm{Carey}, \binits{C.J.}},
\bauthor{\bsnm{Polat}, \binits{{\. I}.}},
\bauthor{\bsnm{Feng}, \binits{Y.}},
\bauthor{\bsnm{Moore}, \binits{E.W.}},
\bauthor{\bsnm{{VanderPlas}}, \binits{J.}},
\bauthor{\bsnm{Laxalde}, \binits{D.}},
\bauthor{\bsnm{Perktold}, \binits{J.}},
\bauthor{\bsnm{Cimrman}, \binits{R.}},
\bauthor{\bsnm{Henriksen}, \binits{I.}},
\bauthor{\bsnm{Quintero}, \binits{E.A.}},
\bauthor{\bsnm{Harris}, \binits{C.R.}},
\bauthor{\bsnm{Archibald}, \binits{A.M.}},
\bauthor{\bsnm{Ribeiro}, \binits{A.H.}},
\bauthor{\bsnm{Pedregosa}, \binits{F.}},
\bauthor{\bsnm{{van Mulbregt}}, \binits{P.}},
\bauthor{\bsnm{{SciPy 1.0 Contributors}}}:
\batitle{{{SciPy} 1.0: Fundamental Algorithms for Scientific Computing in
  Python}}.
\bjtitle{Nature Methods}
\bvolume{17},
\bfpage{261}--\blpage{272}
(\byear{2020})
\end{barticle}
\endbibitem

%%% 64
\bibitem[\protect\citeauthoryear{Shampine et~al.}{2005}]{robertsonexampletaken}
\begin{barticle}
\bauthor{\bsnm{Shampine}, \binits{L.F.}},
\bauthor{\bsnm{Thompson}, \binits{S.}},
\bauthor{\bsnm{Kierzenka}, \binits{J.A.}},
\bauthor{\bsnm{Byrne}, \binits{G.D.}}:
\batitle{Non-negative solutions of odes}.
\bjtitle{Applied Mathematics and Computation}
\bvolume{170}(\bissue{1}),
\bfpage{556}--\blpage{569}
(\byear{2005})
\end{barticle}
\endbibitem

%%% 65
\bibitem[\protect\citeauthoryear{Safdarnejad
  et~al.}{2015}]{pendulumexampletaken}
\begin{barticle}
\bauthor{\bsnm{Safdarnejad}, \binits{S.M.}},
\bauthor{\bsnm{Hedengren}, \binits{J.D.}},
\bauthor{\bsnm{Lewis}, \binits{N.R.}},
\bauthor{\bsnm{Haseltine}, \binits{E.L.}}:
\batitle{Initialization strategies for optimization of dynamic systems}.
\bjtitle{Computers \& Chemical Engineering}
\bvolume{78},
\bfpage{39}--\blpage{50}
(\byear{2015})
\end{barticle}
\endbibitem

%%% 66
\bibitem[\protect\citeauthoryear{Sircar et~al.}{2013}]{NernstPlanck}
\begin{barticle}
\bauthor{\bsnm{Sircar}, \binits{S.}},
\bauthor{\bsnm{Keener}, \binits{J.P.}},
\bauthor{\bsnm{Fogelson}, \binits{A.L.}}:
\batitle{The effect of divalent vs. monovalent ions on the swelling of
  mucin-like polyelectrolyte gels: Governing equations and equilibrium
  analysis}.
\bjtitle{The Journal of Chemical Physics}
\bvolume{138}(\bissue{1}),
\bfpage{014901}
(\byear{2013})
\end{barticle}
\endbibitem

%%% 67
\bibitem[\protect\citeauthoryear{McKee et~al.}{2008}]{mckee2008:kjd}
\begin{barticle}
\bauthor{\bsnm{McKee}, \binits{S.}},
\bauthor{\bsnm{Tomé}, \binits{M.F.}},
\bauthor{\bsnm{Ferreira}, \binits{V.G.}},
\bauthor{\bsnm{Cuminato}, \binits{J.A.}},
\bauthor{\bsnm{Castelo}, \binits{A.}},
\bauthor{\bsnm{Sousa}, \binits{F.S.}},
\bauthor{\bsnm{Mangiavacchi}, \binits{N.}}:
\batitle{The mac method}.
\bjtitle{Computers \& Fluids}
\bvolume{37}(\bissue{8}),
\bfpage{907}--\blpage{930}
(\byear{2008})
\end{barticle}
\endbibitem

%%% 68
\bibitem[\protect\citeauthoryear{Chaudhry and Collins}{2021}]{Chaudhry2021}
\begin{barticle}
\bauthor{\bsnm{Chaudhry}, \binits{J.H.}},
\bauthor{\bsnm{Collins}, \binits{J.B.}}:
\batitle{A posteriori error estimation for the spectral deferred correction
  method}.
\bjtitle{Journal of Computational and Applied Mathematics}
\bvolume{382},
\bfpage{113097}
(\byear{2021})
\end{barticle}
\endbibitem

\end{thebibliography}

\end{document}